\newtheorem{theorem}{Theorem}[section]
\newtheorem{lemma}[theorem]{Lemma}
\newtheorem{prop}[theorem]{Proposition}
\newtheorem{cor}[theorem]{Corollary}
\theoremstyle{remark}
\newtheorem{remark}[theorem]{Remark}
\def\N{{\mathbb N}}
\def\C{{\mathbb C}}
\def\R{{\mathbb R}}
\def\TT{{\mathbb T}}
\def\Z{{\mathbb Z}}
\def\A{{\mathcal{A}}}
\def\B{{\mathcal{B}}}
\def\K{{\mathcal{K}}}
\def\T{{\mathcal{T}}}
\def\I{{\mathcal{I}}}
\def\J{{\mathcal{J}}}
\def\L{{\mathcal{L}}}
\def\M{{\mathcal{M}}}
\newcommand{\clsp}{\overline{\operatorname{span}}}
\newcommand{\lsp}{\operatorname{span}}
\newcommand{\lt}{\operatorname{lt}}
\newcommand{\id}{\operatorname{id}}
\newcommand{\piso}{\operatorname{piso}}
\newcommand{\iso}{\operatorname{iso}}
\newcommand{\End}{\operatorname{End}}
\newcommand{\Aut}{\operatorname{Aut}}
\newcommand{\whitesquare}{\hfill $\whitesquare$\newline\vspace{0.4cm}}
\numberwithin{equation}{section}
\begin{document}

\title[the partial-isometric crossed product by the semigroup $\N^{2}$]
{The composition series of ideals of the partial-isometric crossed product by the semigroup $\N^{2}$}

\author[Saeid Zahmatkesh]{Saeid Zahmatkesh}
\address{Mathematics and Statistics with Applications (MaSA), Department of Mathematics, Faculty of Science, King Mongkut's University of Technology Thonburi, Bangkok 10140, THAILAND}
\email{saeid.zk09@gmail.com, saeid.kom@kmutt.ac.th}



\subjclass[2020]{Primary 46L55}
\keywords{$C^*$-algebra, endomorphism, semigroup, partial-isometry, crossed product}

\begin{abstract}
Suppose that $\alpha$ is an action of the semigroup $\mathbb{N}^{2}$ on a $C^*$-algebra $A$ by endomorphisms. Let
$A\times_{\alpha}^{\textrm{piso}} \mathbb{N}^{2}$ be the associated partial-isometric crossed product. By applying an earlier result
which embeds this semigroup crossed product (as a full corner) in a crossed product by the group $\mathbb{Z}^{2}$, a composition series
$0\leq L_{1}\leq L_{2}\leq A\times_{\alpha}^{\piso} \N^{2}$ of essential ideals is obtained for which we identify the subquotients with
familiar algebras.
\end{abstract}
\maketitle

\section{Introduction}
\label{intro}
It is shown in \cite{SZ2} that the partial-isometric crossed products (Nica-Toeplitz crossed products) by positive cones of abelian lattice-ordered groups are full corners in usual crossed products by groups. This actually generalizes the earlier result in \cite{SZ}, where the case of
abelian totally ordered groups is treated. Now, in the present work, we consider the dynamical system $(A,\N^{2},\alpha)$,
where $\N^{2}$ denotes the positive cone of the abelian lattice-ordered group $\Z^{2}$, and $\alpha$ is an action of $\mathbb{N}^{2}$ on a $C^*$-algebra $A$ by endomorphisms. We would like to recall that we suppose that each endomorphism $\alpha_{t}$ of $A$ extends to
a strictly continuous endomorphism $\overline{\alpha}_{t}$ of the multiplier algebra $\M(A)$ as we deal with non-unital
$C^*$-algebras (see \cite[\S 1]{SZ2} or \cite[\S 1]{SZ3}). So, by \cite[Theorem 4.1]{SZ2}, the partial-isometric crossed product
$A\times_{\alpha}^{\piso} \N^{2}$ of the system $(A,\N^{2},\alpha)$ is (isomorphic to) a full corner of a crossed product by the group
$\Z^{2}$. We apply this corner realization to obtain a composition series of essential ideals
$0\leq L_{1}\leq L_{2}\leq A\times_{\alpha}^{\piso} \N^{2}$ and identify the subquotients with familiar algebras.
In addition, when the action $\alpha$ on $A$ is given by automorphisms, we have simple identifications for the subquotients.
Overall, we think that the present work contains useful information in order to understand the (ideal) structure of the semigroup
crossed product $A\times_{\alpha}^{\piso} \N^{2}$.

Before we proceed, first, readers should be informed that the present work is essentially a revised version
of section 5 of the earlier versions of \cite{SZ2} which are available in the pre-print server arXiv
(see https://arxiv.org/abs/1912.09682v1 and https://arxiv.org/abs/1912.09682v2). Since these versions were too long, a third version
of \cite{SZ2} was then prepared by removing the section 5 and published separately (see https://arxiv.org/abs/1912.09682v3). 
Therefore, the present work is indeed an application of the main theorem in \cite{SZ2} to the system $(A,\N^{2},\alpha)$. We hope
that this clarifies the overlap of the present work with the earlier versions of \cite{SZ2} in arXiv. In addition, to see more on the
theory of partial-isometric crossed products, readers may refer to \cite{Fowler,LR,Adji-Abbas,AZ,AZ2,LZ,SZ3}. In particular, in
\cite{SZ3} partial-isometric crossed products are studied for more general semigroups, namely, (left) LCM semigroups.
However, as a preliminary background for the present work, \cite[\S 2]{SZ2} should be enough for readers to see a quick recall
on partial-isometric and isometric crossed products.

Here is the organization of the present work. It starts with a preliminary section in which the results in \cite{SZ2}
are briefly recalled for the system $(A,\N^{2},\alpha)$. In section \ref{sec:composition}, we show that for the
partial-isometric crossed product $A\times_{\alpha}^{\piso} \N^{2}$ of the system, there is a composition series
$$0\leq L_{1}\leq L_{2}\leq A\times_{\alpha}^{\piso} \N^{2}$$
of essential ideals, and then identify the subquotients with familiar algebras. To do so, we apply the fact that the algebra
$A\times_{\alpha}^{\piso} \N^{2}$ is a full corner in a crossed product by the group $\Z^{2}$ to import the information.
The (essential) ideal $L_{2}$ is the kernel of the natural surjective homomorphism of
$A\times_{\alpha}^{\piso} \N^{2}$ onto the isometric crossed product $A\times_{\alpha}^{\iso} \N^{2}$ of the system.
We show that it is the sum of two essential ideals $\I_{1}$ and $\I_{2}$ (corresponding to two generators of the group $\Z^{2}$), and hence,
the composition series of essential ideals mentioned in above is
$$0\leq \I_{1}\cap \I_{2}\leq \I_{1}+ \I_{2}\leq A\times_{\alpha}^{\piso} \N^{2}.$$
Moreover, while clearly $(A\times_{\alpha}^{\piso} \N^{2})/L_{2}\simeq A\times_{\alpha}^{\iso} \N^{2}$, we show that
the ideal $L_{1}$ is a full corner in the algebra $\K(\ell^{2}(\N^{2}))\otimes A$ of compact operators, and
$L_{2}/L_{1}\simeq \A_{1} \oplus \A_{2}$, where each $\A_{i}$ is a full corner in an algebra of compact operators. Therefore, when
the action $\alpha$ on $A$ is given by automorphisms, we simply have
$(A\times_{\alpha}^{\piso} \N^{2})/L_{2}\simeq A\rtimes_{\alpha} \Z^{2}$, $L_{1}\simeq\K(\ell^{2}(\N^{2}))\otimes A$, and
$L_{2}/L_{1}\simeq \big[\K(\ell^{2}(\N)) \otimes (A\rtimes_{\alpha_{1}} \Z)\big]
\oplus \big[\K(\ell^{2}(\N)) \otimes (A\rtimes_{\alpha_{2}} \Z)\big]$, where $\alpha_{1}$ and $\alpha_{2}$ are two automorphisms
corresponding to two generators of the group $\Z^{2}$.

\section{Preliminaries}
\label{sec:pre}

Let $\N^{2}$ be the positive cone of the abelian lattice-ordered group $\Z^{2}$. Note that, here sometimes an element of
$\Z^{2}$ is simply denoted by $s$ instead of $(s_{1},s_{2})$ for convenience, where each $s_{i}$ belongs to $\Z$. Therefore, $0$
denotes the unit element $(0,0)$ of $\Z^{2}$. Also, we use the additive notation ``$+$" for the action of the group $\Z^{2}$,
and hence, $-s$ denotes the inverse of an element $s$.

Let $(A,\N^{2},\alpha)$ be a dynamical system consisting of a $C^*$-algebra $A$, and an action $\alpha:\N^{2}\rightarrow \End (A)$
of $\N^{2}$ on $A$ by extendible endomorphisms such that $\alpha_{0}=\id$.
Suppose that $(A\times_{\alpha}^{\piso} \N^{2},i_{A},i_{\N^{2}})$ and $(A\times_{\alpha}^{\iso} \N^{2},j_{A},j_{\N^{2}})$ are
the partial-isometric and isometric crossed product of the system, respectively. We recall from \cite{SZ2}
that the pair $(j_{A},j_{\N^{2}})$ induces a surjective homomorphism $q$ of $A\times_{\alpha}^{\piso} \N^{2}$ onto
$A\times_{\alpha}^{\iso} \N^{2}$ such that
$$q(i_{\N^{2}}(m,n)^{*} i_{A}(a) i_{\N^{2}}(s,t))=j_{\N^{2}}(m,n)^{*} j_{A}(a) j_{\N^{2}}(s,t)$$ for all $a\in A$ and $m,n,s,t\in \N$.
Therefore, the following short exact sequence
\begin{align}
\label{exseq1}
0 \longrightarrow \ker q \stackrel{}{\longrightarrow} A\times_{\alpha}^{\piso} \N^{2} \stackrel{q}{\longrightarrow}
A\times_{\alpha}^{\iso} \N^{2} \longrightarrow 0
\end{align}
is obtained, where by \cite[Proposition 2.1]{SZ2}, we have
\begin{align}
\label{ker-q}
\ker q=\clsp\{i_{\N^{2}}(x)^{*}i_{A}(a)(1-i_{\N^{2}}(s)^{*}i_{\N^{2}}(s))i_{\N^{2}}(y): a\in A, x,y,s\in \N^{2}\},
\end{align}
which is an essential ideal of $A\times_{\alpha}^{\piso} \N^{2}$ (see \cite[Proposition 4.3]{SZ2}).
More importantly, the main theorem of \cite{SZ2} says that the algebra $A\times_{\alpha}^{\piso} \N^{2}$ is a full corner in a
crossed product by group. To be more precise, for every $s\in \Z^{2}$, a map $\phi_{s}:A\rightarrow\ell^{\infty}(\Z^{2},A)$ is defined
by
\[
\phi_{s}(a)(t)=
   \begin{cases}
      \alpha_{t-s}(a) &\textrm{if}\empty\ \text{$s\leq t$}\\
      0 &\textrm{otherwise},
   \end{cases}
\]
which is an injective $*$-homomorphism. Note that
\begin{align}
\label{suprem}
\phi_{s}(a)\phi_{t}(b)=\phi_{s\vee t}\big(\alpha_{(s\vee t)-s}(a)\alpha_{(s\vee t)-t}(b)\big)
\end{align}
for all $a,b\in A$ and $s,t\in \Z^{2}$. Now, for the $C^*$-subalgebra $\B$ of $\ell^{\infty}(\Z^{2},A)$ generated by
$\{\phi_{s}(a):s\in \Z^{2}, a\in A\}$, we have
$$\B=\clsp\{\phi_{s}(a):s\in \Z^{2}, a\in A\}.$$
Also, each homomorphism $\phi_{s}:A\rightarrow\mathcal{B}$ extends to a strictly continuous homomorphism
$\overline{\phi}_{s}:\M(A)\rightarrow\M(\B)$ of multiplier algebras (see \cite[Lemma 3.2]{SZ2}), such that
\begin{align}
\label{suprem2}
\overline{\phi}_{s}(m)\overline{\phi}_{t}(n)=
\overline{\phi}_{s\vee t}\big(\overline{\alpha}_{(s\vee t)-s}(m)\overline{\alpha}_{(s\vee t)-t}(n)\big)
\end{align}
for all $s,t\in \Z^{2}$ and $m,n\in \M(A)$. Moreover, \cite[Proposition 3.3]{SZ2} shows that the algebra $\B$ contains an
essential ideal $\J$, such that
\begin{align}
\label{J-span}
\J=\clsp\{\phi_{s}(a)-\phi_{t}(\alpha_{t-s}(a)): s\leq t\in \Z^{2}, a\in A\}.
\end{align}
Next, the shift on $\ell^{\infty}(\Z^{2},A)$ induces an action $\beta$ of $\Z^{2}$ on $\B$ by automorphisms such that
$\beta_{t}\circ\phi_{s}=\phi_{t+s}$ for all $s,t\in \Z^{2}$. So, a group dynamical system $(\B, \Z^{2},\beta)$ is obtained.
If $(\B\rtimes_{\beta} \Z^{2},j_{\B},j_{\Z^{2}})$ is the group crossed product of the system, since $\J$ is a $\beta$-invariant
essential ideal of $\B$, $\J\rtimes_{\beta} \Z^{2}$ sits in $\B\rtimes_{\beta} \Z^{2}$ as an essential ideal (see \cite[Proposition 2.4]{Kusuda}).
Also, recall that, if the maps $\rho:\B\rightarrow \L(\ell^{2}(\Z^{2})\otimes A)$ and $U:\Z^{2} \rightarrow \L(\ell^{2}(\Z^{2})\otimes A)$
are defined by $(\rho(\xi)f)(s)=\xi(s)f(s)$ and $(U_{t}f)(s)=f(s-t)$, respectively, where $\xi\in\B$ and $f\in\ell^{2}(\Z^{2})\otimes A$, then
$\rho$ is a nondegenerate representation and $U$ is a unitary representation such that $\rho(\beta_{t}(\xi))=U_{t}\rho(\xi)U_{t}^{*}$. Therefore,
the pair $(\rho,U)$ is a covariant representation of $(\B,\Z^{2},\beta)$ on $\ell^{2}(\Z^{2})\otimes A\simeq \ell^{2}(\Z^{2},A)$.
Now, if $p=\overline{j_{\B}\circ\phi_{(0,0)}}(1)$, then by \cite[Theorem 4.1]{SZ2}, there is an isomorphism $\Psi$ of
$(A\times_{\alpha}^{\piso} \N^{2},i_{A},i_{\N^{2}})$ onto the full corner $p(\B\rtimes_{\beta} \Z^{2})p$ of the group crossed product
$(\B\rtimes_{\beta} \Z^{2},j_{\B},j_{\Z^{2}})$, such that
\begin{align}
\label{psi-1}
\Psi\big(i_{\N^{2}}(m,n)^{*} i_{A}(a) i_{\N^{2}}(s,t)\big)=pj_{\Z^{2}}(m,n) (j_{\B}\circ\phi_{(0,0)})(a) j_{\Z^{2}}(s,t)^{*}p
\end{align}
for all $m,n,s,t\in\N$ and $a\in A$. Also, by \cite[Lemma 4.2]{SZ2}, the ideal $\ker q$ of $A\times_{\alpha}^{\piso} \N^{2}$
is isomorphic to the full corner $p(\J\rtimes_{\beta} \Z^{2})p$ of the algebra $\J\rtimes_{\beta} \Z^{2}$ via the isomorphism $\Psi$,
such that
\begin{align}
\label{psi-2}
\Psi\big(i_{\N^{2}}(x)^{*}i_{A}(a)(1-i_{\N^{2}}(s)^{*}i_{\N^{2}}(s))i_{\N^{2}}(y)\big)
=p\big[j_{\Z^{2}}(x) j_{\B}\big(\phi_{0}(a)-\phi_{s}(\alpha_{s}(a))\big) j_{\Z^{2}}(y)^{*}\big]p
\end{align}
for all $x,y,s\in \N^{2}$ and $a\in A$.

In addition, see in \cite[\S5]{SZ2} that, if in the system $(A,\N^{2},\alpha)$ the action $\alpha$ is given by automorphisms of $A$,
then we have a simple picture for the algebra $\B$. In this case, the action $\alpha$ extends uniquely to an action
of the group $\Z^{2}$ on $A$ by automorphisms. Therefore, we obtain a group dynamical system
$(B_{\Z^{2}}\otimes A, \Z^{2}, \tau\otimes \alpha^{-1})$, where $B_{\Z^{2}}$ is the $C^{*}$-subalgebra of $\ell^{\infty}(\Z^{2})$
generated by the characteristic functions $\{1_{s}\in\ell^{\infty}(\Z^{2}):s\in \Z^{2}\}$, such that
\[
1_{s}(t)=
   \begin{cases}
      1 &\textrm{if}\empty\ \text{$s\leq t$,}\\
      0 &\textrm{otherwise},
   \end{cases}
\]
and the action $\tau$ of $\Z^{2}$ on $B_{\Z^{2}}$ is given by translation. Let $B_{\Z^{2},\infty}$ be the $C^{*}$-subalgebra
of $B_{\Z^{2}}$ generated by the elements $\{1_{s}-1_{t}: s\leq t\in \Z^{2}\}$, which is actually a $\tau$-invariant
essential ideal of $B_{\Z^{2}}$. Now, the algebra $\B$ is isomorphic to the tensor product $(B_{\Z^{2}}\otimes A)$, where the
isomorphism intertwines the actions $\beta$ and $(\tau\otimes \alpha^{-1})$, and it maps the ideal $\J$ isomorphically onto
the ideal $(B_{\Z^{2},\infty}\otimes A)$ of $(B_{\Z^{2}}\otimes A)$ (see \cite[Proposition 5.2]{SZ2}). As a result, by
\cite[Corollary 5.3]{SZ2}, $A\times_{\alpha}^{\piso} \N^{2}$ and the ideal $\ker q$ are full corners in the group crossed products
$(B_{\Z^{2}}\otimes A)\times_{\tau\otimes\alpha^{-1}} \Z^{2}$ and $(B_{\Z^{2},\infty}\otimes A)\times_{\tau\otimes\alpha^{-1}} \Z^{2}$,
respectively. In particular, for the trivial system $(\C,\N^{2},\id)$, the crossed product $\C\times_{\alpha}^{\piso} \N^{2}$
is isomorphic to the Toeplitz algebra $\T(\Z^{2})$ (see the remark prior to \cite[Lemma 5.4]{SZ2}), and the ideal $\ker q$ is isomorphic to the commutator ideal $\mathcal{C}_{\Z^{2}}$ of $\T(\Z^{2})$ (see \cite[Lemma 5.4]{SZ2}).

\section{The composition series of the ideals of $A\times_{\alpha}^{\piso} \N^{2}$}
\label{sec:composition}
Let $(A,\N^{2},\alpha)$ be a dynamical system consisting of a $C^{*}$-algebra $A$ and an action $\alpha$ of $\N^{2}$ by extendible endomorphisms of $A$. In this section, we show that the crossed product $A\times_{\alpha}^{\piso} \N^{2}$ of the system contains two essential ideals
$\I_{1}$ and $\I_{2}$ corresponding to two generators of the group $\Z^{2}$ such that $\ker q=\I_{1}+\I_{2}$. Therefore, we obtain
a composition series
\begin{align}
\label{compose-1}
0\leq \I_{1}\cap\I_{2}\leq \ker q \leq A\times_{\alpha}^{\piso} \N^{2}
\end{align}
of ideals, for which, we identify the subquotients with familiar algebras. Of course, we already know that
$(A\times_{\alpha}^{\piso} \N^{2}) / \ker q \simeq A\times_{\alpha}^{\iso} \N^{2}$.

To start, firstly, the action $\alpha$ induces two actions $\delta$ and $\gamma$ of $\N$ on $A$ by extendible endomorphisms, such that
\begin{align}
\label{delgam}
\delta_{n}:=\alpha_{(0,n)}\ \textrm{and}\ \gamma_{n}:=\alpha_{(n,0)}
\end{align}
for every $n\in \N$. Hence, two dynamical systems $(A,\N,\delta)$ and $(A,\N,\gamma)$ are obtained, which are actually generated
by the single endomorphisms
$\delta:=\delta_{1}$ and $\gamma:=\gamma_{1}$, respectively. We obviously have
\begin{align}
\label{alph-delgam}
\alpha_{(m,n)}=\delta_{n}\gamma_{m}=\gamma_{m}\delta_{n}
\end{align}
for all $m,n\in \N$. Now, we define two subalgebras $D$ and $C$ of $\ell^{\infty}(\Z,A)$, which are actually the corresponding algebra $\B$
to the totally ordered abelian group $(\Z,\N)$ and the systems $(A,\N,\delta)$ and $(A,\N,\gamma)$, respectively (see also \cite[\S 6]{SZ}).
Thus, we have
$$D=\clsp\{\varphi_{n}(a): n\in \Z, a\in A\}\ \textrm{and}\ C=\clsp\{\psi_{n}(a): n\in \Z, a\in A\},$$ where the maps
$\varphi_{n}:A\rightarrow \ell^{\infty}(\Z,A)$ and $\psi_{n}:A\rightarrow \ell^{\infty}(\Z,A)$ are the (extendible) embeddings defined by
\[
\varphi_{n}(a)(m)=
   \begin{cases}
      \delta_{m-n}(a) &\textrm{if}\empty\ \text{$n\leq m$,}\\
      0 &\textrm{otherwise},
   \end{cases}
\]
and
\[
\psi_{n}(a)(m)=
   \begin{cases}
      \gamma_{m-n}(a) &\textrm{if}\empty\ \text{$n\leq m$,}\\
      0 &\textrm{otherwise}
   \end{cases}
\]
for all $m,n\in \Z$ and $a\in A$, respectively. Note that, the algebras $D$ and $C$ both contain the algebra
$C_{0}(\Z)\otimes A$ as an essential ideal, such that
\begin{eqnarray*}
\begin{array}{rcl}
C_{0}(\Z)\otimes A&=&\clsp\{\varphi_{n}(a)-\varphi_{m}(\delta_{m-n}(a)): n\leq m\in \Z, a\in A\}\\
&=&\clsp\{\psi_{n}(a)-\psi_{m}(\gamma_{m-n}(a)): n\leq m\in \Z, a\in A\}.
\end{array}
\end{eqnarray*}
Indeed, $C_{0}(\Z)\otimes A$ is the corresponding (essential) ideal $\J$ for the algebras $D$ and $C$. Next, for our purpose,
we show that the algebra $\B$ associated with the system $(A,\N^{2},\alpha)$ sits in the algebras
$\ell^{\infty}(\Z,D)$ and $\ell^{\infty}(\Z,C)$ as a $C^{*}$-subalgebra via two families of fibers (column and row fibers).
For every $m,n\in \Z$ and $a\in A$, define a map $\Delta_{(m,n)}: A \rightarrow \ell^{\infty}(\Z,D)$ by
\[
\Delta_{(m,n)}(a)(r)=
   \begin{cases}
      \varphi_{n}(\gamma_{r-m}(a)) &\textrm{if}\empty\ \text{$m\leq r$}\\
      0 &\textrm{otherwise.}
   \end{cases}
\]
Each function $\Delta_{(m,n)}(a)$ can actually be viewed as a sequence of column fibers. Also, each map $\Delta_{(m,n)}$ is a norm-preserving $*$-homomorphism. Now, let $\B_{\delta}$ be the $C^{*}$-subalgebra of $\ell^{\infty}(\Z,D)$ generated by
$\{\Delta_{(m,n)}(a): m,n\in \Z, a\in A\}$. Since calculation shows that $\Delta_{(m,n)}(a)^{*}=\Delta_{(m,n)}(a^{*})$ and $\Delta_{(m,n)}(a)\Delta_{(t,u)}(b)=\Delta_{(x,y)}(c)$, where $(x,y)=(m,n) \vee (t,u)=(m\vee t,n\vee u)$ and $c=\delta_{y-n}(\gamma_{x-m}(a))\delta_{y-u}(\gamma_{x-t}(b))$, it follows that
$$\B_{\delta}=\clsp\{\Delta_{(m,n)}(a): m,n\in \Z, a\in A\}.$$

Similarly, for every $m,n\in \Z$ and $a\in A$, we define a map $\Gamma_{(m,n)}: A \rightarrow \ell^{\infty}(\Z,C)$ by
\[
\Gamma_{(m,n)}(a)(s)=
   \begin{cases}
      \psi_{m}(\delta_{s-n}(a)) &\textrm{if}\empty\ \text{$n\leq s$}\\
      0 &\textrm{otherwise,}
   \end{cases}
\]
which is an injective $*$-homomorphism. In this case, each function $\Gamma_{(m,n)}(a)$ can be viewed as a (columnar) sequence of row fibers.
Then, for the $C^{*}$-subalgebra $\B_{\gamma}$ of $\ell^{\infty}(\Z,C)$ generated by $\{\Gamma_{(m,n)}(a): m,n\in \Z, a\in A\}$,
we have
$$\B_{\gamma}=\clsp\{\Gamma_{(m,n)}(a): m,n\in \Z, a\in A\}.$$

\begin{lemma}
\label{extension}
Each homomorphism $\Delta_{(m,n)}: A \rightarrow \B_{\delta}$ extends to a strictly continuous homomorphism
$\overline{\Delta}_{(m,n)}: \M(A) \rightarrow \M(\B_{\delta})$ of multiplier algebras as well as each homomorphism
$\Gamma_{(m,n)}: A \rightarrow \B_{\gamma}$.
\end{lemma}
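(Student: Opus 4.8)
The plan is to verify the standard criterion for a $*$-homomorphism between $C^*$-algebras to extend to the multiplier algebras: namely, that $\Delta_{(m,n)}$ (and likewise $\Gamma_{(m,n)}$) is nondegenerate in the sense that $\overline{\newspan}\{\Delta_{(m,n)}(a)b : a\in A,\ b\in\B_\delta\}=\B_\delta$. Once this is established, the general theory (as used throughout \cite{SZ2}, e.g. \cite[Lemma 3.2]{SZ2}) produces the unique strictly continuous extension $\overline{\Delta}_{(m,n)}:\M(A)\to\M(\B_\delta)$. So the real content is to identify an approximate identity of $A$ whose image under $\Delta_{(m,n)}$ acts as an approximate identity for $\B_\delta$.

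First I would fix an approximate identity $\{a_\lambda\}$ of $A$ and show, using the extendibility of the endomorphisms $\delta$ and $\gamma$, that for each generator $\Delta_{(t,u)}(b)$ of $\B_\delta$ we have $\Delta_{(m,n)}(a_\lambda)\,\Delta_{(t,u)}(b)\to\Delta_{(t,u)}(b)$ in norm. The multiplication formula recorded just before the lemma, $\Delta_{(m,n)}(a)\Delta_{(t,u)}(b)=\Delta_{(x,y)}(c)$ with $(x,y)=(m\vee t,n\vee u)$ and $c=\delta_{y-n}(\gamma_{x-m}(a_\lambda))\,\delta_{y-u}(\gamma_{x-t}(b))$, reduces this to a statement inside $A$: that $\delta_{y-n}(\gamma_{x-m}(a_\lambda))\,\delta_{y-u}(\gamma_{x-t}(b))$ converges to $\delta_{y-u}(\gamma_{x-t}(b))$. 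Because $\delta$ and $\gamma$ are extendible, $\{\gamma_{x-m}(a_\lambda)\}$ is an approximate identity modulo a projection for $\overline{\gamma_{x-m}(A)\cdot(\,\cdot\,)}$, and applying the strictly continuous extension $\overline{\delta}_{y-n}$ one gets that $\delta_{y-n}(\gamma_{x-m}(a_\lambda))$ converges strictly in $\M(A)$ to a projection $p$ that dominates the range projection of $\delta_{y-u}\gamma_{x-t}$; hence it fixes $\delta_{y-u}(\gamma_{x-t}(b))$ in the limit. This gives nondegeneracy on generators, and since these span a dense subspace of $\B_\delta$, an $\varepsilon/3$ argument extends it to all of $\B_\delta$. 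The argument for $\Gamma_{(m,n)}$ is identical with the roles of $\delta$ and $\gamma$ interchanged, using the analogous product formula for the $\Gamma$'s.

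The main obstacle I anticipate is bookkeeping with the two commuting endomorphisms and the suprema in $\Z^2$: one must check that the projection to which $\delta_{y-n}(\gamma_{x-m}(a_\lambda))$ converges strictly really does act as a left identity on $\delta_{y-u}(\gamma_{x-t}(b))$, which boils down to the elementary fact that $\overline{\delta}_{y-n}\circ\overline{\gamma}_{x-m}$ and $\overline{\delta}_{y-u}\circ\overline{\gamma}_{x-t}$ have comparable range projections because $(x,y)\geq(m,n)$ and $(x,y)\geq(t,u)$ in the lattice order, together with $\delta\gamma=\gamma\delta$ from \eqref{alph-delgam}. Since $\delta_{y-n}\gamma_{x-m}=\alpha_{(x-m,y-n)}$ and $(x-m,y-n)\leq(x-t,y-u)$ need not hold, one actually compares both to $\alpha_{(x,y)-(m\wedge\cdots)}$; the clean way is to note $\delta_{y-n}(\gamma_{x-m}(A))\supseteq\delta_{y-u}(\gamma_{x-t}(A))$ fails in general, so instead one uses that $\overline{\delta}_{y-n}(\overline{\gamma}_{x-m}(1))$ is a projection in $\M(A)$ commuting with and dominating $\overline{\delta}_{y-u}(\overline{\gamma}_{x-t}(1))$ after rewriting both through $\overline{\alpha}_{(x,y)}$-type expressions and invoking that $\overline\alpha_s(1)\overline\alpha_t(1)=\overline\alpha_{s\vee t}(1)\le\overline\alpha_s(1)$. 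Once this domination is pinned down the convergence is immediate, and the rest is routine.

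\bigskip

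Alternatively, one can bypass the explicit computation entirely by exhibiting $\B_\delta$ as an iterated version of the algebra $\B$ from \cite[Lemma 3.2]{SZ2}: the map $\Delta_{(m,n)}$ is, fibrewise in the first variable, built from the embeddings $\varphi_n:A\to D$ (which are extendible by construction, being instances of the $\phi$'s of \cite{SZ2} for the system $(A,\N,\delta)$), and then assembled via the $\gamma$-action into $\ell^\infty(\Z,D)$ exactly as the $\phi_s$ of \cite{SZ2} are assembled for a one-variable system with coefficient algebra $D$. Thus $\overline{\Delta}_{(m,n)}$ is obtained by composing the extension of the first-variable embedding $D\to\B_\delta$ (given by \cite[Lemma 3.2]{SZ2} applied to $(D,\N,\text{lift of }\gamma)$) with $\overline{\varphi}_n:\M(A)\to\M(D)$ (given by the same lemma applied to $(A,\N,\delta)$); strict continuity of a composition of strictly continuous maps then finishes it. I would likely present the direct approach for concreteness but mention this structural viewpoint, since it makes transparent why the lemma is true and why the symmetric statement for $\Gamma_{(m,n)}$ holds by the same token with $D$ replaced by $C$.
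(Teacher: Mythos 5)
Your primary argument rests on the wrong extension criterion, and the convergence it needs is false. To extend $\Delta_{(m,n)}$ to the multiplier algebras in the sense used here one does \emph{not} need nondegeneracy; one needs an approximate unit $\{a_{\lambda}\}$ of $A$ such that $\Delta_{(m,n)}(a_{\lambda})$ converges \emph{strictly to a projection} in $\M(\B_{\delta})$ --- exactly the criterion the paper recalls for extendible endomorphisms, where it is stressed that $\overline{\alpha}(1)$ need not be $1$. In fact $\Delta_{(m,n)}$ is not nondegenerate in general: by the product formula, $\Delta_{(m,n)}(a_{\lambda})\Delta_{(t,u)}(b)=\Delta_{(m\vee t,\,n\vee u)}(c_{\lambda})$ is supported on $\{r\geq m\vee t\}$, while $\Delta_{(t,u)}(b)$ is supported on $\{r\geq t\}$, so for $t<m$ the product cannot converge to $\Delta_{(t,u)}(b)$. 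Concretely, for the trivial system $(\C,\N^{2},\id)$ one has $\Delta_{(0,0)}(1)\Delta_{(-1,-1)}(1)=\Delta_{(0,0)}(1)\neq\Delta_{(-1,-1)}(1)$, and $\clsp\{\Delta_{(0,0)}(a)\xi\}$ is the proper corner $\Delta_{(0,0)}(1)\B_{\delta}\Delta_{(0,0)}(1)\B_\delta$'s left ideal, not all of $\B_{\delta}$. (Indeed the entire full-corner machinery of \cite{SZ2}, with $p=\overline{j_{\B}\circ\phi_{(0,0)}}(1)$ a \emph{proper} projection, depends on these embeddings failing to be nondegenerate.) The ``domination'' step you invoke to salvage the limit also fails for the reason you yourself flag: $(x-m,y-n)\leq(x-t,y-u)$ would require $m\geq t$ and $n\geq u$, which does not hold for a general pair of indices; and even if the limit projection did act as a left identity on $\delta_{y-u}(\gamma_{x-t}(b))$, the resulting element $\Delta_{(x,y)}\big(\delta_{y-u}(\gamma_{x-t}(b))\big)$ is the truncation of $\Delta_{(t,u)}(b)$ to $\{r\geq x\}$, still not $\Delta_{(t,u)}(b)$. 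What is true, and what the proof actually requires, is that $c_{\lambda}=\delta_{y-n}(\gamma_{x-m}(a_{\lambda}))\,\delta_{y-u}(\gamma_{x-t}(b))$ converges in norm to $\overline{\delta}_{y-n}(\overline{\gamma}_{x-m}(1))\,\delta_{y-u}(\gamma_{x-t}(b))$ (strict convergence against a fixed element of $A$), so that $\Delta_{(m,n)}(a_{\lambda})$ converges strictly to the projection multiplier $\overline{\Delta}_{(m,n)}(1)$; the extension then follows by the standard argument.

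Your alternative, structural argument is sound and is essentially what the paper intends by its one-line appeal to \cite[Lemma 3.2]{SZ2}: one writes $\Delta_{(m,n)}=\Phi_{m}\circ\varphi_{n}$, where $\Phi_{m}$ is the analogue of $\phi_{m}$ for the one-variable system $(D,\N,\tilde{\gamma})$ and $\tilde{\gamma}$ is the pointwise lift of $\gamma$ to $D$ (well defined because $\delta\gamma=\gamma\delta$ gives $\tilde\gamma(\varphi_n(a))=\varphi_n(\gamma(a))$), and then composes the strictly continuous extensions $\overline{\varphi}_{n}:\M(A)\to\M(D)$ and $\overline{\Phi}_{m}:\M(D)\to\M(\B_{\delta})$. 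If you take that route, the only detail to supply is the extendibility of $\tilde{\gamma}$ and of $\Phi_{m}$ --- again via strict convergence to a projection, not nondegeneracy. I would present that version and either drop the direct approach or repair it as indicated above.
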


\begin{proof}
We skip the proof as it is similar to the proof of \cite[Lemma 3.2]{SZ2}. In brief, this is due to the extendibility
of the endomorphisms $\delta_{n}$ and $\gamma_{n}$, and homomorphisms $\varphi_{n}$ and $\psi_{n}$.
\end{proof}

It therefore follows by Lemma \ref{extension} that
\[
\overline{\Delta}_{(m,n)}(c)(r)=
   \begin{cases}
      \overline{\varphi}_{n}(\overline{\gamma}_{r-m}(c)) &\textrm{if}\empty\ \text{$m\leq r$}\\
      0 &\textrm{otherwise,}
   \end{cases}
\]
for all $m,n\in \Z$ and $c\in \M(A)$ (similarly for $\overline{\Gamma}_{(m,n)}(c)$).

\begin{lemma}
\label{B=Bdelta=Bgamma}
There is an isomorphism $\Lambda_{\delta}$ of $\B$ onto $\B_{\delta}$ such that
$$\Lambda_{\delta}(\phi_{(m,n)}(a))=\Delta_{(m,n)}(a)$$ for all $m,n\in \Z$ and $a\in A$.
Similarly, the algebra $\B$ is isomorphic to the algebra $\B_{\gamma}$ via an isomorphism $\Lambda_{\gamma}$ such that $$\Lambda_{\gamma}(\phi_{(m,n)}(a))=\Gamma_{(m,n)}(a)$$ for all $m,n\in \Z$ and $a\in A$.
\end{lemma}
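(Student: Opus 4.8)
The plan is to build the isomorphism $\Lambda_\delta$ explicitly on the dense spanning set of $\B$ and check it is a well-defined, isometric $*$-isomorphism onto $\B_\delta$; the argument for $\Lambda_\gamma$ is identical after exchanging the roles of the two coordinates. Recall that $\B=\clsp\{\phi_{(m,n)}(a):(m,n)\in\Z^{2},a\in A\}$ and $\B_\delta=\clsp\{\Delta_{(m,n)}(a):(m,n)\in\Z^{2},a\in A\}$. So the natural thing is to try to define $\Lambda_\delta$ by $\Lambda_\delta(\phi_{(m,n)}(a))=\Delta_{(m,n)}(a)$ and extend linearly and continuously. The key algebraic compatibility to verify is that the multiplication rules on the two generating families coincide: from $(\ref{suprem})$ we have $\phi_{s}(a)\phi_{t}(b)=\phi_{s\vee t}(\alpha_{(s\vee t)-s}(a)\alpha_{(s\vee t)-t}(b))$, and we must match this against the product formula for the $\Delta$'s recorded just above the statement, namely $\Delta_{(m,n)}(a)\Delta_{(t,u)}(b)=\Delta_{(x,y)}(c)$ with $(x,y)=(m,n)\vee(t,u)$ and $c=\delta_{y-n}(\gamma_{x-m}(a))\,\delta_{y-u}(\gamma_{x-t}(b))$. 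Using $(\ref{alph-delgam})$, i.e. $\alpha_{(p,q)}=\delta_{q}\gamma_{p}$, one sees that $\alpha_{(x,y)-(m,n)}(a)=\delta_{y-n}\gamma_{x-m}(a)$, so the two "$c$"-expressions agree; hence the generating families obey the same relations.

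To turn this into a rigorous construction I would not argue by "extending a map on generators" directly (the $\phi_{(m,n)}(a)$ are not linearly independent), but instead invoke the universal property of $\B$ among such algebras, or — more concretely — proceed in two symmetric steps. First, define a map on the $*$-algebra of finite sums by $\sum_k \phi_{s_k}(a_k)\mapsto\sum_k\Delta_{s_k}(a_k)$; the product and adjoint computations above show this is a well-defined $*$-homomorphism \emph{provided} it is well-defined on the level of elements, which reduces to showing: whenever $\sum_k\phi_{s_k}(a_k)=0$ in $\ell^\infty(\Z^{2},A)$, also $\sum_k\Delta_{s_k}(a_k)=0$ in $\ell^\infty(\Z,D)$. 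Equivalently, I want to produce a concrete $*$-isomorphism between the two concretely-realized algebras. The cleanest route is to construct an explicit isometric bijection $\Theta\colon \ell^\infty(\Z^{2},A)\supseteq \B \to \B_\delta\subseteq\ell^\infty(\Z,D)\subseteq\ell^\infty(\Z,\ell^\infty(\Z,A))$ coming from the canonical identification $\ell^\infty(\Z^{2},A)\cong\ell^\infty(\Z,\ell^\infty(\Z,A))$, $\xi\mapsto\big(r\mapsto(n\mapsto\xi(r,n))\big)$, and then verify that under this identification $\phi_{(m,n)}(a)$ lands exactly on $\Delta_{(m,n)}(a)$: indeed $\phi_{(m,n)}(a)(r,k)=\alpha_{(r,k)-(m,n)}(a)=\delta_{k-n}\gamma_{r-m}(a)$ when $(m,n)\le(r,k)$ and $0$ otherwise, while $\Delta_{(m,n)}(a)(r)(k)=\varphi_n(\gamma_{r-m}(a))(k)=\delta_{k-n}(\gamma_{r-m}(a))$ when $m\le r$ and $n\le k$, and $0$ otherwise — these agree fiberwise. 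Since $\Theta$ is an isometric $*$-isomorphism of the ambient algebras carrying the generating set of $\B$ bijectively onto that of $\B_\delta$, it restricts to the desired isomorphism $\Lambda_\delta\colon\B\to\B_\delta$.

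The main obstacle, and the one point requiring care, is that $\varphi_n$ takes values in $\ell^\infty(\Z,A)$, so a priori $\Delta_{(m,n)}(a)$ is a function $\Z\to\ell^\infty(\Z,A)$ rather than $\Z\to D$; one must check $\Delta_{(m,n)}(a)(r)\in D$ for each $r$, which is immediate since $\Delta_{(m,n)}(a)(r)=\varphi_n(\gamma_{r-m}(a))$ is (a zero or) a single generator of $D$, and more importantly that the identification $\ell^\infty(\Z^{2},A)\cong\ell^\infty(\Z,\ell^\infty(\Z,A))$ restricts correctly — i.e. that $\B$ is carried \emph{into} $\ell^\infty(\Z,D)$ and in fact onto $\B_\delta$. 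Onto-ness is by definition of $\B_\delta$ as the closed span of the $\Delta_{(m,n)}(a)$; landing in $\ell^\infty(\Z,D)$ follows because each generator does and $\ell^\infty(\Z,D)$ is closed. Finally, the statement about $\overline{\Delta}_{(m,n)}$ recorded before the lemma is consistent with this picture: $\Lambda_\delta$ extends to an isomorphism $\overline{\Lambda}_\delta\colon\M(\B)\to\M(\B_\delta)$ of multiplier algebras (by Lemma~\ref{extension} and strict continuity), intertwining $\overline{\phi}_{(m,n)}$ with $\overline{\Delta}_{(m,n)}$, which one can note in passing for later use. The $\Lambda_\gamma$ case is verbatim the same computation with $\psi_m(\delta_{s-n}(a))$ in place of $\varphi_n(\gamma_{r-m}(a))$, using $\alpha_{(m,n)}=\gamma_m\delta_n$ from $(\ref{alph-delgam})$.
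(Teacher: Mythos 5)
Your proof is correct, and it reaches the conclusion by a more direct route than the paper. Where you use the canonical isometric $*$-isomorphism $\Theta:\ell^{\infty}(\Z^{2},A)\to\ell^{\infty}(\Z,\ell^{\infty}(\Z,A))$ of the ambient function algebras and simply check fiberwise that $\Theta(\phi_{(m,n)}(a))=\Delta_{(m,n)}(a)$ (so that $\Theta$ carries the closed span $\B$ onto the closed span $\B_{\delta}$, giving well-definedness, isometry, and the $*$-homomorphism property in one stroke), the paper instead fixes a faithful nondegenerate representation $\pi$ of $A$ on $H$, realizes $\B$ and $\B_{\delta}$ faithfully on $\ell^{2}(\Z^{2})\otimes H$ and $\ell^{2}(\Z)\otimes\ell^{2}(\Z,H)$ respectively, and uses the unitary implementing the regrouping $\ell^{2}(\Z^{2})\otimes H\simeq\ell^{2}(\Z)\otimes\ell^{2}(\Z,H)$ to intertwine the two representations on finite sums of generators (equation (\ref{eq17})); this yields that the map on finite sums is isometric, hence extends, with multiplicativity and the $*$-property checked separately on spanning elements. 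The underlying computation is the same in both arguments --- namely $\phi_{(m,n)}(a)(r,k)=\delta_{k-n}(\gamma_{r-m}(a))=\Delta_{(m,n)}(a)(r)(k)$ via $\alpha_{(p,q)}=\delta_{q}\gamma_{p}$ --- but your version avoids the detour through Hilbert-space representations and handles well-definedness of the map on the (linearly dependent) generators without any extra work, which is the one point you correctly identified as needing care. Your observation that $\B_{\delta}$ lands inside the closed subalgebra $\ell^{\infty}(\Z,D)$ because each generator does is also the right justification. The only inessential difference is your closing aside about extending to multiplier algebras, which is not needed for the lemma itself.
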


\begin{proof}
We only prove the existence of the isomorphism $\Lambda_{\delta}$ as the existence of the isomorphism $\Lambda_{\gamma}$ follows similarly.
Define a map
$$\Lambda_{\delta}:\lsp\{\phi_{(m,n)}(a): (m,n)\in\Z^{2}, a\in A\}\rightarrow \B_{\delta}$$ by
\begin{align}
\label{lambda-del}
\Lambda_{\delta}\bigg(\sum_{i} \phi_{(m_{i},n_{i})}(a_{i})\bigg)=\sum_{i} \Delta_{(m_{i},n_{i})}(a_{i}).
\end{align}
Obviously, $\Lambda_{\delta}$ is linear. We show that it preserves the norm, from which, it follows that it is a well-defined linear isometry.
Firstly, for any Hilbert space $H$, there is an isomorphism $U$ of the Hilbert space
$$\ell^{2}(\Z^{2},H)\simeq \ell^{2}(\Z^{2})\otimes H\simeq (\ell^{2}(\Z)\otimes \ell^{2}(\Z))\otimes H$$ onto
the Hilbert space $$\ell^{2}(\Z)\otimes (\ell^{2}(\Z)\otimes H)\simeq \ell^{2}(\Z)\otimes \ell^{2}(\Z,H)$$ which induces
the following isomorphism
\begin{align}
\label{iso-U}
T\in B(\ell^{2}(\Z^{2})\otimes H) \mapsto UTU^{-1}\in B(\ell^{2}(\Z)\otimes \ell^{2}(\Z,H))
\end{align}
of $C^{*}$-algebras. Now, let $\pi:A\rightarrow B(H)$ be a faithful and nondegenerate representation of $A$ on a Hilbert space $H$.
Then, the map $\tilde{\pi}:\B \rightarrow B(\ell^{2}(\Z^{2})\otimes H)$ defined by
$$(\tilde{\pi}(\xi)f)(r,s)=\pi(\xi(r,s))f(r,s)\ \ \ \textrm{for all}\ \xi\in \B\ \textrm{and}\ f\in \ell^{2}(\Z^{2})\otimes H$$
is a nondegenerate and faithful representation of $\B$ on the Hilbert space $\ell^{2}(\Z^{2})\otimes H$.
On the other hand, let $\rho:D\rightarrow B(\ell^{2}(\Z,H))$ be the nondegenerate and faithful representation defined by
$$(\rho(\xi)f)(s)=\pi(\xi(s))f(s)\ \ \ \textrm{for all}\ \xi\in D\ \textrm{and}\ f\in \ell^{2}(\Z,H).$$
Then, $\rho$, itself, induces a map $\tilde{\rho}:\B_{\delta} \rightarrow B(\ell^{2}(\Z)\otimes \ell^{2}(\Z,H))$ defined by
$$(\tilde{\rho}(\eta)g)(r)=\rho(\eta(r))g(r)$$
for all $\eta\in \B_{\delta}$ and $g\in \ell^{2}(\Z)\otimes \ell^{2}(\Z,H)$. It is not difficult to see that $\tilde{\rho}$
is indeed a nondegenerate and faithful representation of the algebra $\B_{\delta}$ on the Hilbert space
$\ell^{2}(\Z)\otimes \ell^{2}(\Z,H)$. Now, calculation on spanning elements shows that
\begin{align}\label{eq17}
U \tilde{\pi}\bigg(\sum_{i} \phi_{(m_{i},n_{i})}(a_{i})\bigg)=\tilde{\rho}\bigg(\sum_{i} \Delta_{(m_{i},n_{i})}(a_{i})\bigg)U,
\end{align}
from which, if follows that
\begin{eqnarray*}
\begin{array}{rcl}
\bigg\| \Lambda_{\delta}\bigg(\displaystyle\sum_{i} \phi_{(m_{i},n_{i})}(a_{i})\bigg) \bigg\|
&=&\bigg\| \displaystyle\sum_{i} \Delta_{(m_{i},n_{i})}(a_{i}) \bigg\|\\
&=&\bigg\| \tilde{\rho}\bigg(\sum_{i} \Delta_{(m_{i},n_{i})}(a_{i})\bigg) \bigg\|\\
&=&\bigg\| U \tilde{\pi}\bigg(\sum_{i} \phi_{(m_{i},n_{i})}(a_{i})\bigg) U^{-1} \bigg\|\\
&=&\bigg\| \tilde{\pi}\bigg(\sum_{i} \phi_{(m_{i},n_{i})}(a_{i})\bigg) \bigg\|
=\bigg\| \displaystyle\sum_{i} \phi_{(m_{i},n_{i})}(a_{i}) \bigg\|.
\end{array}
\end{eqnarray*}
So, $\Lambda_{\delta}$ is a well-defined liner map which preserves the norm, and therefore, it extends to a linear isometry
of $\B$ into $\B_{\delta}$. We use the same notation $\Lambda_{\delta}$ for the extension, which is clearly onto
by (\ref{lambda-del}).

Finally, one can easily calculate on spanning elements to see that $\Lambda_{\delta}$ preserves involution and multiplication, too.
Thus, $\Lambda_{\delta}$ is an isomorphism of $\B$ onto $\B_{\delta}$.
\end{proof}

\begin{prop}
\label{C0 in Bgam-Bdel}
The algebras $\B_{\delta}$ and $\B_{\gamma}$ contain the algebras $C_{0}(\Z) \otimes D$ and $C_{0}(\Z) \otimes C$, respectively, as essential ideals, such that
\begin{align}\label{C0-delta}
C_{0}(\Z) \otimes D=\clsp\{\Delta_{(m,n)}(a)-\Delta_{(t,n)}(\gamma_{t-m}(a)): m,t,n\in \Z\ \textrm{with}\ m\leq t, a\in A\},
\end{align}
and
\begin{align}\label{C0-gamma}
C_{0}(\Z) \otimes C=\clsp\{\Gamma_{(m,n)}(a)-\Gamma_{(m,u)}(\delta_{u-n}(a)): m,n,u\in \Z\ \textrm{with}\ n\leq u, a\in A\}.
\end{align}

\end{prop}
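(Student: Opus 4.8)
The plan is to prove both assertions by the same argument; I describe it for $\B_{\delta}$, the case of $\B_{\gamma}$ being obtained by interchanging the roles of the two coordinates (replacing $\Delta,\varphi,D,\gamma$ by $\Gamma,\psi,C,\delta$ and swapping which coordinate plays the role of the outer copy of $\Z$). Recall that $\B_{\delta}\subseteq\ell^{\infty}(\Z,D)$, and that inside the ambient algebra $\ell^{\infty}(\Z,D)$ the tensor product $C_{0}(\Z)\otimes D$ is realized as the subalgebra of those functions $\Z\to D$ that vanish at infinity. I will produce this subalgebra inside $\B_{\delta}$ from the spanning set of (\ref{C0-delta}), and then the ideal and essentiality properties come for free from the ambient picture.

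First I would compute the spanning differences explicitly. Fix $m\leq t$ in $\Z$, $n\in\Z$ and $a\in A$, and evaluate the function $\Delta_{(m,n)}(a)-\Delta_{(t,n)}(\gamma_{t-m}(a))$ at a point $r\in\Z$. Using $\gamma_{r-t}\circ\gamma_{t-m}=\gamma_{r-m}$ when $m\leq t\leq r$, a short case analysis shows this function is $0$ for $r<m$, equals $\varphi_{n}(\gamma_{r-m}(a))$ for $m\leq r<t$, and is $0$ again for $r\geq t$. Thus each spanning difference is a finitely supported $D$-valued function, supported on $\{m,m+1,\dots,t-1\}$, and in particular it lies in $C_{c}(\Z,D)\subseteq C_{0}(\Z)\otimes D$. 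Since each such difference also lies in $\B_{\delta}$ (it is a difference of two elements of the form $\Delta_{(\cdot,\cdot)}(\cdot)$), the closed span $K$ of these differences satisfies $K\subseteq(C_{0}(\Z)\otimes D)\cap\B_{\delta}$.

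For the reverse inclusion $C_{0}(\Z)\otimes D\subseteq K$, I would specialize the computation to $t=m+1$: it produces the one-point function $\iota_{m}(\varphi_{n}(a))$, meaning the function equal to $\varphi_{n}(\gamma_{0}(a))=\varphi_{n}(a)$ at $m$ and $0$ elsewhere. Since $\clsp\{\varphi_{n}(a):n\in\Z,a\in A\}=D$, it follows that $\iota_{r}(d)\in K$ for every $r\in\Z$ and every $d\in D$; and finite sums of such one-point functions are dense in $C_{0}(\Z)\otimes D$ (truncate a function vanishing at infinity). Hence $K=C_{0}(\Z)\otimes D$, which establishes (\ref{C0-delta}) and, in particular, shows $C_{0}(\Z)\otimes D\subseteq\B_{\delta}$.

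It remains to see that $C_{0}(\Z)\otimes D$ is an essential ideal of $\B_{\delta}$. It is an ideal of the ambient algebra $\ell^{\infty}(\Z,D)$ — a product with a function vanishing at infinity again vanishes at infinity — and since it sits inside $\B_{\delta}$ it is therefore an ideal of $\B_{\delta}$. For essentiality, if $\xi\in\B_{\delta}\subseteq\ell^{\infty}(\Z,D)$ satisfies $\xi\cdot(C_{0}(\Z)\otimes D)=0$, then testing against the one-point functions $\iota_{r}(d)$ gives $\xi(r)d=0$ for all $r\in\Z$ and all $d\in D$; taking $d=\xi(r)^{*}\in D$ forces $\xi(r)=0$ for every $r$, hence $\xi=0$. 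The statement for $C_{0}(\Z)\otimes C\subseteq\B_{\gamma}$ follows verbatim, the spanning differences $\Gamma_{(m,n)}(a)-\Gamma_{(m,u)}(\delta_{u-n}(a))$ for $n\leq u$ now being supported on $\{n,\dots,u-1\}$, which yields (\ref{C0-gamma}). The only point requiring a little care is the bookkeeping in the first step — keeping track of which coordinate is the outer copy of $\Z$ and verifying the cancellation $\gamma_{r-t}\circ\gamma_{t-m}=\gamma_{r-m}$ (respectively $\delta_{s-u}\circ\delta_{u-n}=\delta_{s-n}$) — after which everything is routine.
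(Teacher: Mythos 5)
Your argument is correct and follows essentially the same route as the paper: both identify the one-step differences ($t=m+1$) with one-point functions $\iota_{m}(\varphi_{n}(a))$ to span $C_{0}(\Z)\otimes D$, and both derive the ideal and essentiality properties from the ambient inclusion $\B_{\delta}\subseteq\ell^{\infty}(\Z,D)=\M(C_{0}(\Z)\otimes D)$. The only cosmetic difference is that the paper reduces a general difference to one-step differences by a telescoping sum, whereas you evaluate it directly as a function supported on $\{m,\dots,t-1\}$; both are fine.
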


\begin{proof}
We only prove for $C_{0}(\Z) \otimes D$ and skip the proof on $C_{0}(\Z) \otimes C$ as it follows by a similar discussion. Firstly,
the right hand side of (\ref{C0-delta}) is in fact equal to
\begin{align}\label{C0-delta2}
\clsp\{\Delta_{(m,n)}(a)-\Delta_{(m+1,n)}(\gamma(a)): m,n\in \Z, a\in A\}.
\end{align}
This is due to the following calculation:
\begin{eqnarray}\label{eq18}
\begin{array}{l}
\Delta_{(m,n)}(a)-\Delta_{(t,n)}(\gamma_{t-m}(a))\\
=\big[\Delta_{(m,n)}(a)-\Delta_{(m+1,n)}(\gamma(a))\big]
+\big[\Delta_{(m+1,n)}(\gamma(a))-\Delta_{(m+2,n)}(\gamma_{2}(a))\big]\\
+\cdot\cdot\cdot+\big[\Delta_{(t-1,n)}(\gamma_{t-m-1}(a))-\Delta_{(t,n)}(\gamma_{t-m}(a))\big]\\
=\displaystyle\sum_{r=1}^{t-m}\big[\Delta_{(m+r-1,n)}(\gamma_{r-1}(a))-\Delta_{(m+r,n)}(\gamma_{r}(a))\big].
\end{array}
\end{eqnarray}
Thus, we only need to show that
\begin{align}\label{C0-delta3}
C_{0}(\Z) \otimes D=\clsp\{\Delta_{(m,n)}(a)-\Delta_{(m+1,n)}(\gamma(a)): m,n\in \Z, a\in A\}.
\end{align}
Since
$$\Delta_{(m,n)}(a)-\Delta_{(m+1,n)}(\gamma(a))=(...,0,0,0,\varphi_{n}(a),0,0,0,...),$$ where $\varphi_{n}(a)$ is in
the $m$th slot, and elements $(...,0,0,0,\varphi_{n}(a),0,0,0,...)$ span the algebra $C_{0}(\Z) \otimes D$, it follows that
(\ref{C0-delta3}) holds.

Next, to show that $C_{0}(\Z) \otimes D$ is an ideal of $\B_{\delta}$, it is enough to calculate the product
$$\Delta_{(r,s)}(b)\big[\Delta_{(m,n)}(a)-\Delta_{(m+1,n)}(\gamma(a))\big]$$ on spanning elements to see that
it belongs to $C_{0}(\Z) \otimes D$. However, we skip the calculation as it is similar to the one in the proof of
\cite[Proposition 3.3]{SZ2}. To see that the ideal $C_{0}(\Z) \otimes D$ of $\B_{\delta}$ is essential, note that it follows easily from the
fact that $\B_{\delta}\subset \ell^{\infty}(\Z,D)= \M(C_{0}(\Z) \otimes D)$.
\end{proof}

\begin{theorem}
\label{J for Z2}
Let
$$\J_{\delta}:=\clsp\{\phi_{(m,n)}(a)-\phi_{(t,n)}(\gamma_{t-m}(a)): m,t,n\in \Z\ \textrm{with}\ m\leq t, a\in A\},$$
and
$$\J_{\gamma}:=\clsp\{\phi_{(m,n)}(a)-\phi_{(m,u)}(\delta_{u-n}(a)): m,n,u\in \Z\ \textrm{with}\ n\leq u, a\in A\}.$$
Then, $\J_{\delta}$ and $\J_{\gamma}$ are essential ideals of $\B$ such that $\J_{\delta}+\J_{\gamma}=\J$ and
$\J_{\delta}\cap \J_{\gamma}=C_{0}(\Z^{2})\otimes A\simeq C_{0}(\Z)\otimes C_{0}(\Z) \otimes A$, which is an essential ideal of $\B$.
Moreover, $\J_{\delta}$ and $\J_{\gamma}$ are isomorphic to the algebras $C_{0}(\Z)\otimes D$
and $C_{0}(\Z)\otimes C$, respectively.

\end{theorem}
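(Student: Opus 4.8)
The plan is to read everything off the isomorphisms $\Lambda_{\delta},\Lambda_{\gamma}$ of Lemma \ref{B=Bdelta=Bgamma}, Proposition \ref{C0 in Bgam-Bdel}, and the realization of $\B$ as a $C^{*}$-subalgebra of $\ell^{\infty}(\Z^{2},A)$ (so that products and adjoints in $\B$ are computed pointwise). Since $\Lambda_{\delta}$ is a $C^{*}$-isomorphism carrying $\phi_{(m,n)}(a)$ to $\Delta_{(m,n)}(a)$, it maps the spanning set of $\J_{\delta}$ onto the spanning set of $C_{0}(\Z)\otimes D$ appearing in (\ref{C0-delta}); taking closed linear spans gives $\Lambda_{\delta}(\J_{\delta})=C_{0}(\Z)\otimes D$, which is an ideal of $\B_{\delta}$ by Proposition \ref{C0 in Bgam-Bdel}. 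Hence $\J_{\delta}=\Lambda_{\delta}^{-1}(C_{0}(\Z)\otimes D)$ is an ideal of $\B$ and $\Lambda_{\delta}$ restricts to an isomorphism $\J_{\delta}\simeq C_{0}(\Z)\otimes D$; the same argument with $\Lambda_{\gamma}$ and (\ref{C0-gamma}) gives that $\J_{\gamma}$ is an ideal of $\B$ with $\J_{\gamma}\simeq C_{0}(\Z)\otimes C$.

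To prove $\J_{\delta}+\J_{\gamma}=\J$, note first that by (\ref{delgam}) a generator $\phi_{(m,n)}(a)-\phi_{(t,n)}(\gamma_{t-m}(a))$ of $\J_{\delta}$ is of the form $\phi_{s}(a)-\phi_{t}(\alpha_{t-s}(a))$ with $s=(m,n)\le(t,n)$, and similarly for $\J_{\gamma}$; so $\J_{\delta}+\J_{\gamma}\subseteq\J$. For the reverse inclusion I take a generator $\phi_{s}(a)-\phi_{t}(\alpha_{t-s}(a))$ of $\J$ with $s=(s_{1},s_{2})\le t=(t_{1},t_{2})$, route it through the intermediate point $(t_{1},s_{2})$, and use (\ref{alph-delgam}):
\begin{align*}
\phi_{(s_{1},s_{2})}(a)-\phi_{(t_{1},t_{2})}(\alpha_{t-s}(a))
&=\big[\phi_{(s_{1},s_{2})}(a)-\phi_{(t_{1},s_{2})}(\gamma_{t_{1}-s_{1}}(a))\big]\\
&\qquad+\big[\phi_{(t_{1},s_{2})}(\gamma_{t_{1}-s_{1}}(a))-\phi_{(t_{1},t_{2})}(\delta_{t_{2}-s_{2}}(\gamma_{t_{1}-s_{1}}(a)))\big],
\end{align*}
where the first bracket lies in $\J_{\delta}$ and the second in $\J_{\gamma}$. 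Since the sum of two closed ideals of a $C^{*}$-algebra is closed, $\J=\J_{\delta}+\J_{\gamma}$.

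For the intersection I identify $C_{0}(\Z^{2})\otimes A$ with $c_{0}(\Z^{2},A)\subseteq\ell^{\infty}(\Z^{2},A)$ and use the standard identity $\J_{\delta}\cap\J_{\gamma}=\overline{\J_{\delta}\J_{\gamma}}$ for closed two-sided ideals. A direct evaluation (recall $\gamma_{t-m}=\alpha_{(t-m,0)}$) shows that the generator $\phi_{(m,n)}(a)-\phi_{(t,n)}(\gamma_{t-m}(a))$ of $\J_{\delta}$ is supported on the vertical half-strip $\{(r,s):m\le r<t,\ s\ge n\}$, while the generator $\phi_{(p,q)}(b)-\phi_{(p,u)}(\delta_{u-q}(b))$ of $\J_{\gamma}$ is supported on the horizontal half-strip $\{(r,s):r\ge p,\ q\le s<u\}$; these two sets meet in a finite set, so the pointwise product of two such generators is finitely supported and hence lies in $c_{0}(\Z^{2},A)$. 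Thus $\overline{\J_{\delta}\J_{\gamma}}\subseteq C_{0}(\Z^{2})\otimes A$. Conversely, for $(m,n)\in\Z^{2}$ and $x,y\in A$ the elements $\phi_{(m,n)}(x)-\phi_{(m+1,n)}(\gamma(x))\in\J_{\delta}$ and $\phi_{(m,n)}(y)-\phi_{(m,n+1)}(\delta(y))\in\J_{\gamma}$ have pointwise product equal to the function supported at $(m,n)$ with value $xy$; as $A=\clsp\{xy:x,y\in A\}$, these products span a dense subspace of $c_{0}(\Z^{2},A)$, so $C_{0}(\Z^{2})\otimes A\subseteq\overline{\J_{\delta}\J_{\gamma}}$. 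Hence $\J_{\delta}\cap\J_{\gamma}=C_{0}(\Z^{2})\otimes A$, and the identification $C_{0}(\Z^{2})\otimes A\simeq C_{0}(\Z)\otimes C_{0}(\Z)\otimes A$ is immediate. For essentiality, since $c_{0}(\Z^{2},A)\subseteq\B\subseteq\ell^{\infty}(\Z^{2},A)=\M(c_{0}(\Z^{2},A))$ the ideal $C_{0}(\Z^{2})\otimes A$ is essential in $\B$, and as it is contained in $\J_{\delta}$, in $\J_{\gamma}$, and in $\J$, all three of these are essential ideals of $\B$ as well.

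The only step I expect to need real care is the inclusion $\J_{\delta}\cap\J_{\gamma}\subseteq C_{0}(\Z^{2})\otimes A$: individual elements of $\J_{\delta}$ or $\J_{\gamma}$ may well have infinite support, so no direct support argument is available at the level of the ideals. The device is to first pass to the closed ideal product $\overline{\J_{\delta}\J_{\gamma}}$ — legitimate for closed two-sided ideals of a $C^{*}$-algebra — and only then exploit that a vertical half-strip and a horizontal half-strip always intersect in a finite rectangle.
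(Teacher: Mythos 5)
Your argument is correct and follows essentially the same route as the paper: transport via $\Lambda_{\delta}$, $\Lambda_{\gamma}$ and Proposition \ref{C0 in Bgam-Bdel} to identify the ideals, split a generator of $\J$ through the corner point $(t_{1},s_{2})$ for the sum, and compute the closed ideal product $\overline{\J_{\delta}\J_{\gamma}}$ pointwise in $\ell^{\infty}(\Z^{2},A)$ for the intersection. Your half-strip support formulation of the product computation and your derivation of essentiality upward from $C_{0}(\Z^{2})\otimes A$ (rather than downward from $\J_{\delta}$ and $\J_{\gamma}$) are only cosmetic variations on the paper's explicit four-term expansion and its appeal to the essentiality of $C_{0}(\Z)\otimes D$ in $\B_{\delta}$.
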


\begin{proof}
By applying the isomorphism $\Lambda_{\delta}:\B\rightarrow \B_{\delta}$ in Lemma \ref{B=Bdelta=Bgamma}, since $C_{0}(\Z)\otimes D$ is
an essential ideal of $\B_{\delta}$ by Proposition \ref{C0 in Bgam-Bdel},
$\J_{\delta}:=\Lambda_{\delta}^{-1}\big(C_{0}(\Z) \otimes D\big)$ is an essential ideal of $\B$ which is clearly isomorphic to
$C_{0}(\Z) \otimes D$. Moreover, the following equation
$$\Lambda_{\delta}^{-1}\big(\Delta_{(m,n)}(a)-\Delta_{(t,n)}(\gamma_{t-m}(a))\big)=\phi_{(m,n)}(a)-\phi_{(t,n)}(\gamma_{t-m}(a))$$
along with (\ref{C0-delta3}) implies that
\begin{eqnarray*}
\begin{array}{rcl}
\J_{\delta}&=&\clsp\{ \phi_{(m,n)}(a)-\phi_{(t,n)}(\gamma_{t-m}(a)): m,t,n\in \Z\ \textrm{with}\ m\leq t, a\in A \}\\
&=&\clsp\{ \phi_{(m,n)}(a)-\phi_{(m+1,n)}(\gamma(a)): m,n\in \Z, a\in A \}.
\end{array}
\end{eqnarray*}
The proof on $\J_{\gamma}$ follows similarly (by using the isomorphism $\Lambda_{\gamma}$) that we skip it here.

Next, we show that $\J=\J_{\delta}+\J_{\gamma}$. The inclusion $\J_{\delta}+\J_{\gamma}\subset \J$ is immediate.
For the other inclusion, take any spanning element $\phi_{(m,n)}(a)-\phi_{(t,u)}(\alpha_{(t-m,u-n)}(a))$ of $\J$, where $a\in A$ and $(m,n), (t,u)\in \Z^{2}$ with $(m,n)\leq (t,u)$. We have
\begin{eqnarray*}
\begin{array}{l}
\phi_{(m,n)}(a)-\phi_{(t,u)}(\alpha_{(t-m,u-n)}(a))\\
=[\phi_{(m,n)}(a)-\phi_{(t,n)}(\gamma_{t-m}(a))]+[\phi_{(t,n)}(\gamma_{t-m}(a))-\phi_{(t,u)}(\delta_{u-n}(\gamma_{t-m}(a)))]\\
=[\phi_{(m,n)}(a)-\phi_{(t,n)}(\gamma_{t-m}(a))]+[\phi_{(t,n)}(b)-\phi_{(t,u)}(\delta_{u-n}(b))]\in (\J_{\delta}+\J_{\gamma}),
\end{array}
\end{eqnarray*}
where $b=\gamma_{t-m}(a)\in A$. This implies that $\J\subset \J_{\delta}+\J_{\gamma}$. Therefore,
$\J=\J_{\delta}+\J_{\gamma}$, from which, it follows that the ideal $\J$ is actually spanned by the elements of the form
\begin{align}\label{J-span-elm}
[\phi_{(m,n)}(a)-\phi_{(m+1,n)}(\gamma(a))]+[\phi_{(t,u)}(b)-\phi_{(t,u+1)}(\delta(b))],
\end{align}
where $m,n,t,u\in \Z$ and $a,b\in A$.

Now, to see that $\J_{\delta}\cap \J_{\gamma}=C_{0}(\Z^{2})\otimes A$, first recall that $C_{0}(\Z^{2})\otimes A$ is spanned
by the elements (finitely supported functions) $f_{(m,n)}^{a}:\Z^{2}\rightarrow A$ defined by
\begin{align}\label{span-el-C0}
f_{(m,n)}^{a}(r,s)=
   \begin{cases}
      a &\textrm{if}\empty\ \text{$(r,s)=(m,n)$}\\
      0 &\textrm{otherwise}
   \end{cases}
\end{align}
for every $a\in A$ and $(m,n)\in \Z^{2}$. Then, it is not difficult to see that each function $f_{(m,n)}^{a}$ is actually equal
to the element
\begin{align}\label{C0-in-Jd}
[\phi_{(m,n)}(a)-\phi_{(m+1,n)}(\gamma(a))]-[\phi_{(m,n+1)}(\delta(a))-\phi_{(m+1,n+1)}(\alpha_{(1,1)}(a))]
\end{align}
of $\J_{\delta}$, which is also equal to
\begin{align}\label{C0-in-Jg}
[\phi_{(m,n)}(a)-\phi_{(m,n+1)}(\delta(a))]-[\phi_{(m+1,n)}(\gamma(a))-\phi_{(m+1,n+1)}(\alpha_{(1,1)}(a))]\in \J_{\gamma},
\end{align}
where $\alpha_{(1,1)}(a)=\gamma(\delta(a))=\delta(\gamma(a))$. This implies that
$C_{0}(\Z^{2})\otimes A\subset \J_{\delta}\cap \J_{\gamma}$. For the other inclusion, as
$\J_{\delta}\cap \J_{\gamma}=\J_{\delta} \J_{\gamma}$, it is enough to show that each product
\begin{align}\label{eq19}
[\phi_{(m,n)}(a)-\phi_{(m+1,n)}(\gamma(a))][\phi_{(t,u)}(b)-\phi_{(t,u+1)}(\delta(b))]
\end{align}
of the spanning elements of $\J_{\delta}$ and $\J_{\gamma}$ belongs to $C_{0}(\Z^{2})\otimes A$. To calculate the product (\ref{eq19}) (of two functions in $\ell^{\infty}(\Z^{2},A)$), think of the intersection point of two discrete rays in $\R^{2}$.
One is vertical corresponding to $[\phi_{(m,n)}(a)-\phi_{(m+1,n)}(\gamma(a))]$ with the initial point $(m,n)$, and the other one is horizontal corresponding to $[\phi_{(t,u)}(b)-\phi_{(t,u+1)}(\delta(b))]$ with
the initial point $(t,u)$. These two rays have only one intersection at the point $(m,u)$ if $m\geq t$ and $n\leq u$. Otherwise, there is no intersection point. This is equivalent to saying that the product (\ref{eq19}), as a function
in $\ell^{\infty}(\Z^{2},A)$, is nonzero only when $m\geq t$ and $n\leq u$. So, in this case, we have
\begin{eqnarray*}
\begin{array}{l}
[\phi_{(m,n)}(a)-\phi_{(m+1,n)}(\gamma(a))][\phi_{(t,u)}(b)-\phi_{(t,u+1)}(\delta(b))]\\
=\phi_{(m,n)}(a)\phi_{(t,u)}(b)-\phi_{(m,n)}(a)\phi_{(t,u+1)}(\delta(b))-\phi_{(m+1,n)}(\gamma(a))\phi_{(t,u)}(b)\\
+\phi_{(m+1,n)}(\gamma(a))\phi_{(t,u+1)}(\delta(b))\\
=\phi_{(m,u)}\big(\delta_{u-n}(a)\gamma_{m-t}(b)\big)-\phi_{(m,u+1)}\big(\delta_{u-n+1}(a)\gamma_{m-t}(\delta(b))\big)\\
-\phi_{(m+1,u)}\big(\delta_{u-n}(\gamma(a))\gamma_{m-t+1}(b)\big)
+\phi_{(m+1,u+1)}\big(\delta_{u-n+1}(\gamma(a))\gamma_{m-t+1}(\delta(b))\big)\ \ \ [\textrm{by}\ (\ref{suprem})]\\
=\big[\phi_{(m,u)}\big(\delta_{u-n}(a)\gamma_{m-t}(b)\big)-\phi_{(m,u+1)}\big(\delta_{u-n+1}(a)\delta(\gamma_{m-t}(b))\big)\big]\\
-\big[\phi_{(m+1,u)}\big(\gamma(\delta_{u-n}(a))\gamma_{m-t+1}(b)\big)
-\phi_{(m+1,u+1)}\big(\gamma(\delta_{u-n+1}(a))\gamma_{m-t+1}(\delta(b))\big)\big]\\
=\big[\phi_{(m,u)}(c)-\phi_{(m,u+1)}(\delta(c))\big]-\big[\phi_{(m+1,u)}(\gamma(c))-\phi_{(m+1,u+1)}(\delta(\gamma(c)))\big],\\
\end{array}
\end{eqnarray*}
where $c=\delta_{u-n}(a)\gamma_{m-t}(b)\in A$. Therefore,
\begin{eqnarray}
\label{span-Jd.Jg}
\begin{array}{l}
[\phi_{(m,n)}(a)-\phi_{(m+1,n)}(\gamma(a))][\phi_{(t,u)}(b)-\phi_{(t,u+1)}(\delta(b))]\\
=\big[\phi_{(m,u)}(c)-\phi_{(m,u+1)}(\delta(c))\big]-\big[\phi_{(m+1,u)}(\gamma(c))-\phi_{(m+1,u+1)}(\delta(\gamma(c)))\big],
\end{array}
\end{eqnarray}
which is equal to the spanning element $f_{(m,u)}^{c}$ of $C_{0}(\Z^{2})\otimes A$ (see (\ref{span-el-C0})-(\ref{C0-in-Jg})). So,
the product (\ref{eq19}) belongs to $C_{0}(\Z^{2})\otimes A$, and hence, $\J_{\delta}\cap \J_{\gamma}\subset C_{0}(\Z^{2})\otimes A$.
At last, since the ideals $\J_{\delta}$ and $\J_{\gamma}$ are both essential, it follows that
$\J_{\delta}\cap \J_{\gamma}=C_{0}(\Z^{2})\otimes A$ must be an essential ideal of $\B$.

\end{proof}

The following are two remarks that are required for Theorem \ref{J crossed Z2}.
\begin{remark}
\label{tensor CP}
Suppose that $(A \rtimes_{\alpha} \Gamma,i_{A},i_{\Gamma})$ and $(B\rtimes_{\beta} G,i_{B},i_{G})$ are the group crossed products of the
dynamical systems $(A,\Gamma,\alpha)$ and $(B,G,\beta)$ by discrete groups, respectively. Recall that there is an action
$\alpha \otimes \beta$ of the group $\Gamma \times G$ on the maximal tensor product $A \otimes_{\max} B$ by automorphisms such that
$(\alpha \otimes \beta)_{(s,t)}=\alpha_{s} \otimes_{\max} \beta_{t}$ for every $(s,t)\in \Gamma\times G$. Then, the corresponding
group crossed product $(A \otimes_{\max} B)\rtimes_{\alpha \otimes \beta} (\Gamma \times G)$ can be decomposed as the maximal tensor
product of the crossed products $A \times_{\alpha} \Gamma$ and $B\times_{\beta} G$. More precisely, there is an isomorphism
$$\Pi:((A \otimes_{\max} B)\rtimes_{\alpha \otimes \beta} (\Gamma \times G),i)\rightarrow
(A \rtimes_{\alpha} \Gamma) \otimes_{\max} (B\rtimes_{\beta} G)$$
such that
$$\Pi\big(i_{(A\otimes_{\max} B)}(a \otimes b)i_{\Gamma\times G}(s,t)\big)=i_{A}(a)i_{\Gamma}(s)\otimes i_{B}(b)i_{G}(t)$$
for all $a\in A$, $b\in B$, and $(s,t)\in \Gamma\times G$.
Now, if in particular $\Gamma=G$, then one can see that the map $\gamma:G \times G\rightarrow \Aut(A \otimes_{\max} B)$
defined by
$$\gamma_{(s,t)}=(\alpha \otimes \beta)_{(t,s)}=\alpha_{t} \otimes_{\max} \beta_{s}\ \ \textrm{for all}\ (s,t)\in G \times G$$
is an action of the group $G \times G$ on the algebra $A \otimes_{\max} B$ by automorphisms. If
$((A \otimes_{\max} B)\rtimes_{\gamma} (G \times G),k)$ is the group crossed product of the system $(A \otimes_{\max} B, G \times G, \gamma)$,
then it is not difficult to see that it is isomorphic to the crossed product
$((A \otimes_{\max} B)\rtimes_{\alpha \otimes \beta} (G \times G),i)$ via an isomorphism $\Pi_{2}$,
such that
$$\Pi_{2}\big(k_{(A\otimes_{\max} B)}(a \otimes b)k_{G \times G}(s,t)\big)
=i_{(A\otimes_{\max} B)}(a \otimes b)i_{G \times G}(t,s)$$ for all $a\in A$, $b\in B$, and $s,t\in G$.
Therefore, the composition
 $$(A \otimes_{\max} B)\rtimes_{\gamma} (G \times G)\stackrel{\Pi_{2}}{\longrightarrow}
 (A \otimes_{\max} B)\rtimes_{\alpha \otimes \beta} (G \times G)\stackrel{\Pi}{\longrightarrow}
(A \rtimes_{\alpha} G,i) \otimes_{\max} (B\rtimes_{\beta} G,j)$$
of isomorphisms gives an isomorphism
$$\Pi_{3}:((A \otimes_{\max} B)\rtimes_{\gamma} (G \times G),k)\rightarrow
(A \rtimes_{\alpha} G,i) \otimes_{\max} (B\rtimes_{\beta} G,j)$$
such that
$$\Pi_{3}\big(k_{(A\otimes_{\max} B)}(a \otimes b)k_{G \times G}(s,t)\big)=i_{A}(a)i_{G}(t)\otimes j_{B}(b)j_{G}(s)$$
for all $a\in A$, $b\in B$, and $s,t\in G$.
\end{remark}

\begin{remark}
\label{I+J CP}
Let $(A \rtimes_{\alpha} G,i)$ be the group crossed product of a dynamical system $(A,G,\alpha)$
by discrete group. If $I$ and $J$ are two $\alpha$-invariant ideals of the algebra $A$, then one can compute on spanning elements
to see that we have
\begin{align}
\label{I+J CP2}
(I \rtimes_{\alpha} G)+(J \rtimes_{\alpha} G)=(I+J) \rtimes_{\alpha} G
\end{align}
and
\begin{align}
\label{IJ CP2}
(I \rtimes_{\alpha} G) \cap (J \rtimes_{\alpha} G)=(I\cap J) \rtimes_{\alpha} G.
\end{align}
\end{remark}

\begin{theorem}
\label{J crossed Z2}
Consider the (essential) ideals $\J_{\delta}$, $\J_{\gamma}$ and $\J$ of the algebra $\B$ in the group dynamical system
$(\B,\Z^{2},\beta)$ induced by the system $(A,\N^{2},\alpha)$. The ideals $\J_{\delta}$ and $\J_{\gamma}$ are $\beta$-invariant, such that
\begin{align}
\label{J*Z2=}
(\J_{\delta}\rtimes_{\beta} \Z^{2})+(\J_{\gamma}\rtimes_{\beta} \Z^{2})=\J\rtimes_{\beta} \Z^{2},
\end{align}
and
\begin{align}
\label{C0*Z2=}
(\J_{\delta}\rtimes_{\beta} \Z^{2})\cap (\J_{\gamma}\rtimes_{\beta} \Z^{2})\simeq\K(\ell^{2}(\Z^{2}))\otimes A
\end{align}
which is an essential ideal of $(\B\rtimes_{\beta} \Z^{2},j)$. In addition, the ideals
$\J_{\delta}\rtimes_{\beta} \Z^{2}$ and $\J_{\gamma}\rtimes_{\beta} \Z^{2}$ of $\B\rtimes_{\beta} \Z^{2}$ are essentials, and
isomorphic to the algebras $\K(\ell^{2}(\Z)) \otimes (D\rtimes_{\tau} \Z)$ and $\K(\ell^{2}(\Z)) \otimes (C\rtimes_{\tau} \Z)$
of compact operators, respectively, where $\tau$ denotes the action of $\Z$ on the subalgebras $D$ and $C$ of $\ell^{\infty}(\Z,A)$
by the left translation.

\end{theorem}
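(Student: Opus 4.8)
The plan is to deduce the statement formally from Theorem \ref{J for Z2}, the isomorphisms $\Lambda_{\delta},\Lambda_{\gamma}$ of Lemma \ref{B=Bdelta=Bgamma}, and Remarks \ref{tensor CP} and \ref{I+J CP}, using only the standard facts that $C_{0}(\Z)\rtimes_{\lt}\Z\simeq\K(\ell^{2}(\Z))$ for the translation action $\lt$, and that a crossed product commutes with a tensor factor on which the action is trivial. First I would check $\beta$-invariance: since $\beta_{t}\circ\phi_{s}=\phi_{t+s}$, applying $\beta_{(p,q)}$ to a spanning element $\phi_{(m,n)}(a)-\phi_{(t,n)}(\gamma_{t-m}(a))$ of $\J_{\delta}$ produces $\phi_{(m+p,n+q)}(a)-\phi_{(t+p,n+q)}(\gamma_{t-m}(a))$, which has the same form (the first coordinates still differ by $t-m$, the second agree), and symmetrically for $\J_{\gamma}$; hence $\J_{\delta}\rtimes_{\beta}\Z^{2}$ and $\J_{\gamma}\rtimes_{\beta}\Z^{2}$ are ideals of $\B\rtimes_{\beta}\Z^{2}$. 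Then (\ref{J*Z2=}) and the identity $(\J_{\delta}\rtimes_{\beta}\Z^{2})\cap(\J_{\gamma}\rtimes_{\beta}\Z^{2})=(C_{0}(\Z^{2})\otimes A)\rtimes_{\beta}\Z^{2}$ follow at once from (\ref{I+J CP2}) and (\ref{IJ CP2}) of Remark \ref{I+J CP}, using $\J=\J_{\delta}+\J_{\gamma}$ and $\J_{\delta}\cap\J_{\gamma}=C_{0}(\Z^{2})\otimes A$ from Theorem \ref{J for Z2}.

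Next I would identify the three pieces. For the intersection, $\beta$ restricted to $C_{0}(\Z^{2})\otimes A\simeq C_{0}(\Z^{2},A)\subset\ell^{\infty}(\Z^{2},A)$ acts by $(\beta_{t}f)(s)=f(s-t)$, i.e. as the translation action $\lt$ on $C_{0}(\Z^{2})$ tensored with the identity on $A$; since the action ignores the factor $A$, we get $(C_{0}(\Z^{2})\otimes A)\rtimes_{\beta}\Z^{2}\simeq(C_{0}(\Z^{2})\rtimes_{\lt}\Z^{2})\otimes A\simeq\K(\ell^{2}(\Z^{2}))\otimes A$ (one tensor factor is nuclear, so there is no ambiguity), which is (\ref{C0*Z2=}).

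For the two summands I would transport the problem along $\Lambda_{\delta}$. Since $\Lambda_{\delta}(\phi_{(m,n)}(a))=\Delta_{(m,n)}(a)$ and $\beta_{(p,q)}\phi_{(m,n)}(a)=\phi_{(m+p,n+q)}(a)$, the conjugated action $\beta':=\Lambda_{\delta}\circ\beta\circ\Lambda_{\delta}^{-1}$ on $\B_{\delta}$ satisfies $\beta'_{(p,q)}\Delta_{(m,n)}(a)=\Delta_{(m+p,n+q)}(a)$, and $\Lambda_{\delta}$ carries $\J_{\delta}$ onto $C_{0}(\Z)\otimes D$. By Proposition \ref{C0 in Bgam-Bdel} the element $\Delta_{(m,n)}(a)-\Delta_{(m+1,n)}(\gamma(a))$ is the function $\Z\to D$ supported at $m$ with value $\varphi_{n}(a)$; writing $e_{m}\in C_{0}(\Z)$ for the point mass at $m$ and identifying $C_{0}(\Z)\otimes D\simeq C_{0}(\Z,D)$, this spanning element is $e_{m}\otimes\varphi_{n}(a)$, and $\beta'_{(p,q)}$ sends it to $e_{m+p}\otimes\varphi_{n+q}(a)=\lt_{p}(e_{m})\otimes\tau_{q}(\varphi_{n}(a))$. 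Thus $\beta'$ restricted to $C_{0}(\Z)\otimes D$ is the product action $\lt\otimes\tau$ of $\Z^{2}=\Z\times\Z$, the first factor acting by translation on $C_{0}(\Z)$ and the second by the left-translation action $\tau$ on $D$ (as in the statement). Remark \ref{tensor CP} applied to $(C_{0}(\Z),\Z,\lt)$ and $(D,\Z,\tau)$ then gives
$$\J_{\delta}\rtimes_{\beta}\Z^{2}\simeq(C_{0}(\Z)\otimes D)\rtimes_{\lt\otimes\tau}(\Z\times\Z)\simeq(C_{0}(\Z)\rtimes_{\lt}\Z)\otimes(D\rtimes_{\tau}\Z)\simeq\K(\ell^{2}(\Z))\otimes(D\rtimes_{\tau}\Z),$$
nuclearity of $C_{0}(\Z)$ and $\K(\ell^{2}(\Z))$ letting us use the spatial tensor product throughout; the same argument with $\Lambda_{\gamma}$ in place of $\Lambda_{\delta}$ gives $\J_{\gamma}\rtimes_{\beta}\Z^{2}\simeq\K(\ell^{2}(\Z))\otimes(C\rtimes_{\tau}\Z)$. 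Finally, $\J_{\delta}$, $\J_{\gamma}$ and $C_{0}(\Z^{2})\otimes A$ are essential $\beta$-invariant ideals of $\B$ by Theorem \ref{J for Z2}, so their crossed products by $\Z^{2}$ are essential ideals of $\B\rtimes_{\beta}\Z^{2}$ by \cite[Proposition 2.4]{Kusuda} (alternatively, the intersection of the two essential ideals $\J_{\delta}\rtimes_{\beta}\Z^{2}$ and $\J_{\gamma}\rtimes_{\beta}\Z^{2}$ is automatically essential).

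The formal parts---$\beta$-invariance, the sum and intersection identities, and essentiality---are routine. The one step requiring genuine care is checking that $\Lambda_{\delta}$ intertwines $\beta|_{\J_{\delta}}$ with the product action $\lt\otimes\tau$ on $C_{0}(\Z)\otimes D$: one must unwind the point-mass description of $C_{0}(\Z)\otimes D$ inside $\B_{\delta}\subset\ell^{\infty}(\Z,D)$ supplied by Proposition \ref{C0 in Bgam-Bdel} and see that the first $\Z$-coordinate of $\Z^{2}$ becomes the translation on $C_{0}(\Z)$ while the second becomes $\tau$ on $D$ (and likewise the parallel check for $\Lambda_{\gamma}$ and $C_{0}(\Z)\otimes C$, with the two coordinates---and the roles of $\gamma$ and $\delta$---interchanged). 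Once that is in place, Remark \ref{tensor CP} finishes the proof.
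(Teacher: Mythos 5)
Your proposal is correct and follows essentially the same route as the paper: Remark \ref{I+J CP} for the sum and intersection identities, \cite[Proposition 2.4]{Kusuda} for essentiality, and the equivariance of $\Lambda_{\delta}$ (resp.\ $\Lambda_{\gamma}$, with the coordinates of $\Z^{2}$ swapped) combined with Remark \ref{tensor CP} and $C_{0}(\Z)\rtimes_{\lt}\Z\simeq\K(\ell^{2}(\Z))$ to identify the two summands. The only difference is that the paper additionally records explicit formulas for the resulting isomorphisms on spanning elements (its $\Psi_{6},\Psi_{7}$), which are needed later but not for the theorem itself.
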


\begin{proof}
Firstly, one can easily see that the essential ideals $\J_{\delta}$ and $\J_{\gamma}$ of $\B$ are $\beta$-invariant,
and hence, the algebras $\J_{\delta}\rtimes_{\beta} \Z^{2}$  and $\J_{\gamma}\rtimes_{\beta} \Z^{2}$ are essential ideals of
$\B \rtimes_{\beta} \Z^{2}$ (see \cite[Proposition 2.4]{Kusuda}). Now, the equation (\ref{J*Z2=}) follows immediately by
(\ref{I+J CP2}) in Remark \ref{I+J CP} as $\J_{\delta}+\J_{\gamma}=\J$ (see Theorem \ref{J for Z2}).
It thus follows that $\J\rtimes_{\beta} \Z^{2}$ is indeed spanned by the elements of the form
$$\big[j_{\B}\big(\phi_{(m,n)}(a)-\phi_{(m+1,n)}(\gamma(a))\big)j_{\Z^{2}}(x,y)\big]+
\big[j_{\B}\big(\phi_{(t,u)}(b)-\phi_{(t,u+1)}(\delta(b))\big)j_{\Z^{2}}(r,s)\big],$$
where $a,b\in A$ and $(m,n), (x,y), (t,u), (r,s)\in \Z^{2}$.
To see (\ref{C0*Z2=}), we apply (\ref{IJ CP2}) in Remark \ref{I+J CP}, and since $\J_{\delta}\cap \J_{\gamma}=C_{0}(\Z^{2})\otimes A$
(see Theorem \ref{J for Z2}), we have
$$(\J_{\delta}\rtimes_{\beta} \Z^{2})\cap (\J_{\gamma}\rtimes_{\beta} \Z^{2})=(\J_{\delta}\cap \J_{\gamma})\rtimes_{\beta} \Z^{2}
=(C_{0}(\Z^{2})\otimes A)\rtimes_{\beta} \Z^{2},$$
where it is known that the crossed product $(C_{0}(\Z^{2})\otimes A)\rtimes_{\beta} \Z^{2}$ is isomorphic to the algebra
$$\K(\ell^{2}(\Z^{2})\otimes A)\simeq \K(\ell^{2}(\Z^{2}))\otimes A \simeq \K(\ell^{2}(\Z))\otimes \K(\ell^{2}(\Z))\otimes A$$
of compact operators. Moreover, since $C_{0}(\Z^{2})\otimes A$ is an essential ideal of $\B$,
$(C_{0}(\Z^{2})\otimes A)\rtimes_{\beta} \Z^{2}\simeq \K(\ell^{2}(\Z^{2}))\otimes A$ is an essential ideal of
$\B\rtimes_{\beta} \Z^{2}$ (again by \cite[Proposition 2.4]{Kusuda}).

Next, let $\lt$ denote the action of $\Z$ on $C_{0}(\Z)$ by the left translation. Then, consider the action
$\lt \otimes \tau$ of $\Z^{2}$ on the algebra $C_{0}(\Z)\otimes D$, and let $\sigma$ be the action of $\Z^{2}$ on the algebra
$C_{0}(\Z)\otimes C$ defined by
$$\sigma_{(m,n)}=(\lt \otimes \tau)_{(n,m)}=\lt_{n} \otimes \tau_{m}$$
for all $(m,n)\in \Z^{2}$ (see remark \ref{tensor CP}).
Now, we have
$$(\lt \otimes \tau)\circ \Lambda_{\delta}=\Lambda_{\delta} \circ \beta
\ \textrm{and}\ \sigma\circ \Lambda_{\gamma}=\Lambda_{\gamma} \circ \beta,$$
from which, it follows that the systems $(\J_{\delta}, \Z^{2}, \beta)$ and $(C_{0}(\Z)\otimes D, \Z^{2}, \lt \otimes \tau)$
are equivariantly isomorphic as well as $(\J_{\gamma}, \Z^{2}, \beta)$ and $(C_{0}(\Z)\otimes C, \Z^{2}, \sigma)$.
Therefore, by \cite[Lemma 2.65]{W}, the crossed products $\J_{\delta}\rtimes_{\beta} \Z^{2}$ and $\J_{\gamma}\rtimes_{\beta} \Z^{2}$
are isomorphic to $(C_{0}(\Z) \otimes D)\rtimes_{\lt \otimes \tau} \Z^{2}$ and $(C_{0}(\Z)\otimes C)\rtimes_{\sigma} \Z^{2}$, respectively.
To be more precise, there are isomorphisms
$$\Psi_{1}:(\J_{\delta}\rtimes_{\beta} \Z^{2},j)\rightarrow ((C_{0}(\Z) \otimes D)\rtimes_{\lt \otimes \tau} \Z^{2},k)$$
and
$$\Psi_{2}:(\J_{\gamma}\rtimes_{\beta} \Z^{2},j)\rightarrow ((C_{0}(\Z) \otimes C)\rtimes_{\sigma} \Z^{2},i)$$
such that
$$\Psi_{1}(j_{\B}(\xi)j_{\Z^{2}}(x,y))=k_{(C_{0}(\Z) \otimes D)}(\Lambda_{\delta}(\xi))k_{\Z^{2}}(x,y)$$
and
$$\Psi_{2}(j_{\B}(\eta)j_{\Z^{2}}(x,y))=i_{(C_{0}(\Z) \otimes C)}(\Lambda_{\gamma}(\eta))i_{\Z^{2}}(x,y)$$
for all $\xi\in \J_{\delta}$, $\eta\in \J_{\gamma}$, and $(x,y)\in \Z^{2}$.
Moreover, the crossed products $(C_{0}(\Z) \otimes D)\rtimes_{\lt \otimes \tau} \Z^{2}$ and $(C_{0}(\Z) \otimes C)\rtimes_{\sigma} \Z^{2}$
are decomposed as the tensor products $(C_{0}(\Z) \rtimes_{\lt} \Z) \otimes (D\rtimes_{\tau} \Z)$ and
$(C_{0}(\Z) \rtimes_{\lt} \Z) \otimes (C\rtimes_{\tau} \Z)$, respectively (see Remark \ref{tensor CP}), via the isomorphisms
$$\Psi_{3}:(C_{0}(\Z) \otimes D)\rtimes_{\lt \otimes \tau} \Z^{2}
\rightarrow (C_{0}(\Z) \rtimes_{\lt} \Z,i) \otimes (D\rtimes_{\tau} \Z,k)$$
and
$$\Psi_{4}:(C_{0}(\Z) \otimes C)\rtimes_{\sigma} \Z^{2}
\rightarrow (C_{0}(\Z) \rtimes_{\lt} \Z,i) \otimes (C\rtimes_{\tau} \Z,\tilde{i})$$
such that
$$\Psi_{3}\big(k_{(C_{0}(\Z) \otimes D)}(f\otimes \xi)k_{\Z^{2}}(x,y)\big)
=[i_{C_{0}(\Z)}(f)i_{\Z}(x)]\otimes [k_{D}(\xi)k_{\Z}(y)]$$
and
$$\Psi_{4}\big(i_{(C_{0}(\Z) \otimes C)}(f\otimes \eta)i_{\Z^{2}}(x,y)\big)
=[i_{C_{0}(\Z)}(f)i_{\Z}(y)]\otimes [\tilde{i}_{C}(\eta)\tilde{i}_{\Z}(x)]$$
for all $f\in C_{0}(\Z)$, $\xi\in D$, $\eta\in C$, and $x,y\in \Z$. Also, the crossed product $C_{0}(\Z) \rtimes_{\lt} \Z$
is isomorphic to the algebra $\K(\ell^{2}(\Z))$ of
compact operators on $\ell^{2}(\Z)$. The isomorphism is given by
$$\Psi_{5}:(C_{0}(\Z) \rtimes_{\lt} \Z,i)\rightarrow \K(\ell^{2}(\Z))$$ such that
$$\Psi_{5}(i_{C_{0}(\Z)}(f)i_{\Z}(x))=e_{r}\otimes \overline{e_{r-x}},$$ where $f=(...,0,0,1,0,0,...)\in C_{0}(\Z)$ with $1$
in $r$th slot, $\{e_{r}: r\in \Z\}$ is the usual orthonormal basis of $\ell^{2}(\Z)$, and $e_{r}\otimes \overline{e_{r-x}}$
is the rank-one operator on $\ell^{2}(\Z)$ defined by $h\mapsto \langle h\ |\ e_{r-x}\rangle e_{r}$ for all
$h\in \ell^{2}(\Z)$. Therefore, the composition
$$\J_{\delta}\rtimes_{\beta} \Z^{2}\stackrel{\Psi_{1}}{\longrightarrow}
(C_{0}(\Z) \otimes D)\rtimes_{\lt \otimes \tau} \Z^{2}\stackrel{\Psi_{3}}{\longrightarrow}
(C_{0}(\Z) \rtimes_{\lt} \Z) \otimes (D\rtimes_{\tau} \Z)\stackrel{\Psi_{5}\otimes \id}{\longrightarrow}
\K(\ell^{2}(\Z)) \otimes (D\rtimes_{\tau} \Z)$$
of isomorphisms gives an isomorphism
$$\Psi_{7}:\J_{\delta}\rtimes_{\beta} \Z^{2}\rightarrow \K(\ell^{2}(\Z)) \otimes (D\rtimes_{\tau} \Z)$$
such that
$$\Psi_{7}\big[ j_{\B}\big(\phi_{(m,n)}(a)-\phi_{(m+1,n)}(\gamma(a))\big)j_{\Z^{2}}(x,y) \big]
=\big[e_{m}\otimes \overline{e_{m-x}}\big] \otimes \big[k_{D}(\varphi_{n}(a)) k_{\Z}(y)\big]$$
for all $a\in A$ and $(m,n),(x,y)\in\Z^{2}$. Note that, the restriction of $\Psi_{7}$ to the
ideal $(C_{0}(\Z^{2})\otimes A)\rtimes_{\beta} \Z^{2}$ of $\J_{\delta}\rtimes_{\beta} \Z^{2}$ is the canonical isomorphism of the
crossed product $(C_{0}(\Z^{2})\otimes A)\rtimes_{\beta} \Z^{2}$ onto the algebra $\K(\ell^{2}(\Z))\otimes (\K(\ell^{2}(\Z))\otimes A)$
of compact operators.

Similarly, the composition $(\Psi_{5}\otimes \id)\circ \Psi_{4}\circ \Psi_{2}$ of isomorphisms gives an isomorphism
$$\Psi_{6}:\J_{\gamma}\rtimes_{\beta} \Z^{2}\rightarrow \K(\ell^{2}(\Z)) \otimes (C\rtimes_{\tau} \Z)$$
such that
$$\Psi_{6}\big[ j_{\B}\big(\phi_{(t,u)}(a)-\phi_{(t,u+1)}(\delta(a))\big)j_{\Z^{2}}(r,s) \big]
=\big[e_{u}\otimes \overline{e_{u-s}}\big] \otimes \big[\tilde{i}_{C}(\psi_{t}(a))\tilde{i}_{\Z}(r)\big]$$
for all $a\in A$ and $(t,u),(r,s)\in\Z^{2}$.

\end{proof}

We are now ready to import the information from $\B\rtimes_{\beta} \Z^{2}$ to the crossed product $A\times_{\alpha}^{\piso} \N^{2}$
in order to ge the desired composition series (\ref{compose-1}) of ideals and identify the subquotients with familiar algebras.

\begin{lemma}
\label{I=Id+Ig}
Let
$$\I_{\delta}=\clsp\big\{\xi_{(m,n)}^{(x,y)}(a): m,n,x,y\in \N, a\in A\big\}$$
and
$$\I_{\gamma}=\clsp\big\{\eta_{(m,n)}^{(x,y)}(a): m,n,x,y\in \N, a\in A\big\},$$
where
$$\xi_{(m,n)}^{(x,y)}(a):=i_{\N^{2}}(m,n)^{*} i_{A}(a) [1-i_{\N^{2}}(1,0)^{*}i_{\N^{2}}(1,0)]i_{\N^{2}}(x,y)$$
and
$$\eta_{(m,n)}^{(x,y)}(a):=i_{\N^{2}}(m,n)^{*} i_{A}(a) [1-i_{\N^{2}}(0,1)^{*}i_{\N^{2}}(0,1)]i_{\N^{2}}(x,y).$$
Then, $\I_{\delta}$ and $\I_{\gamma}$ are essential ideals of $A\times_{\alpha}^{\piso} \N^{2}$ such that
$\I_{\delta}+ \I_{\gamma}=\ker q$. Moreover, we have
\begin{align}
\label{xi-xi=eta-eta}
\xi_{(m,n)}^{(x,y)}(a)-\xi_{(m,n+1)}^{(x,y+1)}(\delta(a))=\eta_{(m,n)}^{(x,y)}(a)-\eta_{(m+1,n)}^{(x+1,y)}(\gamma(a))
\end{align}
for all $a\in A$ and $m,n,x,y\in \N$, and
\begin{align}
\label{Id.Ig-span}
\I_{\delta}\cap \I_{\gamma}=\clsp\big\{\xi_{(m,n)}^{(x,y)}(a)-\xi_{(m,n+1)}^{(x,y+1)}(\delta(a)): m,n,x,y\in \N, a\in A\big\}
\end{align}
which is also an essential ideal of $A\times_{\alpha}^{\piso} \N^{2}$.
\end{lemma}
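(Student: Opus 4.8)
The plan is to transport the entire statement across the isomorphism $\Psi$ of \cite[Theorem 4.1]{SZ2}, which realizes $A\times_{\alpha}^{\piso}\N^{2}$ as the full corner $p(\B\rtimes_{\beta}\Z^{2})p$, and to feed in Theorems \ref{J for Z2} and \ref{J crossed Z2}. Since the corner is full, $p$ is a full projection of $\B\rtimes_{\beta}\Z^{2}$, so $I\mapsto pIp$ is a lattice isomorphism from the ideals of $\B\rtimes_{\beta}\Z^{2}$ onto those of $p(\B\rtimes_{\beta}\Z^{2})p$ that preserves sums, intersections and essentiality. Thus I would show that $\I_{\delta}$ and $\I_{\gamma}$ coincide with $\Psi^{-1}(p(\J_{\delta}\rtimes_{\beta}\Z^{2})p)$ and $\Psi^{-1}(p(\J_{\gamma}\rtimes_{\beta}\Z^{2})p)$; granting this, both are essential ideals of $A\times_{\alpha}^{\piso}\N^{2}$ by Theorem \ref{J crossed Z2}, and everything else will follow formally.

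To identify $\I_{\delta}$, feed $s=(1,0)$ into (\ref{psi-2}) and use $\alpha_{(1,0)}=\gamma$ to obtain
\[
\Psi\big(\xi_{(m,n)}^{(x,y)}(a)\big)=p\,j_{\Z^{2}}(m,n)\,j_{\B}\big(\phi_{(0,0)}(a)-\phi_{(1,0)}(\gamma(a))\big)\,j_{\Z^{2}}(x,y)^{*}\,p,
\]
and likewise $\Psi(\eta_{(m,n)}^{(x,y)}(a))=p\,j_{\Z^{2}}(m,n)\,j_{\B}(\phi_{(0,0)}(a)-\phi_{(0,1)}(\delta(a)))\,j_{\Z^{2}}(x,y)^{*}\,p$. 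By Theorem \ref{J for Z2}, $\phi_{(0,0)}(a)-\phi_{(1,0)}(\gamma(a))\in\J_{\delta}$ and $\phi_{(0,0)}(a)-\phi_{(0,1)}(\delta(a))\in\J_{\gamma}$, so the right-hand sides lie in $p(\J_{\delta}\rtimes_{\beta}\Z^{2})p$ and $p(\J_{\gamma}\rtimes_{\beta}\Z^{2})p$, giving the inclusions $\clsp\{\xi_{(m,n)}^{(x,y)}(a)\}\subseteq\Psi^{-1}(p(\J_{\delta}\rtimes_{\beta}\Z^{2})p)$ and $\clsp\{\eta_{(m,n)}^{(x,y)}(a)\}\subseteq\Psi^{-1}(p(\J_{\gamma}\rtimes_{\beta}\Z^{2})p)$. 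The reverse inclusions amount to showing that a general spanning element $p\,j_{\Z^{2}}(u)\,j_{\B}(\zeta)\,j_{\Z^{2}}(v)^{*}\,p$ of $p(\J_{\delta}\rtimes_{\beta}\Z^{2})p$, with $u,v\in\Z^{2}$ and $\zeta$ a generator of $\J_{\delta}$, already lies in the closed span of the $\Psi(\xi_{(m,n)}^{(x,y)}(a))$ with indices in $\N$; this is the $\N^{2}$-indexing reduction carried out in \cite[Proposition 2.4, Theorem 4.1, Lemma 4.2]{SZ2}, now applied to the sub-family of generators built from $1-i_{\N^{2}}(1,0)^{*}i_{\N^{2}}(1,0)$ (resp. $1-i_{\N^{2}}(0,1)^{*}i_{\N^{2}}(0,1)$), and is the one genuinely technical step. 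It is eased by the observation that $i_{\N^{2}}(m,n)(1-i_{\N^{2}}(1,0)^{*}i_{\N^{2}}(1,0))=0$ whenever $m\ge1$ — since (\ref{Nica-1}) then forces $i_{\N^{2}}(m,n)^{*}i_{\N^{2}}(m,n)\le i_{\N^{2}}(1,0)^{*}i_{\N^{2}}(1,0)$ — together with the telescoping identity
\[
1-i_{\N^{2}}(j,0)^{*}i_{\N^{2}}(j,0)=\sum_{r=0}^{j-1}i_{\N^{2}}(r,0)^{*}\big(1-i_{\N^{2}}(1,0)^{*}i_{\N^{2}}(1,0)\big)i_{\N^{2}}(r,0)
\]
and the covariance relation $i_{A}(b)\,i_{\N^{2}}(z)^{*}=i_{\N^{2}}(z)^{*}\,i_{A}(\alpha_{z}(b))$ from (\ref{cov2}). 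This gives $\I_{\delta}=\Psi^{-1}(p(\J_{\delta}\rtimes_{\beta}\Z^{2})p)$ and, symmetrically, $\I_{\gamma}=\Psi^{-1}(p(\J_{\gamma}\rtimes_{\beta}\Z^{2})p)$.

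The rest is bookkeeping with $\Psi$. Using (\ref{J*Z2=}) of Theorem \ref{J crossed Z2} and $\Psi(\ker q)=p(\J\rtimes_{\beta}\Z^{2})p$ (which is (\ref{psi-2}), \cite[Lemma 4.2]{SZ2}),
\[
\I_{\delta}+\I_{\gamma}=\Psi^{-1}\big(p(\J_{\delta}\rtimes_{\beta}\Z^{2})p+p(\J_{\gamma}\rtimes_{\beta}\Z^{2})p\big)=\Psi^{-1}\big(p(\J\rtimes_{\beta}\Z^{2})p\big)=\ker q.
\]
For (\ref{xi-xi=eta-eta}) I would apply the injective map $\Psi$ to both sides: using $j_{\Z^{2}}(0,1)j_{\B}(\cdot)j_{\Z^{2}}(0,1)^{*}=j_{\B}(\beta_{(0,1)}(\cdot))$ and its $(1,0)$-analogue, together with $\gamma(\delta(a))=\delta(\gamma(a))=\alpha_{(1,1)}(a)$, the left-hand side is sent to $p\,j_{\Z^{2}}(m,n)j_{\B}(g)j_{\Z^{2}}(x,y)^{*}p$ with $g=[\phi_{(0,0)}(a)-\phi_{(1,0)}(\gamma(a))]-[\phi_{(0,1)}(\delta(a))-\phi_{(1,1)}(\alpha_{(1,1)}(a))]$, and the right-hand side to the same expression with $g$ replaced by $h=[\phi_{(0,0)}(a)-\phi_{(0,1)}(\delta(a))]-[\phi_{(1,0)}(\gamma(a))-\phi_{(1,1)}(\alpha_{(1,1)}(a))]$; but $g=h$, both being the spanning element $f_{(0,0)}^{a}$ of $C_{0}(\Z^{2})\otimes A$ (compare (\ref{C0-in-Jd}) and (\ref{C0-in-Jg})), so (\ref{xi-xi=eta-eta}) follows.

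Finally, since $\Psi$ is an isomorphism and $p(\cdot)p$ respects finite intersections of ideals,
\[
\I_{\delta}\cap\I_{\gamma}=\Psi^{-1}\Big(p\big((\J_{\delta}\rtimes_{\beta}\Z^{2})\cap(\J_{\gamma}\rtimes_{\beta}\Z^{2})\big)p\Big),
\]
which by (\ref{C0*Z2=}) of Theorem \ref{J crossed Z2} is a full corner in $\K(\ell^{2}(\Z^{2}))\otimes A$, hence an essential ideal of $A\times_{\alpha}^{\piso}\N^{2}$. The span description (\ref{Id.Ig-span}) then follows from the identity $\Psi(\xi_{(m,n)}^{(x,y)}(a)-\xi_{(m,n+1)}^{(x,y+1)}(\delta(a)))=p\,j_{\Z^{2}}(m,n)j_{\B}(f_{(0,0)}^{a})j_{\Z^{2}}(x,y)^{*}p$ just obtained, the fact that $C_{0}(\Z^{2})\otimes A=\clsp\{f_{(m,n)}^{a}\}$ with $f_{(m,n)}^{a}=\beta_{(m,n)}(f_{(0,0)}^{a})$, and the same $\N^{2}$-indexing reduction as in the second paragraph.
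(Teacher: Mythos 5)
Your proposal is correct and follows essentially the same route as the paper: both arguments define $\I_{\delta}$ and $\I_{\gamma}$ as the pullbacks under $\Psi$ of the full corners $p(\J_{\delta}\rtimes_{\beta}\Z^{2})p$ and $p(\J_{\gamma}\rtimes_{\beta}\Z^{2})p$, import essentiality and the sum/intersection identities from Theorems \ref{J for Z2} and \ref{J crossed Z2}, and defer the same $\N^{2}$-indexing reduction to \cite[Lemma 4.2]{SZ2}. The only differences are in execution: for (\ref{xi-xi=eta-eta}) the paper computes directly with the projections $1-i_{\N^{2}}(1,0)^{*}i_{\N^{2}}(1,0)$ and $1-i_{\N^{2}}(0,1)^{*}i_{\N^{2}}(0,1)$ inside $A\times_{\alpha}^{\piso}\N^{2}$ rather than pushing both sides through $\Psi$ as you do, and for (\ref{Id.Ig-span}) the paper multiplies spanning elements $\xi_{(m,n)}^{(x,y)}(a)\eta_{(r,s)}^{(t,u)}(b)$ explicitly via (\ref{span-Jd.Jg}) where you invoke the lattice (Rieffel) correspondence for the full corner together with (\ref{C0*Z2=}) --- both variants are valid and lead to the same conclusion.
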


\begin{proof}
Firstly, it follows from Theorem \ref{J crossed Z2} that
\begin{align}
\label{eq25}
p(\J\rtimes_{\beta} \Z^{2})p=p(\J_{\delta}\rtimes_{\beta} \Z^{2}+\J_{\gamma}\rtimes_{\beta} \Z^{2})p
=p(\J_{\delta}\rtimes_{\beta} \Z^{2})p+p(\J_{\gamma}\rtimes_{\beta} \Z^{2})p,
\end{align}
where $p=\overline{j_{\B}}(\overline{\phi}_{(0,0)}(1))$ (see \S\ref{sec:pre}). Then, by exactly the same computation done in
\cite[Lemma 4.2]{SZ2}, one can see that the corners $p(\J_{\delta}\rtimes_{\beta} \Z^{2})p$ and $p(\J_{\gamma}\rtimes_{\beta} \Z^{2})p$
are full. Moreover, they are indeed essential ideals of the algebra (full corner) $p(\B\times_{\beta} \Z^{2})p$.
Now, by applying the isomorphism $\Psi$ of $A\times_{\alpha}^{\piso} \N^{2}$ onto $p(\B\rtimes_{\beta} \Z^{2})p$
(see \cite[Theorem 4.1]{SZ2} or \S\ref{sec:pre}), it follows that
$$\I_{\delta}:=\Psi^{-1}(p(\J_{\delta}\rtimes_{\beta} \Z^{2})p)\ \ \textrm{and}\ \
\I_{\gamma}:=\Psi^{-1}(p(\J_{\gamma}\rtimes_{\beta} \Z^{2})p)$$
are two essential ideals of $A\times_{\alpha}^{\piso} \N^{2}$, such that
$$\ker q=\Psi^{-1}(p(\J\rtimes_{\beta} \Z^{2})p)=\Psi^{-1}(p(\J_{\delta}\rtimes_{\beta} \Z^{2})p)+
\Psi^{-1}(p(\J_{\gamma}\rtimes_{\beta} \Z^{2})p)=\I_{\delta}+ \I_{\gamma}.$$

Next, we show that the ideal $\I_{\delta}$ is spanned by the elements $\xi_{(m,n)}^{(x,y)}(a)$ and skip the discussion
on $\I_{\gamma}$ as it follows similarly. To do so, first note that the algebra $p(\J_{\delta}\rtimes_{\beta} \Z^{2})p$
is spanned by elements of the form
$$p\big[j_{\B}\big(\phi_{(m,n)}(a)-\phi_{(m+1,n)}(\gamma(a))\big)j_{\Z^{2}}(x,y)\big]p,$$ where $a\in A$ and
$(m,n),(x,y)\in \Z^{2}$. However, again by exactly the same computation available in \cite[Lemma 4.2]{SZ2}
along with applying the covariance equation of the pair $(j_{\B},j_{\Z^{2}})$, it follows that
$p(\J_{\delta}\rtimes_{\beta} \Z^{2})p$ is precisely spanned by the elements
\begin{align}
\label{eq24}
p\big[j_{\Z^{2}}(m,n)j_{\B}\big(\phi_{(0,0)}(a)-\phi_{(1,0)}(\gamma(a))\big)j_{\Z^{2}}(x,y)^{*}\big]p,
\end{align}
where $a\in A$ and $(m,n),(x,y)\in \N^{2}$. Therefore, elements of the form
\begin{eqnarray*}
\begin{array}{l}
\Psi^{-1}\big(p[j_{\Z^{2}}(m,n)j_{\B}\big(\phi_{(0,0)}(a)-\phi_{(1,0)}(\gamma(a))\big)j_{\Z^{2}}(x,y)^{*}]p\big)\\
=i_{\N^{2}}(m,n)^{*} i_{A}(a) [1-i_{\N^{2}}(1,0)^{*}i_{\N^{2}}(1,0)]i_{\N^{2}}(x,y)
=\xi_{(m,n)}^{(x,y)}(a)\ (\textrm{see \cite[Lemma 4.2]{SZ2}})
\end{array}
\end{eqnarray*}
span the ideal $\Psi^{-1}(p(\J_{\delta}\rtimes_{\beta} \Z^{2})p)=\I_{\delta}$. Consequently, the ideal $\ker q$ is indeed spanned by the elements
\begin{align}
\label{span-I(2)}
\xi_{(m,n)}^{(x,y)}(a)+\eta_{(r,s)}^{(t,u)}(b).
\end{align}

To see the equation (\ref{xi-xi=eta-eta}), for convenience, let
$$P_{(1,0)}=1-i_{\N^{2}}(1,0)^{*}i_{\N^{2}}(1,0)\ \ \textrm{and}\ \ P_{(0,1)}=1-i_{\N^{2}}(0,1)^{*}i_{\N^{2}}(0,1),$$
which are two projections in $\M(A\times_{\alpha}^{\piso} \N^{2})$.
Now, by applying the covariance equation of the pair $(i_{A},i_{\N^{2}})$, we have
\begin{eqnarray*}
\begin{array}{l}
\xi_{(m,n)}^{(x,y)}(a)-\xi_{(m,n+1)}^{(x,y+1)}(\delta(a))\\
=i_{\N^{2}}(m,n)^{*} i_{A}(a)P_{(1,0)}i_{\N^{2}}(x,y)-i_{\N^{2}}(m,n+1)^{*} i_{A}(\delta(a))P_{(1,0)}i_{\N^{2}}(x,y+1)\\
=i_{\N^{2}}(m,n)^{*} i_{A}(a)P_{(1,0)}i_{\N^{2}}(x,y)-i_{\N^{2}}(m,n)^{*}i_{\N^{2}}(0,1)^{*} i_{A}(\delta(a))P_{(1,0)}i_{\N^{2}}(0,1)i_{\N^{2}}(x,y)\\
=i_{\N^{2}}(m,n)^{*} i_{A}(a)P_{(1,0)}i_{\N^{2}}(x,y)-i_{\N^{2}}(m,n)^{*}i_{A}(a)i_{\N^{2}}(0,1)^{*}P_{(1,0)}i_{\N^{2}}(0,1)i_{\N^{2}}(x,y)\\
=i_{\N^{2}}(m,n)^{*} i_{A}(a)\big[P_{(1,0)}-i_{\N^{2}}(0,1)^{*}P_{(1,0)}i_{\N^{2}}(0,1)\big]i_{\N^{2}}(x,y).\\
\end{array}
\end{eqnarray*}
Then, in the bottom line, for $\big[P_{(1,0)}-i_{\N^{2}}(0,1)^{*}P_{(1,0)}i_{\N^{2}}(0,1)\big]$, we have
\begin{eqnarray*}
\begin{array}{l}
P_{(1,0)}-i_{\N^{2}}(0,1)^{*}P_{(1,0)}i_{\N^{2}}(0,1)\\
=[1-i_{\N^{2}}(1,0)^{*}i_{\N^{2}}(1,0)]-i_{\N^{2}}(0,1)^{*}[1-i_{\N^{2}}(1,0)^{*}i_{\N^{2}}(1,0)]i_{\N^{2}}(0,1)\\
=1-i_{\N^{2}}(1,0)^{*}i_{\N^{2}}(1,0)-i_{\N^{2}}(0,1)^{*}i_{\N^{2}}(0,1)+i_{\N^{2}}(1,1)^{*}i_{\N^{2}}(1,1)\\
=[1-i_{\N^{2}}(0,1)^{*}i_{\N^{2}}(0,1)]-[i_{\N^{2}}(1,0)^{*}i_{\N^{2}}(1,0)-i_{\N^{2}}(1,1)^{*}i_{\N^{2}}(1,1)]\\
=[1-i_{\N^{2}}(0,1)^{*}i_{\N^{2}}(0,1)]-i_{\N^{2}}(1,0)^{*}[1-i_{\N^{2}}(0,1)^{*}i_{\N^{2}}(0,1)]i_{\N^{2}}(1,0)\\
=P_{(0,1)}-i_{\N^{2}}(1,0)^{*}P_{(0,1)}i_{\N^{2}}(1,0).\\
\end{array}
\end{eqnarray*}
Therefore, it follows that
\begin{eqnarray*}
\begin{array}{l}
\xi_{(m,n)}^{(x,y)}(a)-\xi_{(m,n+1)}^{(x,y+1)}(\delta(a))\\
=i_{\N^{2}}(m,n)^{*} i_{A}(a)\big[P_{(0,1)}-i_{\N^{2}}(1,0)^{*}P_{(0,1)}i_{\N^{2}}(1,0)\big]i_{\N^{2}}(x,y)\\
=i_{\N^{2}}(m,n)^{*} i_{A}(a)P_{(0,1)}i_{\N^{2}}(x,y)-i_{\N^{2}}(m,n)^{*} i_{A}(a)i_{\N^{2}}(1,0)^{*}P_{(0,1)}i_{\N^{2}}(1,0)i_{\N^{2}}(x,y)\\
=i_{\N^{2}}(m,n)^{*} i_{A}(a)P_{(0,1)}i_{\N^{2}}(x,y)-i_{\N^{2}}(m,n)^{*}i_{\N^{2}}(1,0)^{*}i_{A}(\gamma(a))P_{(0,1)}i_{\N^{2}}(x+1,y)\\
=i_{\N^{2}}(m,n)^{*} i_{A}(a)P_{(0,1)}i_{\N^{2}}(x,y)-i_{\N^{2}}(m+1,n)^{*}i_{A}(\gamma(a))P_{(0,1)}i_{\N^{2}}(x+1,y)\\
=\eta_{(m,n)}^{(x,y)}(a)-\eta_{(m+1,n)}^{(x+1,y)}(\gamma(a)).
\end{array}
\end{eqnarray*}

Finally, to see (\ref{Id.Ig-span}), it follows immediately by (\ref{xi-xi=eta-eta}) that the right hand side of (\ref{Id.Ig-span})
is contained in $\I_{\delta}\cap \I_{\gamma}$. To see the other inclusion, since
$\I_{\delta}\cap \I_{\gamma}=\I_{\delta} \I_{\gamma}$, it is enough to see that each product
$$\xi_{(m,n)}^{(x,y)}(a)\eta_{(r,s)}^{(t,u)}(b)$$
of the spanning elements of $\I_{\delta}$ and $\I_{\gamma}$ is in the right hand side of (\ref{Id.Ig-span}).
For convenience, first, let
$$\tilde{\xi}_{(m,n)}^{(x,y)}(a):=p[j_{\Z^{2}}(m,n)j_{\B}\big(\phi_{(0,0)}(a)-\phi_{(1,0)}(\gamma(a))\big)j_{\Z^{2}}(x,y)^{*}]p$$
and
$$\tilde{\eta}_{(r,s)}^{(t,u)}(b):=p[j_{\Z^{2}}(r,s)j_{\B}\big(\phi_{(0,0)}(b)-\phi_{(0,1)}(\delta(b))\big)j_{\Z^{2}}(t,u)^{*}]p.$$
Then, since
\begin{align}
\label{eq42}
\xi_{(m,n)}^{(x,y)}(a)\eta_{(r,s)}^{(t,u)}(b)=\Psi^{-1}\big( \tilde{\xi}_{(m,n)}^{(x,y)}(a) \tilde{\eta}_{(r,s)}^{(t,u)}(b) \big),
\end{align}
we need to compute the product
$$\tilde{\xi}_{(m,n)}^{(x,y)}(a) \tilde{\eta}_{(r,s)}^{(t,u)}(b),$$
for which, we can use the equation (\ref{span-Jd.Jg}) in the proof of Theorem \ref{J for Z2}. So, by applying the covariance equation
of $(j_{\B},j_{\Z^{2}})$ and (\ref{suprem2}), we have
\begin{align}
\label{eq40}
\tilde{\xi}_{(m,n)}^{(x,y)}(a) \tilde{\eta}_{(r,s)}^{(t,u)}(b)=
p[j_{\Z^{2}}(m,n)j_{\Z^{2}}(x,y)^{*}j_{\B}(\mu)j_{\Z^{2}}(r,s)j_{\Z^{2}}(t,u)^{*}]p,
\end{align}
where
\begin{align}
\label{eq41}
\mu=\big[ \phi_{(x,y)}(a\overline{\alpha}_{(x,y)}(1))-\phi_{(x+1,y)}(\gamma(a)\overline{\alpha}_{(x+1,y)}(1)) \big]
\big[ \phi_{(r,s)}(b)-\phi_{(r,s+1)}(\delta(b)) \big].
\end{align}
Now, as it was discussed in the proof of Theorem \ref{J for Z2}, if $x\geq r$ and $y\leq s$, then $\mu$ is nonzero. Otherwise, it is
zero, and therefore, the product (\ref{eq42}) becomes zero, which is clearly in the right hand side of (\ref{Id.Ig-span}).
But, if $x\geq r$ and $y\leq s$, by applying the equation (\ref{span-Jd.Jg}), we get
$$\mu=\big[ \phi_{(x,s)}(c)-\phi_{(x,s+1)}(\delta(c)) \big]-\big[ \phi_{(x+1,s)}(\gamma(c))-\phi_{(x+1,s+1)}(\alpha_{(1,1)}(c)) \big],$$
where $c=\delta_{s-y}(a\overline{\alpha}_{(x,y)}(1)) \gamma_{x-r}(b)$. Hence, for (\ref{eq40}), we have
\begin{eqnarray*}
\begin{array}{l}
\tilde{\xi}_{(m,n)}^{(x,y)}(a) \tilde{\eta}_{(r,s)}^{(t,u)}(b)\\
=p\big[ j_{\Z^{2}}(m,n)j_{\Z^{2}}(x,y)^{*}j_{\B}\big( \phi_{(x,s)}(c)-\phi_{(x,s+1)}(\delta(c)) \big)j_{\Z^{2}}(r,s)j_{\Z^{2}}(t,u)^{*} \big]p\\
-p\big[ j_{\Z^{2}}(m,n)j_{\Z^{2}}(x,y)^{*}j_{\B}\big( \phi_{(x+1,s)}(\gamma(c))-\phi_{(x+1,s+1)}(\alpha_{(1,1)}(c)) \big)
j_{\Z^{2}}(r,s)j_{\Z^{2}}(t,u)^{*} \big]p,
\end{array}
\end{eqnarray*}
where again by applying the covariance equation of $(j_{\B},j_{\Z^{2}})$, it follows that
\begin{eqnarray*}
\begin{array}{l}
\tilde{\xi}_{(m,n)}^{(x,y)}(a) \tilde{\eta}_{(r,s)}^{(t,u)}(b)\\
=p\big[ j_{\Z^{2}}(m-x,n-y)
j_{\Z^{2}}(x,s) j_{\B}\big( \phi_{(0,0)}(c)-\phi_{(0,1)}(\delta(c)) \big) j_{\Z^{2}}(x,s)^{*} j_{\Z^{2}}(t-r,u-s)^{*} \big]p\\
-p\big[ j_{\Z^{2}}(m-x,n-y)
j_{\Z^{2}}(x+1,s) j_{\B}\big( \phi_{(0,0)}(\gamma(c))-\phi_{(0,1)}(\alpha_{(1,1)}(c)) \big)\\
j_{\Z^{2}}(x+1,s)^{*} j_{\Z^{2}}(t-r,u-s)^{*}  \big]p\\
=p\big[ j_{\Z^{2}}(m,n+s-y) j_{\B}\big( \phi_{(0,0)}(c)-\phi_{(0,1)}(\delta(c)) \big) j_{\Z^{2}}(x+t-r,u)^{*} \big]p\\
-p\big[ j_{\Z^{2}}(m+1,n+s-y) j_{\B}\big( \phi_{(0,0)}(\gamma(c))-\phi_{(0,1)}(\delta(\gamma(c))) \big)j_{\Z^{2}}(x+t-r+1,u)^{*} \big]p\\
=p\big[ j_{\Z^{2}}(m,k) j_{\B}\big( \phi_{(0,0)}(c)-\phi_{(0,1)}(\delta(c)) \big) j_{\Z^{2}}(i,u)^{*} \big]p\\
-p\big[ j_{\Z^{2}}(m+1,k) j_{\B}\big( \phi_{(0,0)}(\gamma(c))-\phi_{(0,1)}(\delta(\gamma(c))) \big)j_{\Z^{2}}(i+1,u)^{*} \big]p\\
=\tilde{\eta}_{(m,k)}^{(i,u)}(c)-\tilde{\eta}_{(m+1,k)}^{(i+1,u)}(\gamma(c)),
\end{array}
\end{eqnarray*}
where $k=n+s-y$ and $i=x+t-r$, which belong to $\N$. Consequently, for the product (\ref{eq42}), we get
\begin{eqnarray*}
\begin{array}{rcl}
\xi_{(m,n)}^{(x,y)}(a)\eta_{(r,s)}^{(t,u)}(b)&=&\Psi^{-1}\big( \tilde{\xi}_{(m,n)}^{(x,y)}(a) \tilde{\eta}_{(r,s)}^{(t,u)}(b) \big)\\
&=&\Psi^{-1}\big( \tilde{\eta}_{(m,k)}^{(i,u)}(c)-\tilde{\eta}_{(m+1,k)}^{(i+1,u)}(\gamma(c)) \big)\\
&=&\eta_{(m,k)}^{(i,u)}(c)-\eta_{(m+1,k)}^{(i+1,u)}(\gamma(c))\\
&=&\xi_{(m,k)}^{(i,u)}(c)-\xi_{(m,k+1)}^{(i,u+1)}(\delta(c))\ \ (\textrm{by}\ (\ref{xi-xi=eta-eta}))
\end{array}
\end{eqnarray*}
which belongs to the right hand side of (\ref{Id.Ig-span}). Thus, (\ref{Id.Ig-span}) is valid.
Note that $\I_{\delta}\cap \I_{\gamma}$ is an essential ideal of $A\times_{\alpha}^{\piso} \N^{2}$ as $\I_{\delta}$ and $\I_{\gamma}$
both are. This completes the proof.
\end{proof}

\begin{prop}
\label{idnfy-ideals}
Suppose that $(A\times_{\delta}^{\piso} \N,j_{A},v)$ and $(A\times_{\gamma}^{\piso} \N,\iota_{A},w)$
are the partial-isometric crossed products of the dynamical systems $(A,\N,\delta)$ and $(A,\N,\gamma)$, respectively.
Then, the (essential) ideals $\I_{\delta}$ and $\I_{\gamma}$ of $A\times_{\alpha}^{\piso} \N^{2}$ are full corners in
algebras $\K(\ell^{2}(\N)) \otimes (A\times_{\delta}^{\piso} \N)$ and $\K(\ell^{2}(\N)) \otimes (A\times_{\gamma}^{\piso} \N)$
of compact operators, respectively.
\end{prop}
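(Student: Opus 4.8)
The plan is to transport the two corner realisations already at hand and splice them with the corresponding statement over the totally ordered group $(\Z,\N)$. By Lemma~\ref{I=Id+Ig} the isomorphism $\Psi$ identifies $\I_{\delta}$ with the full corner $p(\J_{\delta}\rtimes_{\beta}\Z^{2})p$ of $\J_{\delta}\rtimes_{\beta}\Z^{2}$, where $p=\overline{j_{\B}}(\overline{\phi}_{(0,0)}(1))$ is full in $\J_{\delta}\rtimes_{\beta}\Z^{2}$; and by Theorem~\ref{J crossed Z2} the isomorphism $\Psi_{7}$ carries $\J_{\delta}\rtimes_{\beta}\Z^{2}$ onto $\K(\ell^{2}(\Z))\otimes(D\rtimes_{\tau}\Z)$. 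Hence $\I_{\delta}\simeq\overline{\Psi_{7}}(p)\bigl[\K(\ell^{2}(\Z))\otimes(D\rtimes_{\tau}\Z)\bigr]\overline{\Psi_{7}}(p)$, with $\overline{\Psi_{7}}(p)$ a full projection in $\M\bigl(\K(\ell^{2}(\Z))\otimes(D\rtimes_{\tau}\Z)\bigr)$. On the other hand, applying the corner realisation of \cite{SZ} (the totally ordered instance of \cite[Theorem~4.1]{SZ2}) to the system $(A,\N,\delta)$ over $(\Z,\N)$, the crossed product $A\times_{\delta}^{\piso}\N$ is isomorphic to the full corner $q_{\delta}(D\rtimes_{\tau}\Z)q_{\delta}$, where $q_{\delta}=\overline{k_{D}\circ\varphi_{0}}(1)$ and $k_{D}$ is the canonical embedding of $D$ into $D\rtimes_{\tau}\Z$. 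Writing $Q\in B(\ell^{2}(\Z))$ for the orthogonal projection onto $\clsp\{e_{r}:r\in\N\}\simeq\ell^{2}(\N)$, I would reduce the proposition to the single inequality $\overline{\Psi_{7}}(p)\le Q\otimes q_{\delta}$. Indeed, $Q$ is full in the simple algebra $\K(\ell^{2}(\Z))$ and $q_{\delta}$ is full in $D\rtimes_{\tau}\Z$, so $Q\otimes q_{\delta}$ is full; and a projection that is full in a $C^{*}$-algebra $E$ and dominated by a full projection $f\in\M(E)$ is automatically full in $fEf$, since the closed-ideal lattices of a full corner and its ambient algebra are canonically isomorphic. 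As $(Q\otimes q_{\delta})\bigl[\K(\ell^{2}(\Z))\otimes(D\rtimes_{\tau}\Z)\bigr](Q\otimes q_{\delta})=Q\K(\ell^{2}(\Z))Q\otimes q_{\delta}(D\rtimes_{\tau}\Z)q_{\delta}\simeq\K(\ell^{2}(\N))\otimes(A\times_{\delta}^{\piso}\N)$, this would exhibit $\I_{\delta}$ as a full corner in $\K(\ell^{2}(\N))\otimes(A\times_{\delta}^{\piso}\N)$.

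The step I expect to be the main obstacle is this identification of $\overline{\Psi_{7}}(p)$. Since $\Psi_{7}=(\Psi_{5}\otimes\id)\circ\Psi_{3}\circ\Psi_{1}$ with each factor an isomorphism, hence order preserving on multiplier algebras, it suffices to follow $p$ through the three maps. First, $\overline{\Psi_{1}}(p)=\overline{k_{C_{0}(\Z)\otimes D}}\bigl(\overline{\Lambda}_{\delta}(\overline{\phi}_{(0,0)}(1))\bigr)=\overline{k_{C_{0}(\Z)\otimes D}}\bigl(\overline{\Delta}_{(0,0)}(1)\bigr)$, since $\overline{\Lambda}_{\delta}\circ\overline{\phi}_{(0,0)}=\overline{\Delta}_{(0,0)}$ by uniqueness of strictly continuous extensions (Lemma~\ref{extension}). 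By the formula recorded just after Lemma~\ref{extension}, the multiplier $\overline{\Delta}_{(0,0)}(1)\in\ell^{\infty}(\Z,\M(D))=\M(C_{0}(\Z)\otimes D)$ is the function $r\mapsto\overline{\varphi}_{0}(\overline{\gamma}_{r}(1))$ for $r\ge0$ and $0$ for $r<0$. As each $\overline{\gamma}_{r}(1)$ is a projection in $\M(A)$, so $\overline{\gamma}_{r}(1)\le 1_{\M(A)}$, and $\overline{\varphi}_{0}$ is a $*$-homomorphism, we get $\overline{\varphi}_{0}(\overline{\gamma}_{r}(1))\le\overline{\varphi}_{0}(1)$ in $\M(D)$; hence $\overline{\Delta}_{(0,0)}(1)\le 1_{\N}\otimes\overline{\varphi}_{0}(1)$, where $1_{\N}\in\ell^{\infty}(\Z)=\M(C_{0}(\Z))$ is the characteristic function of $\N$. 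Finally $\overline{\Psi_{3}}$ sends $\overline{k_{C_{0}(\Z)\otimes D}}(1_{\N}\otimes\overline{\varphi}_{0}(1))$ to $\overline{i_{C_{0}(\Z)}}(1_{\N})\otimes q_{\delta}$, and $\overline{\Psi_{5}}\otimes\id$ sends $\overline{i_{C_{0}(\Z)}}(1_{\N})$ to the strictly convergent sum $\sum_{r\ge0}e_{r}\otimes\overline{e_{r}}$, which is exactly $Q$; combining these three steps gives $\overline{\Psi_{7}}(p)\le Q\otimes q_{\delta}$, as required.

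The assertion for $\I_{\gamma}$ follows by the symmetric argument: replace $\Psi_{7}$ by the isomorphism $\Psi_{6}$ of Theorem~\ref{J crossed Z2}, $D$ by $C$, the embeddings $\varphi_{n}$ by $\psi_{n}$, $\Lambda_{\delta}$ by $\Lambda_{\gamma}$, and $q_{\delta}$ by $q_{\gamma}=\overline{\tilde{i}_{C}\circ\psi_{0}}(1)\in\M(C\rtimes_{\tau}\Z)$, and invoke the corner realisation $A\times_{\gamma}^{\piso}\N\simeq q_{\gamma}(C\rtimes_{\tau}\Z)q_{\gamma}$ of \cite{SZ}; one concludes that $\I_{\gamma}$ is a full corner in $\K(\ell^{2}(\N))\otimes(A\times_{\gamma}^{\piso}\N)$.
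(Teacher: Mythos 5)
Your proof is correct, and its skeleton is the one the paper uses: transport $\I_{\delta}$ through $\Psi$ to the full corner $p(\J_{\delta}\rtimes_{\beta}\Z^{2})p$, push through $\Psi_{7}$ (the paper uses $\Omega=\Pi\circ\Psi_{7}$, which only differs by the canonical isomorphism onto $\K\big(\ell^{2}(\Z)\otimes(D\rtimes_{\tau}\Z)\big)$), and invoke the corner realisation $A\times_{\delta}^{\piso}\N\simeq \overline{k_{D}\circ\varphi_{0}}(1)\,(D\rtimes_{\tau}\Z)\,\overline{k_{D}\circ\varphi_{0}}(1)$ from \cite{SZ}. Where you diverge is the endgame. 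The paper computes the image of $p$ \emph{exactly}: it shows $P_{\delta}=\overline{\Omega}(p)$ acts by $(P_{\delta}f)(m)=\overline{k_{D}\circ\varphi_{0}}(\overline{\gamma}_{m}(1))f(m)$ for $m\geq 0$ and $0$ otherwise, identifies the corner $P_{\delta}\K\big(\ell^{2}(\Z)\otimes(D\rtimes_{\tau}\Z)\big)P_{\delta}$ with $Q_{\delta}\K\big(\ell^{2}(\N)\otimes(A\times_{\delta}^{\piso}\N)\big)Q_{\delta}$ on the nose, and then proves fullness of $Q_{\delta}$ by a hands-on approximate-unit manipulation of rank-one operators. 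You instead settle for the domination $\overline{\Psi_{7}}(p)\leq Q\otimes q_{\delta}$ (your computation of $\overline{\Delta}_{(0,0)}(1)$ is exactly the content of the paper's formula for $P_{\delta}$) and then get fullness for free from the fullness of $p$ in $\J_{\delta}\rtimes_{\beta}\Z^{2}$ recorded in Lemma~\ref{I=Id+Ig}, via the observation that $\overline{(fEf)e(fEf)}=\overline{fEeEf}=f\,\overline{EeE}\,f=fEf$ whenever $e=fef$ and $e$ is full in $E$. That buys you a shorter argument with no approximate-unit computation. What it does not buy you is the explicit projection $Q_{\delta}$ and the explicit formula (\ref{psi-d}) for $\Psi_{\delta}$ on spanning elements, both of which the paper reuses in Theorem~\ref{main-TH} to compute $\Psi_{\delta}(\I_{\delta}\cap\I_{\gamma})=Q_{\delta}\K(\ell^{2}(\N)\otimes I)Q_{\delta}$ and to identify the subquotient $\I_{\delta}/(\I_{\delta}\cap\I_{\gamma})$; if one follows your route, that extra bookkeeping still has to be done later.
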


\begin{proof}
We only provide the proof on the ideal $\I_{\delta}$ as the proof on $\I_{\gamma}$ follows similarly.
Firstly, by Lemma \ref{I=Id+Ig},
$$\I_{\delta}\stackrel{\Psi}\simeq p(\J_{\delta}\rtimes_{\beta} \Z^{2})p,$$
where $p=\overline{j_{\B}}(\overline{\phi}_{(0,0)}(1))$. Let the map
$$\Pi:\K(\ell^{2}(\Z))\otimes (D\rtimes_{\tau}\Z)\rightarrow \K\big(\ell^{2}(\Z) \otimes (D\rtimes_{\tau} \Z)\big)$$
be the canonical isomorphism such that
$$\Pi((e_{m}\otimes \overline{e_{n}})\otimes \xi \eta^{*})=\Theta_{e_{m}\otimes \xi, e_{n}\otimes \eta}$$
for all $m,n\in\Z$ and $\xi,\eta \in (D\rtimes_{\tau} \Z,k)$, where $\{e_{m}: m\in\Z\}$ is the usual orthonormal basis of $\ell^{2}(\Z)$.
Then, $\Omega:=(\Pi\circ\Psi_{7})$ (see the isomorphism $\Psi_{7}$ in Theorem \ref{J crossed Z2}) is an isomorphism
of $\J_{\delta}\rtimes_{\beta} \Z^{2}$ onto $\K\big(\ell^{2}(\Z) \otimes (D\rtimes_{\tau} \Z)\big)$, such that it maps each
spanning element $$j_{\B}\big(\phi_{(m,n)}(ab^{*})-\phi_{(m+1,n)}(\gamma(ab^{*}))\big)j_{\Z^{2}}(x,y)$$
of $\J_{\delta}\rtimes_{\beta} \Z^{2}$ to the (spanning) element
$$\Theta_{[e_{m}\otimes k_{D}(\varphi_{n}(a))],[e_{m-x}\otimes k_{\Z}(-y)k_{D}(\varphi_{n}(b))]},$$
where $a,b\in A$. Therefore, we have
\begin{align}
\label{eq45}
\I_{\delta}\stackrel{\Psi}\simeq p(\J_{\delta}\rtimes_{\beta} \Z^{2})p \stackrel{\Omega}\simeq
\overline{\Omega}(p) \K\big(\ell^{2}(\Z) \otimes (D\rtimes_{\tau} \Z)\big) \overline{\Omega}(p),
\end{align}
where $\overline{\Omega}(p)$ is a projection in $\M\big(\K\big(\ell^{2}(\Z) \otimes (D\rtimes_{\tau} \Z)\big)\big)\simeq
 \L\big(\ell^{2}(\Z) \otimes (D\rtimes_{\tau} \Z)\big)$ which we denote it by $P_{\delta}$.
We claim that
\begin{align}\label{P-delta}
(P_{\delta}f)(m)=
   \begin{cases}
      \overline{k_{D}\circ\varphi_{0}}(\overline{\gamma}_{m}(1))f(m) &\textrm{if}\empty\ \text{$m\geq 0,$}\\
      0 &\textrm{if}\empty\ \text{$m< 0$}
   \end{cases}
\end{align}
for all $f\in \ell^{2}(\Z) \otimes (D\rtimes_{\tau} \Z)$. To prove our claim, it suffices to see that
\[
P_{\delta}\big(e_{m}\otimes k_{D}(\varphi_{n}(ab^{*}))\xi\big)=
   \begin{cases}
      e_{m}\otimes [\overline{k_{D}\circ\varphi_{0}}(\overline{\gamma}_{m}(1))]k_{D}(\varphi_{n}(ab^{*}))\xi
      &\textrm{if}\empty\ \text{$m\geq 0$,}\\
      0 &\textrm{if}\empty\ \text{$m< 0$}
   \end{cases}
\]
on the spanning element $[e_{m}\otimes k_{D}(\varphi_{n}(ab^{*}))\xi]$ of
$\ell^{2}(\Z) \otimes (D\rtimes_{\tau} \Z)$, where $\xi\in D\rtimes_{\tau} \Z$. Since
\begin{eqnarray*}
\begin{array}{rcl}
e_{m}\otimes k_{D}(\varphi_{n}(ab^{*}))\xi&=&
\Theta_{[e_{m}\otimes k_{D}(\varphi_{n}(a))],
[e_{m}\otimes k_{D}(\varphi_{n}(b))]}(e_{m}\otimes \xi)\\
&=&\Omega\bigg(j_{\B}\big(\phi_{(m,n)}(ab^{*})-\phi_{(m+1,n)}(\gamma(ab^{*}))\big)\bigg)(e_{m}\otimes \xi),
\end{array}
\end{eqnarray*}
it follows that
\begin{eqnarray}
\label{eq43}
\begin{array}{rcl}
P_{\delta}\big(e_{m}\otimes k_{D}(\varphi_{n}(ab^{*}))\xi\big)&=&
\overline{\Omega}(p) \Omega\bigg(j_{\B}\big(\phi_{(m,n)}(ab^{*})-\phi_{(m+1,n)}(\gamma(ab^{*}))\big)\bigg)(e_{m}\otimes \xi)\\
&=&\Omega\bigg(p j_{\B}\big(\phi_{(m,n)}(ab^{*})-\phi_{(m+1,n)}(\gamma(ab^{*}))\big)\bigg)(e_{m}\otimes \xi).\\
\end{array}
\end{eqnarray}
Now, in the bottom line, as
\begin{eqnarray*}
\begin{array}{rcl}
pj_{\B}\big(\phi_{(m,n)}(ab^{*})-\phi_{(m+1,n)}(\gamma(ab^{*}))\big)&=&
\overline{j_{\B}}(\overline{\phi}_{(0,0)}(1))j_{\B}\big(\phi_{(m,n)}(ab^{*})-\phi_{(m+1,n)}(\gamma(ab^{*}))\big)\\
&=&j_{\B}\big(\overline{\phi}_{(0,0)}(1)[\phi_{(m,n)}(ab^{*})-\phi_{(m+1,n)}(\gamma(ab^{*}))]\big),
\end{array}
\end{eqnarray*}
we need to compute the product
\begin{align}
\label{eq31}
\overline{\phi}_{(0,0)}(1)[\phi_{(m,n)}(ab^{*})-\phi_{(m+1,n)}(\gamma(ab^{*}))].
\end{align}
To do so, we consider two cases $m\geq 0$ and $m<0$ separately. If $m\geq 0$, calculation by applying (\ref{suprem2}) shows that
\begin{align}
\label{eq44}
\overline{\phi}_{(0,0)}(1)[\phi_{(m,n)}(ab^{*})-\phi_{(m+1,n)}(\gamma(ab^{*}))]
=\phi_{(m,t)}(cd^{*})-\phi_{(m+1,t)}(\gamma(cd^{*})),
\end{align}
where $c=\overline{\alpha}_{(m,t)}(1)\delta_{t-n}(a)$, $d^{*}=\delta_{t-n}(b^{*})$, and $t=0\vee n$.
Therefore, by applying (\ref{eq44}) to (\ref{eq43}), we get
\begin{eqnarray*}
\begin{array}{rcl}
P_{\delta}\big(e_{m}\otimes k_{D}(\varphi_{n}(ab^{*}))\xi\big)&=&
\Omega\bigg( j_{\B}\big(\phi_{(m,t)}(cd^{*})-\phi_{(m+1,t)}(\gamma(cd^{*}))\big)\bigg)(e_{m}\otimes \xi)\\
&=&\Theta_{[e_{m}\otimes k_{D}(\varphi_{t}(c))],[e_{m}\otimes k_{D}(\varphi_{t}(d))]}(e_{m}\otimes \xi)
=e_{m}\otimes k_{D}(\varphi_{t}(cd^{*}))\xi.
\end{array}
\end{eqnarray*}
Moreover, in the bottom line, for $\varphi_{t}(cd^{*})$, we have
\begin{eqnarray*}
\begin{array}{rcl}
\varphi_{t}(cd^{*})&=&\varphi_{t}(\overline{\alpha}_{(m,t)}(1)\delta_{t-n}(ab^{*}))\\
&=&\varphi_{t}(\overline{\delta}_{t}(\overline{\gamma}_{m}(1))\delta_{t-n}(ab^{*}))\\
&=&\overline{\varphi}_{0}(\overline{\gamma}_{m}(1))\varphi_{n}(ab^{*})\ \ (\textrm{recall that}\ t=0\vee n).
\end{array}
\end{eqnarray*}
Consequently,
\begin{eqnarray*}
\begin{array}{rcl}
P_{\delta}\big(e_{m}\otimes k_{D}(\varphi_{n}(ab^{*}))\xi\big)&=&
e_{m}\otimes k_{D}(\overline{\varphi}_{0}(\overline{\gamma}_{m}(1))\varphi_{n}(ab^{*}))\xi\\
&=&e_{m}\otimes \overline{k_{D}}(\overline{\varphi}_{0}(\overline{\gamma}_{m}(1)))k_{D}(\varphi_{n}(ab^{*}))\xi\\
&=&e_{m}\otimes [\overline{k_{D}\circ\varphi_{0}}(\overline{\gamma}_{m}(1))]k_{D}(\varphi_{n}(ab^{*}))\xi.
\end{array}
\end{eqnarray*}
If $m<0$, again by applying (\ref{suprem2}), one can calculate to see that the product (\ref{eq31}) equals zero, and hence,
for (\ref{eq43}), it follows that
$$P_{\delta}\big(e_{m}\otimes k_{D}(\varphi_{n}(ab^{*}))\xi\big)=\Omega(j_{\B}(0))(e_{m}\otimes \xi)=0.$$
Thus, (\ref{P-delta}) is indeed valid. Next, we show that
$P_{\delta}\K\big(\ell^{2}(\Z) \otimes (D\rtimes_{\tau} \Z)\big)P_{\delta}$ is actually equal to the corner
$$Q_{\delta}\K\big(\ell^{2}(\N) \otimes (A\times_{\delta}^{\piso} \N)\big)Q_{\delta}$$ of the algebra
$\K\big(\ell^{2}(\N) \otimes (A\times_{\delta}^{\piso} \N)\big)\simeq \K(\ell^{2}(\N)) \otimes (A\times_{\delta}^{\piso} \N)$,
where $Q_{\delta}$ is a projection in
$\M\big(\K\big(\ell^{2}(\N) \otimes (A\times_{\delta}^{\piso} \N)\big)\big) \simeq
\L\big(\ell^{2}(\N) \otimes (A\times_{\delta}^{\piso} \N)\big)$
defined by
$$(Q_{\delta}f)(m)=\overline{j_{A}}(\overline{\gamma}_{m}(1))f(m)$$
for all $f\in \ell^{2}(\N) \otimes (A\times_{\delta}^{\piso} \N)$. To do so,
recall that first, by \cite[Theorem 4.1]{SZ} (or \cite[Theorem 4.1]{SZ2}), the crossed product $A\times_{\delta}^{\piso} \N$
is isomorphic to the full corner $r(D\rtimes_{\tau}\Z)r$, where $r$ is the projection
$\overline{k_{D}\circ\varphi_{0}}(1)$ in the multiplier algebra $\M(D\rtimes_{\tau}\Z)$.
Then, since $\K\big(\ell^{2}(\Z) \otimes (D\rtimes_{\tau} \Z)\big)$ is spanned by the elements (compact operators)
$\big\{\Theta_{e_{m}\otimes \xi, e_{n}\otimes \eta}: m,n\in\Z,\ \xi,\eta \in (D\rtimes_{\tau} \Z)\big\}$, we have
\begin{eqnarray*}
\begin{array}{rcl}
P_{\delta}\K\big(\ell^{2}(\Z) \otimes (D\rtimes_{\tau} \Z)\big)P_{\delta}&=&
\clsp\{P_{\delta}(\Theta_{e_{m}\otimes \xi, e_{n}\otimes \eta})P_{\delta}: m,n\in\Z,\ \xi,\eta \in (D\rtimes_{\tau} \Z)\}\\
&=&\clsp\{\Theta_{P_{\delta}(e_{m}\otimes \xi), P_{\delta}(e_{n}\otimes \eta)}: m,n\in\Z,\ \xi,\eta \in (D\rtimes_{\tau} \Z)\}.
\end{array}
\end{eqnarray*}
However, if $m<0$ or $n<0$, then $P_{\delta}(e_{m}\otimes \xi)=0$ or $P_{\delta}(e_{n}\otimes \eta)=0$, and hence,
$\Theta_{P_{\delta}(e_{m}\otimes \xi), P_{\delta}(e_{n}\otimes \eta)}=0$. It thus follows that
$$P_{\delta}\K\big(\ell^{2}(\Z) \otimes (D\rtimes_{\tau} \Z)\big)P_{\delta}=
\clsp\{\Theta_{P_{\delta}(e_{m}\otimes \xi), P_{\delta}(e_{n}\otimes \eta)}: m,n\in\N,\ \xi,\eta \in (D\rtimes_{\tau} \Z)\}.$$
Moreover, since
\begin{eqnarray*}
\begin{array}{rcl}
P_{\delta}(e_{m}\otimes \xi)&=&e_{m}\otimes \overline{k_{D}\circ\varphi_{0}}(\overline{\gamma}_{m}(1))\xi\\
&=&e_{m}\otimes \overline{k_{D}\circ\varphi_{0}}(\overline{\gamma}_{m}(1))\overline{k_{D}\circ\varphi_{0}}(1)\xi\\
&=&e_{m}\otimes \overline{k_{D}\circ\varphi_{0}}(\overline{\gamma}_{m}(1))r\xi
=P_{\delta}(e_{m}\otimes r\xi),
\end{array}
\end{eqnarray*}
we have
\begin{align}
\label{eq33}
\Theta_{P_{\delta}(e_{m}\otimes \xi), P_{\delta}(e_{n}\otimes \eta)}
=\Theta_{P_{\delta}(e_{m}\otimes r \xi), P_{\delta}(e_{n}\otimes r \eta)}
=P_{\delta}(\Theta_{e_{m}\otimes r \xi, e_{n}\otimes r \eta})P_{\delta}
\end{align}
for all $m,n\in\N$ and $\xi,\eta \in D\rtimes_{\tau} \Z$. However, each element
$\Theta_{[e_{m}\otimes r \xi], [e_{n}\otimes r\eta]}$ is actually a compact operator in
$\K\big(\ell^{2}(\N)\otimes (A\times_{\delta}^{\piso} \N)\big)$.
This is due to the facts that
$$\Theta_{[e_{m}\otimes r \xi], [e_{n}\otimes r\eta]}=\Pi((e_{m}\otimes \overline{e_{n}})\otimes r\xi \eta^{*} r),$$
where
$r\xi \eta^{*} r\in r(D\rtimes_{\tau}\Z)r \simeq A\times_{\delta}^{\piso} \N$, and the restriction of the isomorphism $\Pi$ to the subalgebra
$\K(\ell^{2}(\N))\otimes (A\times_{\delta}^{\piso} \N)$ of $\K(\ell^{2}(\Z))\otimes (D\rtimes_{\tau}\Z)$
gives the canonical isomorphism of $\K(\ell^{2}(\N))\otimes (A\times_{\delta}^{\piso} \N)$
onto $\K\big(\ell^{2}(\N)\otimes (A\times_{\delta}^{\piso} \N)\big)$. Eventually, since for every $m\in\N$
and $a\in (A\times_{\delta}^{\piso} \N)$,
\begin{align}
\label{eq34}
P_{\delta}(e_{m}\otimes a)=e_{m}\otimes (\overline{k_{D}\circ\varphi_{0}})(\overline{\gamma}_{m}(1))a
=e_{m}\otimes \overline{j_{A}}(\overline{\gamma}_{m}(1))a=Q_{\delta}(e_{m}\otimes a),
\end{align}
it follows that (see (\ref{eq33}))
\begin{eqnarray*}
\begin{array}{l}
P_{\delta}\K\big(\ell^{2}(\Z) \otimes (D\rtimes_{\tau} \Z)\big)P_{\delta}\\
=\clsp\big\{P_{\delta}(\Theta_{e_{m}\otimes r\xi, e_{n}\otimes r\eta})P_{\delta}:
m,n\in\N, \xi,\eta \in (D\rtimes_{\tau} \Z)\big\}\\
=\clsp\big\{P_{\delta}\Pi\big((e_{m}\otimes \overline{e_{n}})\otimes r\xi \eta^{*}r\big)P_{\delta}:
m,n\in\N, \xi,\eta \in (D\rtimes_{\tau} \Z)\big\}\\
=\clsp\big\{P_{\delta}\Pi\big((e_{m}\otimes \overline{e_{n}})\otimes ab^{*}\big)P_{\delta}:
m,n\in\N, a,b \in (A\times_{\delta}^{\piso} \N)\big\}\\
=\clsp\big\{P_{\delta}(\Theta_{e_{m}\otimes a, e_{n}\otimes b})P_{\delta}:m,n\in\N, a,b \in (A\times_{\delta}^{\piso} \N)\big\}\\
=\clsp\big\{\Theta_{P_{\delta}(e_{m}\otimes a), P_{\delta}(e_{n}\otimes b)}:m,n\in\N, a,b \in (A\times_{\delta}^{\piso} \N)\big\}\\
=\clsp\big\{\Theta_{Q_{\delta}(e_{m}\otimes a), Q_{\delta}(e_{n}\otimes b)}: m,n\in\N, a,b\in (A\times_{\delta}^{\piso} \N)\big\}\\
=\clsp\big\{Q_{\delta}(\Theta_{e_{m}\otimes a, e_{n}\otimes b})Q_{\delta}: m,n\in\N, a,b\in (A\times_{\delta}^{\piso} \N)\big\}\\
=Q_{\delta}\K\big(\ell^{2}(\N)\otimes (A\times_{\delta}^{\piso} \N)\big)Q_{\delta}.
\end{array}
\end{eqnarray*}
Consequently,
$$\I_{\delta}\stackrel{\Psi}\simeq p(\J_{\delta}\rtimes_{\beta} \Z^{2})p \stackrel{\Omega}\simeq
P_{\delta}\K\big(\ell^{2}(\Z) \otimes (D\rtimes_{\tau} \Z)\big)P_{\delta}
=Q_{\delta}\K\big(\ell^{2}(\N)\otimes (A\times_{\delta}^{\piso} \N)\big)Q_{\delta}.$$
More precisely, the composition $\Omega\circ \Psi$ of isomorphisms gives an isomorphism
$$\Psi_{\delta}: \I_{\delta} \rightarrow Q_{\delta}\K\big(\ell^{2}(\N)\otimes (A\times_{\delta}^{\piso} \N)\big)Q_{\delta}$$
such that
\begin{eqnarray}
\label{psi-d}
\begin{array}{l}
\Psi_{\delta}\big( i_{\N^{2}}(m,n)^{*} i_{A}(ab^{*}) [1-i_{\N^{2}}(1,0)^{*}i_{\N^{2}}(1,0)]i_{\N^{2}}(x,y) \big)\\
=Q_{\delta}\Pi\big((e_{m}\otimes \overline{e_{x}}) \otimes v_{n}^{*}j_{A}(ab^{*})v_{y}\big)Q_{\delta}
=Q_{\delta}\big(\Theta_{[e_{m}\otimes v_{n}^{*}j_{A}(a)], [e_{x}\otimes v_{y}^{*}j_{A}(b)]}\big)Q_{\delta}.
\end{array}
\end{eqnarray}
Note that, by applying the covariance equations of the pairs $(j_{\B},j_{\Z^{2}})$ and $(k_{D},k_{\Z})$, (\ref{eq33}), and
(\ref{eq34}), one can calculate on the spanning elements of $\I_{\delta}$ to see (\ref{psi-d}).

To see that $\I_{\delta}$ is a full corner in $\K(\ell^{2}(\N)\otimes (A\times_{\delta}^{\piso} \N))$, first note that the
algebra $\K(\ell^{2}(\N)\otimes (A\times_{\delta}^{\piso} \N))$ is spanned by the elements
\begin{align}
\label{eq36}
\Theta_{[e_{m}\otimes v_{n}^{*}j_{A}(a)v_{r}], [e_{x}\otimes v_{y}^{*}j_{A}(bc^{*})v_{s}]}.
\end{align}
Now, if $\{a_{\lambda}\}$ is an approximate unit in $A$, then the spanning element (\ref{eq36}) is the norm-limit of the net
\begin{align}
\label{eq37}
\Theta_{[e_{m}\otimes v_{n}^{*}j_{A}(aa_{\lambda})v_{r}], [e_{x}\otimes v_{y}^{*}j_{A}(bc^{*})v_{s}]}
\end{align}
in $\K(\ell^{2}(\N)\otimes (A\times_{\delta}^{\piso} \N))$. But, calculation shows that, for (\ref{eq37}), we have
\begin{eqnarray*}
\begin{array}{l}
\Theta_{[e_{m}\otimes v_{n}^{*}j_{A}(aa_{\lambda})v_{r}], [e_{x}\otimes v_{y}^{*}j_{A}(bc^{*})v_{s}]}\\
=\big(\Theta_{[e_{m}\otimes v_{n}^{*}j_{A}(a)], [e_{0}\otimes v_{r}^{*}j_{A}(a_{\lambda})]}\big)
\big(\Theta_{[e_{0}\otimes v_{s}^{*}j_{A}(c)], [e_{x}\otimes v_{y}^{*}j_{A}(b)]}\big)\\
=\big(\Theta_{[e_{m}\otimes v_{n}^{*}j_{A}(a)], [e_{0}\otimes v_{r}^{*}j_{A}(a_{\lambda})]}\big)Q_{\delta}
\big(\Theta_{[e_{0}\otimes v_{s}^{*}j_{A}(c)], [e_{x}\otimes v_{y}^{*}j_{A}(b)]}\big),\\
\end{array}
\end{eqnarray*}
which belongs to
\begin{align}
\label{K Q-del K}
\overline{\K(\ell^{2}(\N)\otimes (A\times_{\delta}^{\piso} \N))Q_{\delta}\K(\ell^{2}(\N)\otimes (A\times_{\delta}^{\piso} \N))}.
\end{align}
It therefore follows that each spanning element (\ref{eq36}) of $\K(\ell^{2}(\N)\otimes (A\times_{\delta}^{\piso} \N))$ must belong to
(\ref{K Q-del K}), and hence, we have
$$\K(\ell^{2}(\N)\otimes (A\times_{\delta}^{\piso} \N))
=\overline{\K(\ell^{2}(\N)\otimes (A\times_{\delta}^{\piso} \N))Q_{\delta}\K(\ell^{2}(\N)\otimes (A\times_{\delta}^{\piso} \N))}.$$

At last, it follows by a similar discussion that there is an isomorphism $\Psi_{\gamma}$ of the ideal $\I_{\gamma}$ onto the full
corner $Q_{\gamma}\K(\ell^{2}(\N)\otimes (A\times_{\gamma}^{\piso} \N))Q_{\gamma}$, where $Q_{\gamma}$ is a projection in
$\M\big(\K\big(\ell^{2}(\N) \otimes (A\times_{\gamma}^{\piso} \N)\big)\big)\simeq \L\big(\ell^{2}(\N) \otimes (A\times_{\gamma}^{\piso} \N)\big)$
defined by
$$(Q_{\gamma}h)(n)=\overline{\iota_{A}}(\overline{\delta}_{n}(1))h(n)$$
for all $h\in \ell^{2}(\N) \otimes (A\times_{\gamma}^{\piso} \N)$. The isomorphism $\Psi_{\gamma}$ maps each spanning element
$$i_{\N^{2}}(r,s)^{*} i_{A}(ab^{*}) [1-i_{\N^{2}}(0,1)^{*}i_{\N^{2}}(0,1)]i_{\N^{2}}(t,u)$$
of $\I_{\gamma}$ to the (spanning) element
$$Q_{\gamma}\big(\Theta_{[e_{s}\otimes w_{r}^{*}\iota_{A}(a)], [e_{u}\otimes w_{t}^{*}\iota_{A}(b)]}\big)Q_{\gamma}.$$
This completes the proof.
\end{proof}

\begin{theorem}
\label{main-TH}
Let $A\times_{\alpha}^{\piso} \N^{2}$ be the partial-isometric crossed product of the system $(A,\N^{2},\alpha)$ in which
the action $\alpha$ on $A$ is given by extendible endomorphisms. Then, there is a composition series
\begin{align}
\label{main-compose}
0\leq L_{1}\leq L_{2} \leq A\times_{\alpha}^{\piso} \N^{2}
\end{align}
of essential ideals, such that:
\begin{itemize}
\item[(i)] the ideal $L_{1}$ is (isomorphic to) a full corner in the algebra $\K(\ell^{2}(\N^{2}))\otimes A$ of compact operators,
\item[(ii)] $L_{2}/L_{1}\simeq \A_{\delta} \oplus \A_{\gamma}$, and
\item[(iii)] $(A\times_{\alpha}^{\piso} \N^{2})/L_{2}\simeq A\times_{\alpha}^{\iso} \N^{2}$,
\end{itemize}
where the algebras $\A_{\delta}$ and $\A_{\gamma}$ are full corners in algebras
$\K(\ell^{2}(\N)) \otimes (A\times_{\delta}^{\iso} \N)$ and $\K(\ell^{2}(\N)) \otimes (A\times_{\gamma}^{\iso} \N)$
of compact operators, respectively.
\end{theorem}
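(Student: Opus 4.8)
The plan is to take $L_{2}:=\ker q$ and $L_{1}:=\I_{\delta}\cap\I_{\gamma}$, where $\I_{\delta}$ and $\I_{\gamma}$ are the essential ideals produced by Lemma~\ref{I=Id+Ig}. Both are essential ideals of $A\times_{\alpha}^{\piso}\N^{2}$ ($\ker q$ by \cite[Proposition~4.3]{SZ2}, and $\I_{\delta}\cap\I_{\gamma}$ by Lemma~\ref{I=Id+Ig}), and since $L_{1}\subseteq\I_{\delta}\subseteq\I_{\delta}+\I_{\gamma}=\ker q=L_{2}$ (again Lemma~\ref{I=Id+Ig}), the chain \eqref{main-compose} is a composition series of essential ideals. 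Part~(iii) is then immediate, because by the short exact sequence \eqref{exseq1} the map $q$ induces an isomorphism $(A\times_{\alpha}^{\piso}\N^{2})/L_{2}\simeq A\times_{\alpha}^{\iso}\N^{2}$.

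For (i), I would transport $L_{1}$ through the isomorphism $\Psi$ of \cite[Theorem~4.1]{SZ2}. By Lemma~\ref{I=Id+Ig}, $\Psi(\I_{\delta})=p(\J_{\delta}\rtimes_{\beta}\Z^{2})p$ and $\Psi(\I_{\gamma})=p(\J_{\gamma}\rtimes_{\beta}\Z^{2})p$; since $pIp\cap pJp=p(I\cap J)p$ for any two ideals $I,J$ of $\B\rtimes_{\beta}\Z^{2}$, this gives $\Psi(L_{1})=p\big((\J_{\delta}\rtimes_{\beta}\Z^{2})\cap(\J_{\gamma}\rtimes_{\beta}\Z^{2})\big)p$. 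By Theorems~\ref{J for Z2} and~\ref{J crossed Z2} the algebra in the middle equals $(\J_{\delta}\cap\J_{\gamma})\rtimes_{\beta}\Z^{2}=(C_{0}(\Z^{2})\otimes A)\rtimes_{\beta}\Z^{2}\simeq\K(\ell^{2}(\Z^{2}))\otimes A$. Moreover, since $p(\B\rtimes_{\beta}\Z^{2})p$ is a full corner in $\B\rtimes_{\beta}\Z^{2}$, the corner $pIp$ is full in $I$ for every ideal $I$ of $\B\rtimes_{\beta}\Z^{2}$ (this is the computation already carried out for $\J_{\delta}$ and $\J_{\gamma}$ in the proof of Lemma~\ref{I=Id+Ig}, cf. \cite[Lemma~4.2]{SZ2}). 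Hence $L_{1}$ is a full corner in $\K(\ell^{2}(\Z^{2}))\otimes A\simeq\K(\ell^{2}(\N^{2}))\otimes A$, which is (i).

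For (ii), I observe that in $L_{2}/L_{1}=(\I_{\delta}+\I_{\gamma})/(\I_{\delta}\cap\I_{\gamma})$ the images of $\I_{\delta}$ and $\I_{\gamma}$ are ideals with zero intersection whose sum is the whole algebra; two ideals of a $C^*$-algebra with zero intersection are orthogonal and their sum is automatically closed and equal to their direct sum, so
\[
L_{2}/L_{1}\;\simeq\;\I_{\delta}/(\I_{\delta}\cap\I_{\gamma})\;\oplus\;\I_{\gamma}/(\I_{\delta}\cap\I_{\gamma})\;=:\;\A_{\delta}\oplus\A_{\gamma}.
\]
It then remains to identify $\A_{\delta}$, the argument for $\A_{\gamma}$ being symmetric (with $w$ in place of $v$). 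Writing $q_{\delta}:A\times_{\delta}^{\piso}\N\to A\times_{\delta}^{\iso}\N$ for the natural surjection, one has as in \eqref{ker-q} that $\ker q_{\delta}=\clsp\{v_{m}^{*}j_{A}(a)(1-v_{1}^{*}v_{1})v_{n}:m,n\in\N,\ a\in A\}$ is an ideal of $A\times_{\delta}^{\piso}\N$ with $A\times_{\delta}^{\iso}\N\simeq(A\times_{\delta}^{\piso}\N)/\ker q_{\delta}$, and correspondingly $\K(\ell^{2}(\N))\otimes\ker q_{\delta}$ is an ideal of $\K(\ell^{2}(\N))\otimes(A\times_{\delta}^{\piso}\N)$. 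Using the isomorphism $\Psi_{\delta}:\I_{\delta}\to Q_{\delta}\,\K\big(\ell^{2}(\N)\otimes(A\times_{\delta}^{\piso}\N)\big)\,Q_{\delta}$ of Proposition~\ref{idnfy-ideals}, the spanning description \eqref{Id.Ig-span}, and the identity (a direct consequence of the covariance relations of $(j_{A},v)$ for $(A,\N,\delta)$)
\[
v_{n}^{*}j_{A}(a)v_{y}-v_{n+1}^{*}j_{A}(\delta(a))v_{y+1}=v_{n}^{*}j_{A}(a)(1-v_{1}^{*}v_{1})v_{y}\in\ker q_{\delta},
\]
one reads off from \eqref{psi-d} that $\Psi_{\delta}$ carries the spanning element $\xi_{(m,n)}^{(x,y)}(a)-\xi_{(m,n+1)}^{(x,y+1)}(\delta(a))$ of $\I_{\delta}\cap\I_{\gamma}$ to $Q_{\delta}\,\Pi\big((e_{m}\otimes\overline{e_{x}})\otimes v_{n}^{*}j_{A}(a)(1-v_{1}^{*}v_{1})v_{y}\big)\,Q_{\delta}$; since these elements span $Q_{\delta}\big(\K(\ell^{2}(\N))\otimes\ker q_{\delta}\big)Q_{\delta}$, we get $\Psi_{\delta}(\I_{\delta}\cap\I_{\gamma})=Q_{\delta}\big(\K(\ell^{2}(\N))\otimes\ker q_{\delta}\big)Q_{\delta}$. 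Passing to quotients, and using that $\K(\ell^{2}(\N))\otimes(-)$ is exact so that $\big(\K(\ell^{2}(\N))\otimes(A\times_{\delta}^{\piso}\N)\big)/\big(\K(\ell^{2}(\N))\otimes\ker q_{\delta}\big)\simeq\K(\ell^{2}(\N))\otimes(A\times_{\delta}^{\iso}\N)$, together with the fact that a surjective $*$-homomorphism carries a full corner to a full corner, one obtains
\[
\A_{\delta}=\I_{\delta}/(\I_{\delta}\cap\I_{\gamma})\;\simeq\;\overline{Q}_{\delta}\big(\K(\ell^{2}(\N))\otimes(A\times_{\delta}^{\iso}\N)\big)\overline{Q}_{\delta},
\]
a full corner, where $\overline{Q}_{\delta}$ is the image of $Q_{\delta}$ in the multiplier algebra of the quotient. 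The symmetric statement for $\A_{\gamma}$ then completes the proof.

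The step I expect to be the main obstacle is the identification $\Psi_{\delta}(\I_{\delta}\cap\I_{\gamma})=Q_{\delta}\big(\K(\ell^{2}(\N))\otimes\ker q_{\delta}\big)Q_{\delta}$ in the last paragraph: one must check, using the covariance relations of $(i_{A},i_{\N^{2}})$ and the explicit formula \eqref{psi-d} for $\Psi_{\delta}$, that the difference $\xi_{(m,n)}^{(x,y)}(a)-\xi_{(m,n+1)}^{(x,y+1)}(\delta(a))$ collapses in the fibre exactly to the stated element of $\ker q_{\delta}$, and that $Q_{\delta}$ descends to a well-defined projection in the multiplier algebra of the quotient so that the corner identification survives. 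The remaining steps are routine manipulations with the results assembled in Section~\ref{sec:composition}.
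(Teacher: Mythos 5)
Your proposal is correct and follows essentially the same route as the paper's proof: $L_{1}=\I_{\delta}\cap\I_{\gamma}$, $L_{2}=\ker q=\I_{\delta}+\I_{\gamma}$, transport through $\Psi$ and $\Psi_{\delta}$, the second-isomorphism-theorem splitting of $L_{2}/L_{1}$, and the quotient of the corner $Q_{\delta}\K(\ell^{2}(\N)\otimes(A\times_{\delta}^{\piso}\N))Q_{\delta}$ by $Q_{\delta}\K(\ell^{2}(\N)\otimes\ker q_{\delta})Q_{\delta}$. The only differences are cosmetic shortcuts (the general facts that $pIp\cap pJp=p(I\cap J)p$ and that fullness of $pAp$ in $A$ forces fullness of $pIp$ in every ideal $I$, in place of the paper's explicit computation of the projections $Q$ and $R$ on $\ell^{2}(\N^{2})\otimes A$), and the fibrewise collapse you flag as the main obstacle is exactly the computation the paper carries out.
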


\begin{proof}
The composition series (\ref{main-compose}) is
$$0\leq \I_{\delta}\cap \I_{\gamma}\leq \ker q \leq A\times_{\alpha}^{\piso} \N^{2}.$$
So, (iii) is indeed true (see the exact sequence (\ref{exseq1})).

To see (i), first note that we have
\begin{eqnarray*}
\begin{array}{rcl}
\I_{\delta}\cap \I_{\gamma}\simeq \Psi(\I_{\delta}\cap \I_{\gamma})&=&\Psi(\I_{\delta}) \cap \Psi(\I_{\gamma})\\
&=&[p(\J_{\delta}\rtimes_{\beta} \Z^{2})p]\cap [p(\J_{\gamma}\rtimes_{\beta} \Z^{2})p]\\
&=&[p(\J_{\delta}\rtimes_{\beta} \Z^{2})p][p(\J_{\gamma}\rtimes_{\beta} \Z^{2})p].
\end{array}
\end{eqnarray*}
Then, it follows by inspection on spanning elements that (see also Theorem \ref{J crossed Z2})
$$[p(\J_{\delta}\rtimes_{\beta} \Z^{2})p][p(\J_{\gamma}\rtimes_{\beta} \Z^{2})p]=p[(C_{0}(\Z^{2})\otimes A)\rtimes_{\beta} \Z^{2}]p,$$
and therefore,
$$\I_{\delta}\cap \I_{\gamma}\stackrel{\Psi}\simeq p[(C_{0}(\Z^{2})\otimes A)\rtimes_{\beta} \Z^{2}]p.$$
Now, if $(\rho,U)$ is the covariant representation of the system $(\B,\Z^{2},\beta)$ in $\L(\ell^{2}(\Z^{2})\otimes A)$
mentioned in section \ref{sec:pre}, the restriction of the corresponding (non-degenerate) representation
$\rho\times U$ of $\B\rtimes_{\beta} \Z^{2}$ to $(C_{0}(\Z^{2})\otimes A)\rtimes_{\beta} \Z^{2}$ is the canonical isomorphism
of $(C_{0}(\Z^{2})\otimes A)\rtimes_{\beta} \Z^{2}$ onto the algebra
$\K(\ell^{2}(\Z^{2})\otimes A)\simeq \K(\ell^{2}(\Z^{2}))\otimes A$ of compact operators. So, we have
\begin{align}
\label{eq46}
\I_{\delta}\cap \I_{\gamma}\stackrel{\Psi}\simeq p[(C_{0}(\Z^{2})\otimes A)\rtimes_{\beta} \Z^{2}]p
\stackrel{(\rho\times U)}\simeq R\K(\ell^{2}(\Z^{2})\otimes A)R,
\end{align}
where $R=\overline{\rho\times U}(p)$ is a projection in
$\M\big(\K\big(\ell^{2}(\Z^{2})\otimes A\big)\big)\simeq\L\big(\ell^{2}(\Z^{2})\otimes A\big)$. By taking an approximate unit
$\{a_{\lambda}\}$ in $A$ and using the equation
$$(\rho\times U)(j_{\B}(\phi_{(0,0)}(a_{\lambda})))=\rho(\phi_{(0,0)}(a_{\lambda})),$$
one can see that $R=\overline{\rho}(\overline{\phi}_{(0,0)}(1))$. Moreover, calculation on spanning elements of
$\ell^{2}(\Z^{2})\otimes A$ shows that we have
\[
(Rf)(m,n)=
   \begin{cases}
      \overline{\alpha}_{(m,n)}(1)f(m,n) &\textrm{if}\empty\ \text{$(m,n)\in \N^{2}$,}\\
      0 &\textrm{otherwise}
   \end{cases}
\]
for all $f\in (\ell^{2}(\Z^{2})\otimes A)$. Each $Rf$ is obviously in $\ell^{2}(\N^{2})\otimes A$. Now, define a map
$Q:\ell^{2}(\N^{2})\otimes A\rightarrow \ell^{2}(\N^{2})\otimes A$
by
$$(Qh)(m,n)=\overline{\alpha}_{(m,n)}(1)h(m,n)\ \ \textrm{for all}\ h\in(\ell^{2}(\N^{2})\otimes A).$$
One can see that $Q$ is a projection in
$\M\big(\K\big(\ell^{2}(\N^{2})\otimes A\big)\big)\simeq \L\big(\ell^{2}(\N^{2})\otimes A\big)$.
Then, by a similar calculation done in Proposition \ref{idnfy-ideals}, we get
$$R\K(\ell^{2}(\Z^{2})\otimes A)R=Q\K(\ell^{2}(\N^{2})\otimes A)Q,$$
and consequently,
$$\I_{\delta}\cap \I_{\gamma}\stackrel{\Psi}\simeq p[(C_{0}(\Z^{2})\otimes A)\rtimes_{\beta} \Z^{2}]p
\stackrel{(\rho\times U)}\simeq R\K(\ell^{2}(\Z^{2})\otimes A)R=Q\K(\ell^{2}(\N^{2})\otimes A)Q.$$
So, the composition $(\rho\times U)\circ\Psi$ gives an
isomorphism $$\phi:\I_{\delta}\cap \I_{\gamma}\rightarrow Q\K(\ell^{2}(\N^{2})\otimes A)Q$$
such that
$$\phi\bigg(\xi_{(m,n)}^{(x,y)}(ab^{*})-\xi_{(m,n+1)}^{(x,y+1)}(\delta(ab^{*}))\bigg)=
Q\big(\Theta_{[e_{(m,n)}\otimes a],[e_{(x,y)}\otimes b]}\big)Q$$
for all $a,b\in A$ and $m,n,x,y\in \N$. To see that $L_{1}=\I_{\delta}\cap \I_{\gamma}$ is a full corner, first note that
$\K(\ell^{2}(\N^{2})\otimes A)$ is spanned by the elements
of the form $\Theta_{[e_{(m,n)}\otimes ab],[e_{(r,s)}\otimes c]}$. Now, if $\{a_{\lambda}\}$ is any approximate unit in $A$, then
\begin{align}
\label{eq27}
\Theta_{[e_{(m,n)}\otimes aa_{\lambda}b],[e_{(r,s)}\otimes c]}\rightarrow \Theta_{[e_{(m,n)}\otimes ab],[e_{(r,s)}\otimes c]}
\end{align}
in the norm topology. But, calculation shows that
\begin{eqnarray}\label{eq28}
\begin{array}{rcl}
\Theta_{[e_{(m,n)}\otimes aa_{\lambda}b],[e_{(r,s)}\otimes c]}&=&
\big(\Theta_{[e_{(m,n)}\otimes a],[e_{(0,0)}\otimes a_{\lambda}]}\big) \big(\Theta_{[e_{(0,0)}\otimes b],[e_{(r,s)}\otimes c]}\big)\\
&=&\big(\Theta_{[e_{(m,n)}\otimes a],[e_{(0,0)}\otimes a_{\lambda}]}\big) Q \big(\Theta_{[e_{(0,0)}\otimes b],[e_{(r,s)}\otimes c]}\big).
\end{array}
\end{eqnarray}
Thus, it follows from (\ref{eq27}) and (\ref{eq28}) that
$$\Theta_{[e_{(m,n)}\otimes ab],[e_{(r,s)}\otimes c]}\in \overline{\K(\ell^{2}(\N^{2})\otimes A)Q\K(\ell^{2}(\N^{2})\otimes A)},$$
and hence, we have
$$\overline{\K(\ell^{2}(\N^{2})\otimes A)Q\K(\ell^{2}(\N^{2})\otimes A)}=\K(\ell^{2}(\N^{2})\otimes A).$$

To see (ii), firstly,
\begin{align}
\label{L2/L1}
L_{2}/L_{1}=(\I_{\delta}+\I_{\gamma})/(\I_{\delta}\cap \I_{\gamma})
\simeq\I_{\delta}/(\I_{\delta}\cap \I_{\gamma}) \oplus \I_{\gamma}/(\I_{\delta}\cap \I_{\gamma}).
\end{align}

Next, we show that the quotients in the above direct sum are isomorphic to full corners in algebras
$\K(\ell^{2}(\N)) \otimes (A\times_{\delta}^{\iso} \N)$ and $\K(\ell^{2}(\N)) \otimes (A\times_{\gamma}^{\iso} \N)$, respectively.
We only do this for
\begin{align}
\label{d-quotient}
\I_{\delta}/(\I_{\delta}\cap \I_{\gamma})
\end{align}
as the other one follows similarly. Firstly, by Proposition \ref{idnfy-ideals},
$$\I_{\delta}\stackrel{\Psi_{\delta}}\simeq Q_{\delta}\K\big(\ell^{2}(\N)\otimes (A\times_{\delta}^{\piso} \N)\big)Q_{\delta}.$$
Let $I$ be the kernel of the natural surjective homomorphism $\varphi$ of $(A\times_{\delta}^{\piso} \N, j_{A}, v)$ onto
the isometric crossed product $(A\times_{\delta}^{\iso} \N, k_{A}, u)$ of the system $(A,\N,\delta)$, where
$$\varphi(v_{m}^{*}j_{A}(a)v_{n})=u_{m}^{*}k_{A}(a)u_{n}\ \ \ \textrm{for all}\ a\in A,\ m,n\in \N.$$
See in \cite{AZ} that $I$ is spanned by the elements $\{v_{m}^{*}j_{A}(a)(1-v^{*}v)v_{n}:a\in A, m,n\in\N\}$, which is an essential
ideal of $A\times_{\delta}^{\piso} \N$. Moreover, it is a full corner in algebra $\K(\ell^{2}(\N))\otimes A$.
Now, calculation on the spanning elements of the ideal $\I_{\delta}\cap \I_{\gamma}$ shows that
\begin{eqnarray*}
\begin{array}{rcl}
\Psi_{\delta}\bigg(\xi_{(m,n)}^{(x,y)}(ab^{*})-\xi_{(m,n+1)}^{(x,y+1)}(\delta(ab^{*}))\bigg)
&=&Q_{\delta}\Pi\big((e_{m}\otimes \overline{e_{x}})\otimes v_{n}^{*}j_{A}(ab^{*})(1-v^{*}v)v_{y}\big)Q_{\delta}\\
&=&Q_{\delta}\big(\Theta_{[e_{m}\otimes v_{n}^{*}j_{A}(a)(1-v^{*}v)],[e_{x}\otimes v_{y}^{*}j_{A}(b)(1-v^{*}v)]}\big)Q_{\delta},
\end{array}
\end{eqnarray*}
which implies that
$$\I_{\delta}\cap \I_{\gamma}\simeq \Psi_{\delta}(\I_{\delta}\cap \I_{\gamma})=Q_{\delta}\K(\ell^{2}(\N)\otimes I)Q_{\delta}.$$
Then, consider the following diagram
\begin{equation*}
\begin{diagram}\dgARROWLENGTH=0.4\dgARROWLENGTH
\node{\K\big(\ell^{2}(\N)\otimes (A\times_{\delta}^{\piso} \N)\big)} \arrow{s,l}{}\arrow{e}
\arrow{s}\arrow{e}\node{\K\big(\ell^{2}(\N)\otimes (A\times_{\delta}^{\iso} \N)\big)}\\
\node{\K(\ell^{2}(\N))\otimes (A\times_{\delta}^{\piso} \N)} \arrow{e,t}{\id\otimes \varphi}
\node{\K(\ell^{2}(\N))\otimes (A\times_{\delta}^{\iso} \N),} \arrow{n,r}{}
\end{diagram}
\end{equation*}
where the vertical arrows denote the canonical isomorphisms. It induces a surjective homomorphism
$$\widetilde{\varphi}:\K\big(\ell^{2}(\N)\otimes (A\times_{\delta}^{\piso} \N)\big)\rightarrow
\K\big(\ell^{2}(\N)\otimes (A\times_{\delta}^{\iso} \N)\big)$$
such that
$$\widetilde{\varphi}(\Theta_{e_{m}\otimes \xi, e_{n}\otimes \eta})=\Theta_{e_{m}\otimes \varphi(\xi), e_{n}\otimes \varphi(\eta)}$$
for all $\xi,\eta\in A\times_{\delta}^{\piso} \N$ and $m,n\in \N$. One can see that indeed
$\ker \widetilde{\varphi}=\K(\ell^{2}(\N)\otimes I)$.
Now, the restriction of $\widetilde{\varphi}$ to the (full) corner
$Q_{\delta}\K\big(\ell^{2}(\N)\otimes (A\times_{\delta}^{\piso} \N)\big)Q_{\delta}$ gives a
surjective homomorphism $\widetilde{\varphi}|$ of it onto the subalgebra
\begin{align}
\label{eq38}
\widetilde{\varphi}|\big(Q_{\delta}\K\big(\ell^{2}(\N)\otimes (A\times_{\delta}^{\piso} \N)\big)Q_{\delta}\big)
=R_{\delta} \K\big(\ell^{2}(\N)\otimes (A\times_{\delta}^{\iso} \N)\big) R_{\delta}
\end{align}
of $\K\big(\ell^{2}(\N)\otimes (A\times_{\delta}^{\iso} \N)\big)$, where $R_{\delta}=\overline{\widetilde{\varphi}}(Q_{\delta})$
is a projection in
$$\M\big(\K\big(\ell^{2}(\N)\otimes (A\times_{\delta}^{\iso} \N)\big)\big)\simeq\L\big(\ell^{2}(\N)\otimes (A\times_{\delta}^{\iso} \N)\big).$$
By calculating on the spanning elements of $\ell^{2}(\N)\otimes (A\times_{\delta}^{\iso} \N)$, it follows that
$$(R_{\delta}f)(m)=\overline{k_{A}}(\overline{\gamma}_{m}(1))f(m)\ \ \textrm{for all}\ f\in \ell^{2}(\N) \otimes (A\times_{\delta}^{\iso} \N).$$
Let us denote the corner $R_{\delta} \K\big(\ell^{2}(\N)\otimes (A\times_{\delta}^{\iso} \N)\big) R_{\delta}$ by $\A_{\delta}$.
Since $\I_{\delta}$ is a full corner in $\K\big(\ell^{2}(\N)\otimes (A\times_{\delta}^{\piso} \N)\big)$, one can apply the
surjection $\widetilde{\varphi}$ to see that $\A_{\delta}$ is actually a full corner in
$\K\big(\ell^{2}(\N)\otimes (A\times_{\delta}^{\iso} \N)\big)$. Now, it follows from
\begin{eqnarray*}
\begin{array}{rcl}
\ker \widetilde{\varphi}|
&=&\ker \widetilde{\varphi}\cap \big[Q_{\delta}\K\big(\ell^{2}(\N)\otimes (A\times_{\delta}^{\piso} \N)\big)Q_{\delta}\big]\\
&=&\K(\ell^{2}(\N)\otimes I)\cap \big[Q_{\delta}\K\big(\ell^{2}(\N)\otimes (A\times_{\delta}^{\piso} \N)\big)Q_{\delta}\big]
\end{array}
\end{eqnarray*}
that
$\ker \widetilde{\varphi}|=Q_{\delta}\K(\ell^{2}(\N)\otimes I)Q_{\delta}$.
Consequently, we get
\begin{eqnarray}
\label{Id/QKQ}
\begin{array}{rcl}
\I_{\delta}/(\I_{\delta}\cap \I_{\gamma})&\simeq&
[Q_{\delta}\K\big(\ell^{2}(\N)\otimes (A\times_{\delta}^{\piso} \N)\big)Q_{\delta}]/[Q_{\delta}\K(\ell^{2}(\N)\otimes I)Q_{\delta}]\\
&\simeq& R_{\delta}\K\big(\ell^{2}(\N)\otimes (A\times_{\delta}^{\iso} \N)\big)R_{\delta}=\A_{\delta}.
\end{array}
\end{eqnarray}
Similarly, there is a surjective homomorphism of $Q_{\gamma}\K\big(\ell^{2}(\N)\otimes (A\times_{\gamma}^{\piso} \N)\big)Q_{\gamma}$
onto the full corner $\A_{\gamma}=R_{\gamma}\K\big(\ell^{2}(\N)\otimes (A\times_{\gamma}^{\iso} \N)\big)R_{\gamma}$
whose kernel is
$$Q_{\gamma}\K(\ell^{2}(\N)\otimes J)Q_{\gamma}=\Psi_{\gamma}(\I_{\delta}\cap \I_{\gamma})\simeq \I_{\delta}\cap \I_{\gamma},$$
where $R_{\gamma}$ is a projection in
$\M\big(\K\big(\ell^{2}(\N)\otimes (A\times_{\gamma}^{\iso} \N)\big)\big)=\L(\ell^{2}(\N)\otimes (A\times_{\gamma}^{\iso} \N))$,
and $J$ is the essential of $A\times_{\gamma}^{\piso} \N$ such that $(A\times_{\gamma}^{\piso} \N)/J\simeq A\times_{\gamma}^{\iso} \N$
(see again \cite{AZ}). Therefore,
\begin{eqnarray}
\label{Ig/QKQ}
\begin{array}{rcl}
\I_{\gamma}/(\I_{\delta}\cap \I_{\gamma})&\simeq&
[Q_{\gamma}\K\big(\ell^{2}(\N)\otimes (A\times_{\gamma}^{\piso} \N)\big)Q_{\gamma}]/[Q_{\gamma}\K(\ell^{2}(\N)\otimes J)Q_{\gamma}]\\
&\simeq& R_{\gamma}\K\big(\ell^{2}(\N)\otimes (A\times_{\gamma}^{\iso} \N)\big)R_{\gamma}=\A_{\gamma}.
\end{array}
\end{eqnarray}
Consequently, it follows by (\ref{Id/QKQ}) and (\ref{Ig/QKQ}) that (see (\ref{L2/L1}))
$$L_{2}/L_{1}\simeq\I_{\delta}/(\I_{\delta}\cap \I_{\gamma}) \oplus \I_{\gamma}/(\I_{\delta}\cap \I_{\gamma})
\simeq \A_{\delta} \oplus \A_{\gamma}.$$
This completes the proof.

\end{proof}

\begin{remark}
\label{auto for N2}
If in the system $(A,\N^{2},\alpha)$ the action $\alpha$ on $A$ is given by automorphisms, then since
$A\times_{\alpha}^{\iso} \N^{2}\simeq A\rtimes_{\alpha} \Z^{2}$, the short exact sequence (\ref{exseq1}) is
\begin{align}
\label{exseq3}
0 \longrightarrow \ker q \stackrel{}{\longrightarrow} A\times_{\alpha}^{\piso} \N^{2}
\stackrel{q}{\longrightarrow} A\rtimes_{\alpha} \Z^{2} \longrightarrow 0.
\end{align}
Moreover, since the systems $(A,\N,\delta)$ and $(A,\N,\gamma)$ are obviously given by automorphic actions,
the algebras $D$ and $C$ are isomorphic to $B_{\Z}\otimes A$, where $B_{\Z}$ is the subalgebra of $\ell^{\infty}(\Z,A)$
generated by the characteristic functions $\{1_{n}: n\in \Z\}$ (see \cite[Proposition 5.1]{SZ}). Therefore, by
\cite[Corollary 5.2]{SZ} or \cite[Corollary 5.3]{AZ}, the algebras $A\times_{\delta}^{\piso} \N$ and $A\times_{\gamma}^{\piso} \N$
are full corners in the group crossed products
$$(B_{\Z}\otimes A)\rtimes_{\lt\otimes\delta^{-1}} \Z\ \ \textrm{and}\ \ (B_{\Z}\otimes A)\rtimes_{\lt\otimes\gamma^{-1}} \Z,$$
respectively.
\end{remark}

\begin{cor}
\label{idfy-idl-auto}
Let $(A,\N^{2},\alpha)$ be a system in which the action $\alpha$ on $A$ is given by automorphisms. Then, the (essential) ideals
$\I_{\delta}$ and $\I_{\gamma}$ of $A\times_{\alpha}^{\piso} \N^{2}$ are isomorphic to the algebras
$\K(\ell^{2}(\N)) \otimes (A\times_{\delta}^{\piso} \N)$ and $\K(\ell^{2}(\N)) \otimes (A\times_{\gamma}^{\piso} \N)$
of compact operators, respectively.
\end{cor}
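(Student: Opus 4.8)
The plan is to derive this immediately from Proposition \ref{idnfy-ideals} by observing that, under the automorphy hypothesis, the corner projections $Q_{\delta}$ and $Q_{\gamma}$ appearing there collapse to the identity multipliers. First I would note that if $\alpha$ is given by automorphisms of $A$, then the induced actions $\delta$ and $\gamma$ of $\N$ are automorphic as well: each $\gamma_{m}=\alpha_{(m,0)}$ and each $\delta_{n}=\alpha_{(0,n)}$ is an automorphism of $A$, hence automatically extendible, and its strictly continuous extension to $\M(A)$ is the (unital) automorphism $\overline{\gamma}_{m}$, respectively $\overline{\delta}_{n}$. In particular $\overline{\gamma}_{m}(1)=1$ and $\overline{\delta}_{n}(1)=1$ in $\M(A)$ for all $m,n\in\N$.

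Next I would recall from Proposition \ref{idnfy-ideals} the isomorphism $\Psi_{\delta}$ of $\I_{\delta}$ onto the full corner $Q_{\delta}\K\big(\ell^{2}(\N)\otimes(A\times_{\delta}^{\piso}\N)\big)Q_{\delta}$, where $Q_{\delta}$ is the projection in $\L\big(\ell^{2}(\N)\otimes(A\times_{\delta}^{\piso}\N)\big)$ given by $(Q_{\delta}f)(m)=\overline{j_{A}}(\overline{\gamma}_{m}(1))f(m)$. Since $\overline{\gamma}_{m}(1)=1$ for every $m$, the formula becomes $(Q_{\delta}f)(m)=f(m)$, so $Q_{\delta}$ is the unit of $\M\big(\K\big(\ell^{2}(\N)\otimes(A\times_{\delta}^{\piso}\N)\big)\big)$, whence the full corner is the entire algebra. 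Composing $\Psi_{\delta}$ with the canonical isomorphism $\K\big(\ell^{2}(\N)\otimes(A\times_{\delta}^{\piso}\N)\big)\simeq\K(\ell^{2}(\N))\otimes(A\times_{\delta}^{\piso}\N)$ then gives $\I_{\delta}\simeq\K(\ell^{2}(\N))\otimes(A\times_{\delta}^{\piso}\N)$. Running the same argument with the projection $Q_{\gamma}$ of Proposition \ref{idnfy-ideals}, for which $(Q_{\gamma}h)(n)=\overline{\iota_{A}}(\overline{\delta}_{n}(1))h(n)=h(n)$, yields $\I_{\gamma}\simeq\K(\ell^{2}(\N))\otimes(A\times_{\gamma}^{\piso}\N)$.

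I do not anticipate a genuine obstacle: the only content is the standard fact that an automorphism of a (possibly non-unital) $C^{*}$-algebra extends to a unital strictly continuous automorphism of its multiplier algebra, which is precisely where the automorphy hypothesis is used; everything else is a direct specialization of the corner picture already established in Proposition \ref{idnfy-ideals}. One could alternatively observe that in the automorphic case the algebras $D$ and $C$ are of tensor-product form (as recalled in Remark \ref{auto for N2}), but invoking the vanishing of $Q_{\delta}$ and $Q_{\gamma}$ is the shortest route.
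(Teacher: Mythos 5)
Your proposal is correct and follows exactly the paper's own argument: the proof given there is precisely that, since each $\delta_{n}$ and $\gamma_{n}$ is an automorphism, the projections $Q_{\delta}$ and $Q_{\gamma}$ of Proposition \ref{idnfy-ideals} become identity operators, so the full corners are the whole algebras. You have merely spelled out the (correct) justification that $\overline{\gamma}_{m}(1)=1$ and $\overline{\delta}_{n}(1)=1$ in $\M(A)$, which the paper leaves implicit.
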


\begin{proof}
This is due to the fact that, since each $\delta_{n}$ as well as each $\gamma_{n}$ is an automorphism,
the projections $Q_{\delta}$ and $Q_{\gamma}$ in Proposition \ref{idnfy-ideals} become just identity operators.
\end{proof}

\begin{cor}
\label{main-Cor}
Let $(A,\N^{2},\alpha)$ be a system in which the action $\alpha$ on $A$ is given by automorphisms. Then, there is
a composition series
\begin{align}
\label{compose-auto}
0\leq L_{1}\leq L_{2} \leq A\times_{\alpha}^{\piso} \N^{2}
\end{align}
of essential ideals, such that:
\begin{itemize}
\item[(i)] $L_{1}\simeq \K(\ell^{2}(\N^{2}))\otimes A$,
\item[(ii)] $L_{2}/L_{1}\simeq \big[\K(\ell^{2}(\N)) \otimes (A\rtimes_{\delta} \Z)\big]
\oplus \big[\K(\ell^{2}(\N)) \otimes (A\rtimes_{\gamma} \Z)\big]$, and
\item[(iii)] $(A\times_{\alpha}^{\piso} \N^{2})/L_{2}\simeq A\rtimes_{\alpha} \Z^{2}$.
\end{itemize}
\end{cor}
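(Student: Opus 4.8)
The plan is to obtain the statement as a direct specialization of Theorem \ref{main-TH}, using the fact that for an automorphic action all the projections cutting down the ambient algebras of compact operators degenerate to identities. The composition series (\ref{compose-auto}) is again
$$0\leq \I_{\delta}\cap \I_{\gamma}\leq \ker q\leq A\times_{\alpha}^{\piso}\N^{2},$$
so $L_{1}=\I_{\delta}\cap \I_{\gamma}$ and $L_{2}=\ker q$, and I will treat (i), (ii), (iii) by reading off the corresponding parts of Theorem \ref{main-TH}. Part (iii) is immediate: Theorem \ref{main-TH}(iii) gives $(A\times_{\alpha}^{\piso}\N^{2})/L_{2}\simeq A\times_{\alpha}^{\iso}\N^{2}$, and when $\alpha$ is given by automorphisms the isometric crossed product coincides with the group crossed product, $A\times_{\alpha}^{\iso}\N^{2}\simeq A\rtimes_{\alpha}\Z^{2}$ (see Remark \ref{auto for N2}).

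For (i), recall from the proof of Theorem \ref{main-TH}(i) that $L_{1}$ is isomorphic to the full corner $Q\K(\ell^{2}(\N^{2})\otimes A)Q$, where $Q$ is the projection determined by $(Qh)(m,n)=\overline{\alpha}_{(m,n)}(1)h(m,n)$. When each $\alpha_{(m,n)}$ is an automorphism of $A$, its strictly continuous extension $\overline{\alpha}_{(m,n)}$ is an automorphism of $\M(A)$ and hence unital, so $\overline{\alpha}_{(m,n)}(1)=1$ and $Q$ is the identity operator. Thus $L_{1}\simeq \K(\ell^{2}(\N^{2})\otimes A)\simeq \K(\ell^{2}(\N^{2}))\otimes A$.

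For (ii), Theorem \ref{main-TH}(ii) gives $L_{2}/L_{1}\simeq \A_{\delta}\oplus \A_{\gamma}$, and from its proof $\A_{\delta}=R_{\delta}\K(\ell^{2}(\N)\otimes (A\times_{\delta}^{\iso}\N))R_{\delta}$ with $(R_{\delta}f)(m)=\overline{k_{A}}(\overline{\gamma}_{m}(1))f(m)$, and similarly for $\A_{\gamma}$. Since each $\gamma_{m}$ (and each $\delta_{m}$) is an automorphism, again $\overline{\gamma}_{m}(1)=1$, so $R_{\delta}$ is the identity and $\A_{\delta}\simeq \K(\ell^{2}(\N)\otimes (A\times_{\delta}^{\iso}\N))\simeq \K(\ell^{2}(\N))\otimes (A\times_{\delta}^{\iso}\N)$; likewise $\A_{\gamma}\simeq \K(\ell^{2}(\N))\otimes (A\times_{\gamma}^{\iso}\N)$. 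Finally, since $\delta$ and $\gamma$ are automorphisms, the isometry implementing each of them in the corresponding isometric crossed product is forced to be a unitary, whence $A\times_{\delta}^{\iso}\N\simeq A\rtimes_{\delta}\Z$ and $A\times_{\gamma}^{\iso}\N\simeq A\rtimes_{\gamma}\Z$ (see Remark \ref{auto for N2}). Combining these identifications gives (ii). Alternatively, one may start from Corollary \ref{idfy-idl-auto}, which already records $\I_{\delta}\simeq \K(\ell^{2}(\N))\otimes (A\times_{\delta}^{\piso}\N)$ and $\I_{\gamma}\simeq \K(\ell^{2}(\N))\otimes (A\times_{\gamma}^{\piso}\N)$, and then pass to the quotients $\I_{\delta}/(\I_{\delta}\cap\I_{\gamma})$ and $\I_{\gamma}/(\I_{\delta}\cap\I_{\gamma})$ exactly as in (\ref{Id/QKQ}).

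There is no genuine obstacle here: the whole content is the observation that an automorphism of $A$ extends to a \emph{unital} automorphism of $\M(A)$, which forces the projections $Q$, $R_{\delta}$, $R_{\gamma}$ to be identities and collapses the full corners of Theorem \ref{main-TH} to honest tensor products. The only mild care needed is to check that the shapes of these projections are exactly as recorded in the proofs of Proposition \ref{idnfy-ideals} and Theorem \ref{main-TH}, where the substantive work has already been done.
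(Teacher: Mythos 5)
Your proposal is correct and follows essentially the same route as the paper: the paper's own proof of Corollary \ref{main-Cor} simply observes that the projections $Q$, $R_{\delta}$, $R_{\gamma}$ from Theorem \ref{main-TH} become identity operators when the action is automorphic, and that $A\times_{\alpha}^{\iso}\N^{2}\simeq A\rtimes_{\alpha}\Z^{2}$, $A\times_{\delta}^{\iso}\N\simeq A\rtimes_{\delta}\Z$, and $A\times_{\gamma}^{\iso}\N\simeq A\rtimes_{\gamma}\Z$. Your additional justification that $\overline{\alpha}_{(m,n)}(1)=1$ because an automorphism extends to a unital automorphism of $\M(A)$ is exactly the reason the paper leaves implicit.
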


\begin{proof}
Since each $\alpha_{(m,n)}$ is an automorphism, the projections $Q$, $R_{\delta}$, and $R_{\gamma}$ in Theorem \ref{main-TH}
are just identity operators. Moreover, we have
$A\times_{\alpha}^{\iso} \N^{2}\simeq A\rtimes_{\alpha} \Z^{2}$, $A\times_{\delta}^{\iso} \N\simeq A\rtimes_{\delta} \Z$, and
$A\times_{\gamma}^{\iso} \N\simeq A\rtimes_{\gamma} \Z$. So, the rest follows from Theorem \ref{main-TH}.
\end{proof}

\begin{remark}
\label{ext-seq4}
Note that, for the trivial system $(\C,\N^{2},\id)$, the short exact sequence (\ref{exseq3}), is just the well-known exact sequence
\begin{align}
\label{Murphy-exseq3}
0 \longrightarrow \mathcal{C}_{\Z^{2}} \stackrel{}{\longrightarrow} \T(\Z^{2}) \stackrel{}{\longrightarrow} C(\TT^{2}) \longrightarrow 0,
\end{align}
where $\mathcal{C}_{\Z^{2}}$ is the commutator ideal of the Toeplitz algebra $\T(\Z^{2})\simeq \T(\Z)\otimes \T(\Z)$
(see also the remark prior to \cite[Corollary 5.5]{SZ2}). Moreover, the essential ideals $\I_{\delta}$ and $\I_{\gamma}$ of
$\C\times_{\id}^{\piso} \N^{2}\simeq \T(\Z^{2})$ are both isomorphic to the algebra
$$\K(\ell^{2}(\N)) \otimes (\C\times_{\id}^{\piso} \N)\simeq\K(\ell^{2}(\N)) \otimes \T(\Z),$$
where $\C\times_{\id}^{\piso} \N \simeq \T(\Z)$ is known by \cite[Example 4.3]{AZ} (see also the remark prior to \cite[Lemma 5.4]{SZ2}).
Also, in this case, we have
$$L_{1}=\I_{\delta}\cap \I_{\gamma}\simeq \K(\ell^{2}(\N^{2}))\simeq \K(\ell^{2}(\N))\otimes \K(\ell^{2}(\N)),$$
and since $\C\rtimes_{\id} \Z\simeq C^{*}(\Z)\simeq C(\widehat{\Z})\simeq C(\TT),$
\begin{eqnarray*}
\begin{array}{rcl}
L_{2}/L_{1}\simeq \mathcal{C}_{\Z^{2}}/\K(\ell^{2}(\N^{2}))
&\simeq&[\K(\ell^{2}(\N)) \otimes (\C\rtimes_{\id} \Z)]\oplus [\K(\ell^{2}(\N)) \otimes (\C\rtimes_{\id} \Z)]\\
&\simeq&[\K(\ell^{2}(\N)) \otimes C(\TT)]\oplus [\K(\ell^{2}(\N)) \otimes C(\TT)].
\end{array}
\end{eqnarray*}

\end{remark}

\subsection*{Acknowledgements}
This work (Grant No. RGNS 64-102) was financially supported by Office of the Permanent Secretary, Ministry of Higher Education, Science,
Research and Innovation.

\end{document}